\numberwithin{equation}{section}
\numberwithin{equation}{section}
\theoremstyle{plain}
\newtheorem{theorem}{Theorem}[section]
\newtheorem{lemma}[theorem]{Lemma}
\newtheorem{corollary}[theorem]{Corollary}
\newtheorem{claim}[theorem]{Claim}
\theoremstyle{definition}
\newtheorem{definition}[theorem]{Definition}
\theoremstyle{remark}
\newtheorem{remark}[theorem]{\bf{Remark}}
\def\DataSig{DataS{\i}g }
\newcommand{\e}{\ensuremath{{\cal E}}}
\newcommand{\cd}{\ensuremath{{\cal D}}}
\newcommand{\cl}{\ensuremath{{\cal L}}}
\newcommand{\cH}{\ensuremath{{\cal H}}}
\newcommand{\al}{\alpha}
\newcommand{\de}{\delta}
\renewcommand{\th}{\theta}
\newcommand{\vph}{\varphi}
\newcommand{\ep}{\varepsilon}
\newcommand{\R}{\ensuremath{{\mathbb R}}}
\newcommand{\B}{\ensuremath{{\mathbb B}}}
\newcommand{\Z}{\ensuremath{{\mathbb Z}}}
\newcommand{\ovB}{\ensuremath{{ \overline{ \mathbb B }}}}
\newcommand{\bA}{{\bf A }}
\newcommand{\bB}{{\bf B}}
\newcommand{\bC}{{\bf C}}
\newcommand{\bD}{{\bf D}}
\newcommand{\bE}{{\bf E}}
\newcommand{\bI}{{\bf I}}
\newcommand{\bII}{{\bf II}}
\newcommand{\bIII}{{\bf III}}
\newcommand{\bIV}{{\bf IV}}
\newcommand{\bV}{{\bf V}}
\DeclareMathOperator{\inj}{inj}
\DeclareMathOperator{\proj}{proj}
\DeclareMathOperator{\dist}{dist}
\DeclareMathOperator{\Diam}{diam}
\DeclareMathOperator{\pack}{pack}
\DeclareMathOperator{\cov}{cov}
\newcommand{\beq}{\begin{equation}}
\newcommand{\eeq}{\end{equation}}
\newcommand{\beqa}{\begin{equation}\begin{aligned}}
\newcommand{\eeqa}{\end{aligned}\end{equation}}
\newcommand{\brmk}{\begin{rmk}}
\newcommand{\ermk}{\end{rmk}}
\newcommand{\partref}[1]{\hbox{(\csname @roman\endcsname{\ref{#1}})}}
\newcommand{\Lip}{{\mathrm{Lip}}}
\newcommand{\bX}{{ \textbf{X} }}
\newcommand{\twopartdef}[4]
{
	\left\{
		\begin{array}{ll}
			#1 & \mbox{if } #2 \\ 
			#3 & \mbox{if } #4
		\end{array}
	\right.
}
\newcommand{\threepartdef}[6]
{ 
	\left\{
		\begin{array}{lll}
			#1 & \mbox{if } #2 \\ 
			#3 & \mbox{if } #4 \\ 
			#5 & \mbox{if } #6
		\end{array}
	\right.
}
\def\dual#1{\expandafter\dual@aux#1\@nil}
\def\dual@aux#1/#2\@nil{\begin{tabular}{@{}c@{}}#1\\#2\end{tabular}}
\def\three#1{\expandafter\three@aux#1\@nil}
\def\three@aux#1/#2/#3\@nil{\begin{tabular}{@{}c@{}c@{}}#1\\#2\\#3\end{tabular}}
\def\four#1{\expandafter\four@aux#1\@nil}
\def\four@aux#1/#2/#3/#4\@nil{\begin{tabular}{@{}c@{}c@{}c@{}}#1\\#2\\#3\\#4\end{tabular}}
\title{Higher Order Lipschitz Sandwich Theorems}
\author{Terry Lyons and Andrew D. McLeod}
\date{\today}
\begin{document}
\usetagform{red}
\maketitle

\begin{abstract}
We investigate the consequence of two 
$\Lip(\gamma)$ functions, in the sense of Stein, being 
close throughout a subset of their domain.
A particular consequence of our results is the following.
Given $K_0 > \ep > 0$ and $\gamma > \eta > 0$ there is a 
constant $\de = \de(\gamma,\eta,\ep,K_0) > 0$ for which 
the following is true. 
Let $\Sigma \subset \R^d$ be closed and  
$f , h : \Sigma \to \R$ be $\Lip(\gamma)$ functions whose 
$\Lip(\gamma)$ norms are both bounded above by $K_0$. 
Suppose $B \subset \Sigma$ is closed and that $f$ and $h$ 
coincide 
throughout $B$. Then over the set of points in $\Sigma$ whose 
distance to $B$ is at most $\de$ we have that the 
$\Lip(\eta)$ norm of the difference $f-h$ is bounded above 
by $\ep$.
More generally, we establish that this phenomenon remains 
valid in a less restrictive Banach space setting under the 
weaker hypothesis that the two $\Lip(\gamma)$ functions
$f$ and $h$ are only close in a pointwise sense 
throughout the closed subset $B$.
We require only that the subset $\Sigma$ be closed; 
in particular, the case that $\Sigma$ is finite is covered
by our results.
The restriction that $\eta < \gamma$ is sharp in the sense
that our result is false for $\eta := \gamma$.
\end{abstract}

\begin{center}
\textsl{Mathematics Subject Classification 2020}:
\textbf{26B35} (primary);
\textbf{26D07}, \textbf{46A32}, \textbf{46B28},
\textbf{46M05} (secondary).
\end{center}

\tableofcontents

\section{Introduction}
\label{intro}
The notion of $\Lip(\gamma)$ functions, for $\gamma > 0$, 
introduced in \cite{Ste70} provides an extension 
of $\gamma$-H\"{o}lder regularity that is both non-trivial 
and meaningful even when $\gamma > 1$.
This notion of regularity is the appropriate one for the 
study of \textit{rough paths} instigated by the first author 
in \cite{Lyo98}; an introductory overview to this theory
may be found in \cite{CLL04}, for example. 
Moreover, $\Lip(\gamma)$ regularity underpins the efforts 
made to extend the theory of rough paths to the setting of 
manifolds \cite{CLL12,BL22}.
Further the flow of $\Lip(\gamma)$ vector fields is utilised
to investigate the \textit{accessibility problem} regarding
the use of classical ODEs to obtain the terminal solution to 
a \textit{rough differential equations} driven by 
\textit{geometric rough paths} in \cite{Bou15,Bou22}.
The notion of $\Lip(\gamma)$ regularity is well-defined 
for functions defined on arbitrary closed subsets including,
in particular, finite subsets.

The origin of $\Lip(\gamma)$ functions go back at least as 
far as the original extension work of Whitney in \cite{Whi34-I}.
This work considered the following extension problem. 
Given an integer $m \in \Z_{\geq 1}$ and a closed subset 
$A \subset \R^d$, when can a real-valued function 
$f : A \to \R$ be extended to a $C^m(\R^d)$ function 
$F : \R^d \to \R$ such that $F \rvert_A \equiv f$.
Whitney introduces a definition of $f$ being $m$-times 
continuously differentiable on an arbitrary closed subset 
$A \subset \R^d$, which we denote by $f \in C^m(A)$,
and subsequently establishes that this 
condition is sufficient to ensure that $f$ admits an extension
to an element $F \in C^m(\R^d)$ (see Section 3 and Theorem I in \cite{Whi34-I}).
A variant of this extension result with quantified estimates may be found in \cite{Whi44}.

Whitney's definition of $f \in C^m(A)$ involves assigning a family of ``derivatives" for $f$ on $A$. 
Hence applying Whitney's extension theorem requires one to first fix such an assignment of derivatives. 
Numerous works have subsequently considered the extension problem proposed by Whitney with the additional 
constraint of avoiding such an assignment, i.e. using only the values of the function $f$ throughout $A$.
For the case that $d=1$ Whitney himself provided an answer to this problem using the \textit{divided differences}
of $f$; see \cite{Whi34-II}.

The general case that $d \geq 1$ was fully resolved by Fefferman in \cite{Fef06,Fef07}.
His resolution builds upon the reformulation of Whitney's result in \cite{Whi34-II} as a \textit{finiteness principle}
by Brudnyi and Shvartsman \cite{BS94,BS01}.
The key point is that the finiteness principle no longer involves the divided differences of $f$.
The subsequent finiteness principle established by Fefferman 
in \cite{Fef05} underlies his resolution to the Whitney extension problem (see also \cite{Fef09-I}).
Subsequent algorithmic approaches to computing an extension 
have been considered by Fefferman et al. in \cite{FK09-I,FK09-II,Fef09-II,FIL16,FIL17}.

Returning to Whitney's original extension theorem in \cite{Whi34-I}, analogous results have been established 
in non-Euclidean settings where the domain of $f$ is \textit{not} a subset of $\R^d$ for some $d \in \Z_{\geq 1}$.
A $C^1$ version of Whitney's extension theorem was 
established for real valued mappings defined on subsets of 
the sub-Riemannian Heisenberg group in \cite{FSS01}.
Mappings taking their values in the Heisenberg group have 
also been considered; a version of Whitney's extension theorem 
has been established for horizontal $C^m$ curves in the 
Heisenberg group \cite{Zim18,PSZ19}.
Moreover, a finiteness principle for horizontal curves in
the Heisenberg group is proven in \cite{Zim21}.
Whitney-type extensions for horizontal $C^1$ curves
in general Carnot groups and sub-Riemannian manifolds 
have been considered in \cite{JS17,SS18}.

In this article we focus on Stein's notion of $\Lip(\gamma)$
functions in the Euclidean setting.
Motivated by Whitney's original definition
of $C^m(A)$ for a closed subset $A \subset \R^d$ in 
\cite{Whi34-I},
Stein's definition of a $\Lip(\gamma)$ is an 
extension of the classical notion of 
Lipschitz (or H\"{o}lder) continuity. 
Indeed, for $\gamma \in (0,1]$ Stein's definition 
coincides with the classical notion of a function being 
bounded and $\gamma$-H\"{o}lder continuous.
For $\gamma > 1$ Stein's definition provides a non-trivial 
extension of H\"{o}lder regularity to higher orders.

Stein's definition of a $\Lip(\gamma)$ function is a
refined weaker assignment of a family of ``derivatives" 
to a function $f$ than the assignment proposed in 
Whitney's definition of $C^m(A)$.
As in Whitney's original work \cite{Whi34-I}, Stein's 
definition of $\Lip(\gamma)$ requires, for each point
$x \in A$, the prescription of a polynomial $P_x$ based at 
$x$. 
These polynomials act as proposals for how the function 
should look at points distinct from $x$, and, similarly to 
Whitney's definition of $C^m(A)$, 
these polynomials are required to satisfy Taylor-like 
expansion properties. 
In particular, it is required that for every $x \in A$ we have 
that $P_x(x) = f(x)$, and that the remainder term 
$R(x,y) := f(x) - P_x(y)$ is bounded 
above by $C|y-x|^{\gamma}$ for some constant $C>0$ 
and every pair $x,y \in A$.
This remainder term bound is weaker than the 
remainder term bound imposed in Whitney's definition of
a $C^m(A)$ function in \cite{Whi34-I}.

To illustrate the sense in which $\Lip(\gamma)$ is weaker
than $C^m(A)$ let $\gamma \in \Z_{\geq 1}$ and $O \subset \R^d$
be a non-empty open subset.
If $f : O \to \R$ is $C^{\gamma}(O)$ in the sense of Whitney
\cite{Whi34-I}
then $f$ is $C^{\gamma}(O)$ in the 
classical sense. 
However, if $f : O \to \R$ is $\Lip(\gamma)$ in the sense 
of Stein \cite{Ste70} then $f$ is $C^{\gamma-1}(O)$ in the 
classical sense with its $(\gamma-1)^{\text{th}}$ derivative
$D^{\gamma-1}f$ only guaranteed to be Lipschitz rather than 
$C^1$.

Analogously to Whitney's extension theorem in \cite{Whi34-I}, 
Stein proves an extension theorem establishing that,
given any $\gamma > 0$, if $A \subset \R^d$
is closed and $f : A \to \R$ is $\Lip(\gamma)$, then 
there exists a function $F : \R^d \to \R$ that is $\Lip(\gamma)$
and satisfies that $F \rvert_A \equiv f$; see Theorem 4 in 
Chapter VI of \cite{Ste70}.
Recall that for $\gamma \in (0,1]$ Stein's notion of a 
$\Lip(\gamma)$ function coincides with the classical notion
of a bounded $\gamma$-H\"{o}lder continuous function.
Consequently, for $\gamma \in (0,1]$, Stein's extension theorem
recovers earlier extension results independently established 
by Whitney in \cite{Whi34-I} and McShane in \cite{McS34}.
Stein's extension theorem allows us to make the following 
conclusion regarding the original Whitney extension problem.
Given an integer $k \in \Z_{\geq 0}$ and any 
$\ep > 0$, the assumption that $f : A \to \R$ is $\Lip(k+\ep)$
ensures that $f$ admits an extension to $F \in C^k(\R^d)$.
Finally, Stein's extension theorem is essential in 
the first author's work on rough paths in \cite{Lyo98}. 

Whilst our focus within this article is on Stein's notion of
$\Lip(\gamma)$ functions within a Euclidean framework, 
it is worth mentioning some recent works proposing 
generalisations of Stein's definition to non-Euclidean 
frameworks. A definition of $\Lip(\gamma)$ functions for 
functions on \textit{Carnot groups} that is consistent 
with Stein's original definition in \cite{Ste70} is proposed 
in \cite{PV06}; further, the authors establish extension 
theorems analogous to Stein's extension theorem for this 
notion of $\Lip(\gamma)$ functions on \textit{Carnot groups} 
in \cite{PV06}.
More generally, building on the rough integration theory 
of cocyclic one-forms developed in \cite{LY15} and 
\cite{Yan16} generalising the work of the first author in 
\cite{Lyo98}, a definition of $\Lip(\gamma)$ functions 
on unparameterised paths is proposed in \cite{Nej18}.
It is established, in particular, in \cite{Nej18} that
an extension property analogous to the Stein-Whitney extension 
property is valid for this notion of $\Lip(\gamma)$ functions
on unparameterised path space.

It is clear that, in general, there is no uniqueness 
associated to Stein's extension theorem; for example 
the mapping $[0,1] \to \R$ given by $t \mapsto |t|$ 
is $\Lip(1)$ (i.e. Lipschitz in the classical sense),
and there are numerous distinct ways it can be extended
to a bounded Lipschitz continuous mapping $\R \to \R$.
Nevertheless, it seems intuitively clear that any two 
extensions must be, in some sense, \textit{close} for points
$x \in \R$ that are not, in some sense, too far away from 
the interval $[0,1]$.

The aim of this article is to establish precise results 
realising this intuition for $\Lip(\gamma)$ functions 
for any $\gamma > 0$.
If $\gamma \in (0,1]$, which we recall means that
Stein's notion of $\Lip(\gamma)$ functions \cite{Ste70} 
coincides with the classical notion of bounded 
$\gamma$-H\"{o}lder continuous functions, then such results 
are well-known. In particular, they arise as
immediate consequences of the maximal and minimal 
extensions of Whitney \cite{Whi34-I} and McShane 
\cite{McS34} respectively.
The main novelty of this paper is establishing our main 
results in the case that $\gamma > 1$ 
(cf. the \textit{Lipschitz Sandwich Theorem} \ref{lip_k_cover_thm},
the \textit{Single-Point Lipschitz Sandwich} Theorem 
\ref{lip_k_ball_estimates_thm}, and the
\textit{pointwise Lipschitz sandwich theorem} 
\ref{lip_k_cover_C0_thm} in Section \ref{main_results_sec}), 
for which we 
have been unable to locate formal statements of such 
properties within the existing literature.
Moreover, our results are presented in a more general setting 
than restricting to working within $\R^d$ for some 
$d \in \Z_{\geq 1}$. In particular, we consider the 
Banach space framework 
utilised in both \cite{Lyo98} and \cite{Bou15}, for example.

A particular consequence of our results is that $\Lip(\gamma)$ 
functions can be cost-effectively approximated.
Loosely, if $0 < \eta < \gamma$, then on compact sets 
the $\Lip(\eta)$-behaviour of a $\Lip(\gamma)$ function 
is determined, up to an arbitrarily small error $\ep > 0$, 
via knowledge
of an upper bound for the $\Lip(\gamma)$ norm of the function
on the entire compact set, and the knowledge of the 
value of the $\Lip(\gamma)$ function at a finite number of 
points.
The number of points at which it is required to know 
the value of the $\Lip(\gamma)$ function depends on 
the upper bound of its $\Lip(\gamma)$ norm on the entire 
compact set, the regularity parameters $\eta$ and $\gamma$, 
the desired error size $\ep > 0$, and the compact subset
(cf. Corollary \ref{lip_k_finite_subset_corollary} in Section 
\ref{sec:cost-effect_approx}).

The remainder of the paper is structured as follows. 
In Section \ref{sec:notation} we provide Stein's definition of a 
$\Lip(\gamma)$ function within a Banach space framework
(cf. Definition \ref{lip_k_def}) and fix the notation and 
conventions that will be used throughout the article.
In Section \ref{main_results_sec} we 
state our main results; the 
\textit{Lipschitz Sandwich Theorem} \ref{lip_k_cover_thm},
the \textit{Single-Point Lipschitz Sandwich Theorem} 
\ref{lip_k_ball_estimates_thm}, and the
\textit{Pointwise Lipschitz Sandwich Theorem} 
\ref{lip_k_cover_C0_thm}.
In Section \ref{sec:cost-effect_approx} we 
illustrate how the 
\textit{Lipschitz Sandwich Theorem} \ref{lip_k_cover_thm}
and the 
\textit{Pointwise Lipschitz Sandwich Theorem} 
\ref{lip_k_cover_C0_thm} allow one to 
cost-effectively approximate $\Lip(\gamma)$ functions
defined on compact subsets
(cf. Corollaries \ref{lip_k_finite_subset_corollary} 
and \ref{lip_k_finite_subset_C0_corollary} and  
Remarks \ref{rmk:finite_subset_determines_behaviour}, 
\ref{rmk:explicit_example_1}, 
\ref{rmk:pointwise_behaviour_finite_subset}, 
and \ref{rmk:explicit_example_two}).
In Section \ref{lip_k_functions} we  
establish explicit pointwise remainder term 
estimates for a  $\Lip(\gamma)$ function.
In Section \ref{nested_embed_sec} we record that, 
for $\gamma_1 > \gamma_2 > 0$, any $\Lip(\gamma_1)$
function is also a $\Lip(\gamma_2)$ function.
Additionally, we establish an explicit constants 
$C \geq 1$ for which we have that
$||\cdot||_{\Lip(\gamma_2)} \leq C || \cdot ||_{\Lip(\gamma_1)}$ 
(cf. the \textit{Lipschitz Nesting} 
Lemma \ref{lip_k_spaces_nested_lemma}).
In Section \ref{local_lip_bds_sec} we record, given a 
a $\Lip(\gamma)$ function $f$ defined on a subset $\Sigma$, 
a point $p \in \Sigma$, and $\eta \in (0,\gamma)$, 
quantified estimates for the $\Lip(\eta)$-norm of $f$ over
a neighbourhood of the point $p$ in terms of the value of 
$f$ at the point $p$ (cf. Lemmas 
\ref{lip_k_lip_norm_ball_bound_lemma} and 
\ref{lip_k_func_ext_lemma_gen_eta}). Sections 
\ref{lip_k_sand_thms_proofs}, 
\ref{single_point_lip_sand_thm_pf_sec}, and 
\ref{lip_sand_thm_pf_sec} contain the proofs of the main 
results stated in Section \ref{main_results_sec}.
In Section \ref{lip_k_sand_thms_proofs} we prove the  
\textit{Pointwise Lipschitz Sandwich Theorem} 
\ref{lip_k_cover_C0_thm}. 
In Section \ref{single_point_lip_sand_thm_pf_sec} we prove the 
\textit{Single-Point Lipschitz Sandwich Theorem} 
\ref{lip_k_ball_estimates_thm}.
Finally, in Section \ref{lip_sand_thm_pf_sec} we establish the full 
\textit{Lipschitz Sandwich Theorem} \ref{lip_k_cover_thm}.

Several of our intermediary lemmata record properties of
$\Lip(\gamma)$ functions that appear elsewhere in the literature.
Nevertheless there are two main benefits of including these results.
Firstly, the variants we record are in specific 
forms that are particularly useful for our purposes. 
Secondly, their inclusion makes our article fully self-contained.
\vskip 4pt
\noindent 
\textit{Acknowledgements}: This work was supported by 
the \DataSig Program under the EPSRC grant 
ES/S026347/1, the Alan Turing Institute under the
EPSRC grant EP/N510129/1, 
the Mathematical Foundations of
Intelligence: An ``Erlangen Programme" for AI under the
EPSRC Grant EP/Y028872/1,
the Data Centric Engineering 
Programme (under Lloyd's Register Foundation grant G0095),
the Defence and Security Programme (funded by the UK 
Government) and the Hong Kong Innovation and Technology 
Commission (InnoHK Project CIMDA). 
This work was funded by the Defence and Security
Programme (funded by the UK Government).
For the purpose of open access, the author has applied a
CC BY public copyright licence to any Author Accepted Manuscript
(AAM) version arising from this submission.

\section{Mathematical Framework and Notation}
\label{sec:notation}
In this section we introduce the 
mathematical framework considered throughout the remainder 
of the article, provide Stein's definition of a 
$\Lip(\gamma)$ function within this framework 
(cf. Definition \ref{lip_k_def}), 
and fix the notation and conventions that will be used
throughout the article.
Throughout the remainder of this article, 
when referring to metric balls 
we use the convention that those denoted by $\B$
are taken to be open and those denoted by $\ovB$ 
are taken to be closed.

Let $V$ and $W$ be real Banach spaces and assume that 
$\Sigma \subset V$ is a closed subset. The first goal of 
this section is to define the space $\Lip(\gamma,\Sigma,W)$
of $\Lip(\gamma)$ functions with domain $\Sigma$ and target $W$. 
This will require a choice of norms for the 
tensor powers of $V$. We restrict to considering norms that 
are \textit{admissible} in the sense originating in \cite{Sch50}.
The precise definition is the following.

\begin{definition}[Admissible Norms on Tensor Powers]
\label{admissible_tensor_norm}
Let $V$ be a Banach space. We say that its tensor powers
are endowed with admissible norms if for each 
$n \in \Z_{\geq 2}$ we have equipped the tensor power 
$V^{\otimes n}$ of $V$ with a norm 
$|| \cdot ||_{V^{\otimes n}}$ such that the following 
conditions hold.
\begin{itemize}
    \item For each $n \in \Z_{\geq 1}$ the symmetric 
    group $S_n$ acts by isometries on $V^{\otimes n}$,
    i.e. for any $\rho \in S_n$ and any
    $v \in V^{\otimes n}$ we have
    \beq
        \label{sym_group_iso}
            || \rho (v) ||_{V^{\otimes n}}
            = || v ||_{V^{\otimes n}}.
    \eeq
    The action of $S_n$ on $V^{\otimes n}$
    is given by permuting the order of the letters,
    i.e. if $a_1 \otimes \ldots \otimes a_n 
    \in V^{\otimes n}$ and $\rho \in S_n$ then 
    $\rho ( a_1 \otimes \ldots \otimes a_n )
    := a_{\rho(1)} \otimes \ldots \otimes a_{\rho(n)}$,
    and the action is extended to the entirety of
    $V^{\otimes n}$ by linearity.
    \item For any $n,m \in \Z_{\geq 1}$ and any
    $v \in V^{\otimes n}$ and $w \in V^{\otimes m}$
    we have
    \beq
        \label{ten_prod_unit_norm}
            || v \otimes w ||_{V^{\otimes (n+m)}}
            \leq 
            || v ||_{V^{\otimes n}} 
            || w ||_{V^{\otimes m}}.
    \eeq
    \item For any $n,m \in \Z_{\geq 1}$ and any
    $\phi \in \left(V^{\otimes n}\right)^{\ast}$ 
    and $\sigma \in \left(V^{\otimes m}\right)^{\ast}$
    we have
    \beq
        \label{ten_prod_unit_dual_norm}
            || \phi \otimes \sigma
            ||_{\left(V^{\otimes (n+m)}\right)^{\ast}}
            \leq 
            || \phi 
            ||_{\left(V^{\otimes n}\right)^{\ast}} 
            || \sigma 
            ||_{\left(V^{\otimes m}\right)^{\ast}}.
    \eeq
    Here, given any $k \in \Z_{\geq 1}$,
    the norm 
    $|| \cdot ||_{\left(V^{\otimes k}\right)^{\ast}}$
    denotes the dual-norm induced by
    $|| \cdot ||_{V^{\otimes k}}$.
\end{itemize}
\end{definition}
\vskip 4pt
\noindent
It turns out (see \cite{Rya02}) that having 
\textit{both} the inequalities \eqref{ten_prod_unit_norm} 
and \eqref{ten_prod_unit_dual_norm} ensures that
we in fact have equality in both estimates.
Hence if the tensor powers of $V$ are equipped
with admissible norms, we have equality 
in both \eqref{ten_prod_unit_norm} and 
\eqref{ten_prod_unit_dual_norm}.

There are, in some sense, two main examples of 
admissible tensor norms. 
The first is the \textit{projective tensor norm}.
This is defined, for $n \geq 2$, on $V^{\otimes n}$ by
setting, for $v \in V^{\otimes n}$,
\beq
    \label{proj_ten_norm}  
        ||v||_{\proj,n} 
        :=
        \inf \left\{ 
        \sum_{i=1}^{\infty} \prod_{k=1}^n || a_{i_k} ||_V
        ~:~
        v = \sum_{i=1}^{\infty}
        a_{i_1} \otimes \ldots \otimes a_{i_n}
        \text{ and }
        \sum_{i=1}^{\infty} \prod_{k=1}^n || a_{i_k} ||_V 
        < \infty
        \right\}.
\eeq 
The second is the \textit{injective tensor norm}.
This is defined, for $n \geq 2$, on $V^{\otimes n}$ by
setting, for $v \in V^{\otimes n}$,
\beq
    \label{inj_ten_norm}
        ||v||_{\inj,n} := 
        \sup \left\{ \left|
        \vph_1 \otimes \ldots \otimes \vph_n (v)
        \right| 
        ~:~
        \vph_1 , \ldots , \vph_n \in V^{\ast} 
        \text{ and }
        ||\vph_1||_{V^{\ast}} = \ldots = 
        ||\vph_n||_{V^{\ast}} = 1
        \right\}.
\eeq
The injective and projective tensor norms are the main
two examples in the following sense. 
If we equip the tensor powers of $V$ with \textit{any} 
choice of admissible tensor norms in the sense of 
Definition \ref{admissible_tensor_norm}, then for every 
$n \in \Z_{\geq 2}$, if $||\cdot||_{V^{\otimes n}}$ denotes
the admissible tensor norm chosen for $V^{\otimes n}$, 
we have that for every $v \in V^{\otimes n}$ that
(cf. Proposition 2.1 in \cite{Rya02})
\beq
    \label{inj<norm<proj}
        ||v||_{\inj,n} \leq ||v||_{V^{\otimes n}} 
        \leq || v ||_{\proj , n}.
\eeq
In the case that the 
norm $||\cdot||_V$ is induced by an inner product 
$\left< \cdot , \cdot \right>_V$, 
in the sense that 
$||\cdot||_V = \sqrt{\left< \cdot , \cdot \right>_V}$,
we can equip the tensor powers of $V$ with admissible 
norms in the sense of Definition \ref{admissible_tensor_norm}
by extending the inner product $\left< \cdot , \cdot \right>_V$
to the tensor powers of $V$.

That is, suppose $n \in \Z_{\geq 2}$.
Then we define an inner product 
$\left< \cdot , \cdot \right>_{V^{\otimes n}}$ on 
$V^{\otimes n}$ as follows.
First, if $u , w \in V^{\otimes n}$ are given 
by $u = u_1 \otimes \ldots \otimes u_n$
and $w = w_1 \otimes \ldots \otimes w_n$
for elements $u_1 , \ldots , u_n , w_1 , \ldots , w_n \in V$,
define 
\beq
    \label{V^n_inner_product}
        \left< u , w \right>_{V^{\otimes n}}
        :=
        \prod_{s=1}^n \left< u_s , w_s \right>_V.
\eeq
Subsequently, extend 
$\left< \cdot , \cdot \right>_{V^{\otimes n}}$
defined in \eqref{V^n_inner_product} to the entirety of 
$V^{\otimes n} \times V^{\otimes n}$ by linearity so that 
the resulting function 
$\left< \cdot , \cdot \right>_{V^{\otimes n}} : 
V^{\otimes n} \times V^{\otimes n} \to \R$
defines an inner product on $V^{\otimes n}$.
For each $n \in \Z_{\geq 2}$, equip
$V^{\otimes n}$ with the norm $||\cdot||_{V^{\otimes n}} := 
\sqrt{\left< \cdot , \cdot \right>_{V^{\otimes n}}}$
induced by the inner product 
$\left< \cdot , \cdot \right>_{V^{\otimes n}}$. 
Then the tensor powers of $V$ are all equipped with admissible 
norms in the sense of Definition \ref{admissible_tensor_norm}.

Returning to the general setting that $V$ and $W$ are merely
assumed to be real Banach spaces,
we now define a $\Lip(\gamma,\Sigma,W)$ function.

\begin{definition}[$\Lip(\gamma, \Sigma,W)$ functions]
\label{lip_k_def}
Let $V$ and $W$ be Banach spaces, $\Sigma \subset V$ 
a closed subset, and assume that the tensor powers of 
$V$ are all equipped with admissible norms 
(cf. Definition \ref{admissible_tensor_norm}). 
Let $\gamma > 0$ with $k \in \Z_{\geq 0}$ such that
$\gamma \in (k,k+1]$. Suppose that 
$\psi^{(0)} : \Sigma \to W$ is a function, 
and that for each 
$l \in \{1, \ldots , k\}$ we have a function 
$\psi^{(l)} : \Sigma \to \cl( V^{\otimes l} ; W)$
taking its values in the space of symmetric 
$l$-linear forms from $V$ to $W$. Then the collection 
$\psi = (\psi^{(0)} , \psi^{(1)} , \ldots , \psi^{(k)})$ 
is an element of $\Lip(\gamma,\Sigma,W)$ if there exists
a constant $M \geq 0$ for which the following
conditions hold:
\begin{itemize}
    \item For each $l \in \{0, \ldots , k\}$ and 
    every $x \in \Sigma$ we have that
    \beq    
        \label{lip_k_bdd}
            || \psi^{(l)}(x) ||_{\cl(V^{\otimes l} ; W)}
            \leq M
    \eeq
    \item For each $j \in \{0, \ldots , k\}$ 
    define $R^{\psi}_j : \Sigma \times \Sigma \to 
    \cl (V^{\otimes j} ; W)$ for  
    $z,p \in \Sigma$ and $v \in V^{\otimes j}$ by 
    \beq
        \label{lip_k_tay_expansion}
            R^{\psi}_j(z,p)[v] :=
            \psi^{(j)}(p)[v] -
            \sum_{s=0}^{k-j} \frac{1}{s!}
            \psi^{(j+s)}(z)
            \left[v \otimes (p-z)^{\otimes s}\right].
    \eeq
    Then whenever $l \in \{0, \ldots , k\}$ and 
    $x,y \in \Sigma$ we have 
    \beq
        \label{lip_k_remain_holder}
            \left|\left| 
            R^{\psi}_l(x,y) 
            \right|\right|_{\cl(V^{\otimes l} ; W)}
            \leq
            M ||y - x||_V^{\gamma - l}.
    \eeq
\end{itemize}
We sometimes say that $\psi \in \Lip(\gamma,\Sigma,W)$
without explicitly mentioning the functions 
$\psi^{(0)} , \ldots , \psi^{(k)}$. 
Furthermore, given $l \in \{0, \ldots , k\}$, 
we introduce the notation that 
$\psi_{[l]} := (\psi^{(0)} , \ldots , \psi^{(l)})$.
The $\Lip(\gamma,\Sigma,W)$ norm of $\psi$, denoted by
$||\psi||_{\Lip(\gamma,\Sigma,W)}$, is the smallest 
$M \geq 0$ satisfying the requirements
\eqref{lip_k_bdd} and \eqref{lip_k_remain_holder}.
\end{definition}
\vskip 4pt
\noindent
In Definition \ref{lip_k_def} we use that $V^{\otimes 0} := \R$ to 
observe that $\cl(V^{\otimes 0};W) = W$. 
Thus we implicitly assume that $\cl(V^{\otimes 0};W) = W$ is 
taken to be equipped with the norm $||\cdot||_W$ on $W$; that is, 
$|| \cdot ||_{\cl(V^{\otimes 0};W)} = || \cdot ||_W$ in both 
\eqref{lip_k_bdd} and \eqref{lip_k_remain_holder}. Thus
a consequence of Definition \ref{lip_k_def} 
is that $\psi^{(0)} \in C^0(\Sigma;W)$ with 
$\left|\left| \psi^{(0)} \right|\right|_{C^0(\Sigma;W)}
\leq || \psi ||_{\Lip(\gamma,\Sigma,W)}$. Here, given 
$f \in C^0(\Sigma;W)$, we take 
$||f||_{C^0(\Sigma;W)} := 
\sup \left\{ ||f(x)||_W : x \in \Sigma \right\}$.

Further, we implicitly assume in Definition \ref{lip_k_def}
that, for each $j \in \{1, \ldots , k\}$,
norms have been chosen for the 
spaces $\cl(V^{\otimes j} ; W)$ of symmetric $j$-linear 
forms from $V$ to $W$. 
Observe that $\cl (V^{\otimes j} ; W) \subset 
L(V^{\otimes j} ; W)$ where $L(V^{\otimes j} ; W)$ denotes
the space of linear maps $V^{\otimes j}$ to $W$.
There are, of course, numerous possible choices for such 
norms. 
Throughout this article we will always assume the following 
choice. Given a norm $||\cdot||_{V^{\otimes j}}$ on 
$V^{\otimes j}$ and a norm $||\cdot||_{W}$ on $W$, we equip
$L(V^{\otimes j} ; W)$
with the corresponding operator norm.
That is, for any $\bA \in L(V^{\otimes j} ; W)$ we 
have
\beq
    \label{choice_of_op_norm}
        || \bA ||_{L(V^{\otimes j} ; W)}
        := 
        \sup \left\{ 
        \left|\left| \bA[v] \right|\right|_W 
        : v \in V^{\otimes j} \text{ and }
        ||v||_{V^{\otimes j}} = 1 \right\}.
\eeq
When $\bA \in \cl(V^{\otimes j} ; W)$ we will denote the 
norm defined in \eqref{choice_of_op_norm} by 
$||\bA||_{\cl(V^{\otimes j} ; W)}$.
This choice of norm means that our definition of a 
$\Lip(\gamma,\Sigma,W)$ function differs from the definition
used in \cite{Bou15}.
Indeed we require the bounds in \eqref{lip_k_bdd} 
and \eqref{lip_k_remain_holder} to hold for the 
operator norms, whilst in \cite{Bou15} estimates of the 
analogous form are only required to be valid for 
\textit{rank-one} elements in $V^{\otimes j}$; that is, 
for elements $v = v_1 \otimes \ldots \otimes v_j$ where 
$v_1 , \ldots , v_j \in V$.
Consequently, our definition of a $\Lip(\gamma,\Sigma,W)$ 
function is more restrictive than the notion in \cite{Bou15}.

A good way to understand a $\Lip(\gamma,\Sigma,W)$
function is as a function that ``locally looks like a 
polynomial function". Given any point $x \in \Sigma$,
consider the polynomial $\Psi_x : V \to W$ 
defined for $y \in V$ by
\beq
    \label{intro_lip_k_psi_proposed_value}
        \Psi_x(y)  
        := \sum_{s=0}^{k} 
        \frac{1}{s!}
        \psi^{(s)}(x)\left[   
        (y-x)^{\otimes s} \right].
\eeq
The polynomial $\Psi_x$ defined in 
\eqref{intro_lip_k_psi_proposed_value} gives a proposal, 
based at the point $x \in \Sigma$, for how the function 
$\psi$ behaves away from $x$.
The remainder term estimates in \eqref{lip_k_remain_holder}
of Definition \ref{lip_k_def} ensure that for every 
$y \in \Sigma$ we have that
\beq
    \label{lip_k_proposal_diff_bd}
        \left|\left| \psi^{(0)}(y) - \Psi_x(y) 
        \right|\right|_W 
        \leq
        ||\psi||_{\Lip(\gamma,\Sigma,W)}
        ||y-x||_V^{\gamma}.
\eeq
It follows from \eqref{lip_k_proposal_diff_bd} that, 
for a given $\ep > 0$, if we take 
$\de := (\ep/||\psi||_{\Lip(\gamma,\Sigma,W)})^{1/\gamma}$, 
then the polynomial $\Psi_x$ is within $\ep$ of 
$\psi^{(0)}$, in the $||\cdot||_W$ norm sense, 
throughout the neighbourhood 
$\ovB_V(x,\de) \cap \Sigma$ of the point $x$.

The collection of functions $\psi^{(0)} , \ldots , \psi^{(k)}$ 
are related to
$\Psi_x$ in the following sense. For each 
$l \in \{0, \ldots , k\}$ the element 
$\psi^{(l)}(x) \in \cl(V^{\otimes l};W)$ is the $l^{th}$ 
derivative of $\Psi_x(\cdot)$ at $x$.
The proposal functions $\Psi_x$ for points $x \in \Sigma$ 
enable one to view a 
$\Lip(\gamma,\Sigma,W)$ function in a more traditional 
manner as the single function 
$\Sigma \times V \to W$ defined by the mapping 
$(x,y) \mapsto \Psi_x(y)$.
The remainder term estimates in \eqref{lip_k_remain_holder}
of Definition \ref{lip_k_def} ensure that this mapping exhibits
H\"{o}lder regularity in a classical sense. 

To illustrate assume that $\gamma > 1$ so that $k \geq 1$, 
and suppose we have basepoints $x,w \in \Sigma$ and 
$y,z \in V$ such that $||x-w||_V \leq 1$ and $||y-z||_V \leq 1$.
Let $L_{z-x,y-x} , L_{z-w,z-x} \subset V$ denote the straight lines 
connecting $z-x$ to $y-x$ and $z-w$ to $z-x$ respectively.
Define $r_1 := \max \left\{ ||x-z||_V , ||x-y||_V \right\}$ 
and $r_2 := \max \left\{ ||z-w||_V , ||z-x||_V \right\}$ 
so that, in particular, we have both 
the inclusions $L_{z-x,y-x} \subset \ovB_V(0,r_1)$ and 
$L_{z-w,z-x} \subset \ovB_V(0,r_2)$.
Then we have the H\"{o}lder-type estimate that
\beq
    \label{classical_holder_bd}
        \left|\left| \Psi_x(y) - \Psi_w(z) \right|\right|_W
        \leq
        || \psi ||_{\Lip(\gamma,\Sigma,W)} \left(
        e^{r_1} 
        ||y-z||_V 
        +
        \left( e^{r_2} + e^{1+||z-x||_V}  \right)
        ||x-w||_V^{\gamma - k} \right).
\eeq 
To see this we first observe that 
\beq
    \label{evalpoint_vary_bd}
        \left|\left| \Psi_x(y) - \Psi_x(z) \right|\right|_W
        \stackrel{\eqref{intro_lip_k_psi_proposed_value}}{\leq}
        \sum_{s=0}^k \frac{1}{s!}
        \left|\left| \psi^{(s)}(x) \left[ 
        (y-x)^{\otimes s} - (z-x)^{\otimes s}
        \right] \right|\right|_W 
        \leq
        e^{r_1}
        ||\psi||_{\Lip(\gamma,\Sigma,W)}
        ||y-z||_V
\eeq
where the last inequality follows from applying the 
mean value theorem to each of the mappings
$v \mapsto (v-x)^{\otimes s}$ for $s \in \{1, \ldots , k\}$
and recalling that $L_{z-x,y-x} \subset \ovB_V(0,r_1)$.
Next observe, via \eqref{lip_k_tay_expansion} and 
\eqref{intro_lip_k_psi_proposed_value}, that
\begin{multline}
    \label{basepoint_vary_bd_A}
        \Psi_x(z) - \Psi_w(z) 
        =
        \psi^{(0)}(x) - \psi^{(0)}(w) + 
        \sum_{s=1}^k \frac{1}{s!} 
        R_s^{\psi}(w,x) \left[ (z-x)^{\otimes s} \right] + \\
        \sum_{s=1}^k 
        \sum_{j=1}^{k-s} \frac{1}{s!j!} \psi^{(s+j)}(w)
        \left[ (z-x)^{\otimes s} \otimes (x-w)^{\otimes j} 
        \right] +
        \sum_{s=1}^k \frac{1}{s!}  
        \psi^{(s)}(w) \left[ 
        (z-x)^{\otimes s} - (z-w)^{\otimes s}
        \right].
\end{multline}
To estimate the first term on the RHS 
of \eqref{basepoint_vary_bd_A} we compute 
\begin{multline}
    \label{term_one_basepoint_vary}
        \left|\left| \psi^{(0)}(x) - \psi^{(0)}(w)
        \right|\right|_W
        \stackrel{\eqref{lip_k_tay_expansion}}{\leq}
        \sum_{s=1}^k \frac{1}{s!} 
        \left|\left| \psi^{(s)}(w) \left[ 
        (x-w)^{\otimes s} \right]
        \right|\right|_W 
        + \left|\left| R_0^{\psi}(w,x) 
        \right|\right|_W \\
        \stackrel{
        \eqref{lip_k_bdd} ~\&~ \eqref{lip_k_remain_holder}
        }{\leq}
        \left( \sum_{s=1}^k \frac{1}{s!}||x-w||_V^s +
        ||w-x||_V^{\gamma}
        \right) ||\psi||_{\Lip(\gamma,\Sigma,W)}
        \leq e ||\psi||_{\Lip(\gamma,\Sigma,W)}
        || w - x||_V
\end{multline}
where the last inequality uses that $||w-x||_V \leq 1$.

For the second term on the RHS of \eqref{basepoint_vary_bd_A}
we compute that
\begin{multline}
    \label{term_two_basepoint_vary}
        \sum_{s=1}^k \frac{1}{s!} 
        \left|\left| 
        R_s^{\psi}(w,x) \left[ (z-x)^{\otimes s} \right]
        \right|\right|_W 
        \stackrel{\eqref{lip_k_remain_holder}}{\leq}
        \sum_{s=1}^k \frac{1}{s!}
        ||\psi||_{\Lip(\gamma,\Sigma,W)}
        ||w-x||_V^{\gamma - s} ||z-x||_V^s \\
        \leq 
        \left( e^{||z-x||_V} - 1 \right) 
        ||\psi||_{\Lip(\gamma,\Sigma,W)}
        ||w-x||^{\gamma - k}_V
\end{multline}
where the last inequality uses that $||w-x||_V \leq 1$.

For the third term on the RHS of \eqref{basepoint_vary_bd_A}
we compute that
\begin{multline}
    \label{term_three_basepoint_vary}
        \sum_{s=1}^k 
        \sum_{j=1}^{k-s} \frac{1}{s!j!} 
        \left|\left| \psi^{(s+j)}(w)
        \left[ (z-x)^{\otimes s} \otimes (x-w)^{\otimes j} 
        \right] \right|\right|_W 
        \stackrel{\eqref{lip_k_bdd}}{\leq}
        \sum_{s=1}^k \sum_{j=1}^{k-s}
        \frac{1}{j!s!} 
        ||\psi||_{\Lip(\gamma,\Sigma,W)}
        ||z-x||_V^s ||x-w||^j \\
        \leq 
        \left( e^{||z-x||_V} - 1 \right)(e-1)
        ||\psi||_{\Lip(\gamma,\Sigma,W)}
        ||x-w||_V
\end{multline}
where the last inequality uses that $||w-x||_V \leq 1$.

For the fourth term on the RHS of \eqref{basepoint_vary_bd_A}
we compute that
\beq
    \label{term_four_basepoint_vary}
        \sum_{s=1}^k \frac{1}{s!} 
        \left|\left|
        \psi^{(s)}(w) \left[ 
        (z-x)^{\otimes s} - (z-w)^{\otimes s}
        \right] \right|\right|_W
        \leq 
        e^{r_2} ||\psi||_{\Lip(\gamma,\Sigma,W)}
        ||x-w||_V 
\eeq
where, for each $s \in \{1, \ldots , k\}$, we have 
applied the mean value theorem to the mapping 
$v \mapsto (z-v)^{\otimes s}$ and recalled the 
inclusion $L_{z-w,z-x} \subset \ovB_V(0,r_2)$.

The combination of \eqref{basepoint_vary_bd_A}, 
\eqref{term_one_basepoint_vary}, 
\eqref{term_two_basepoint_vary}, 
\eqref{term_three_basepoint_vary}, and 
\eqref{term_four_basepoint_vary} yields that
\beq
    \label{basepoint_vary_bd}
        \left|\left| \Psi_x(z) - \Psi_w(z) \right|\right|_W
        \leq
        \left( e^{r_2} + e^{1 + ||z-x||_V}
        \right) ||\psi||_{\Lip(\gamma,\Sigma,W)}
        ||x-w||_V^{\gamma - k} 
\eeq
where we have used that $||x-w||_V \leq 1$ means 
$||x-w||_V \leq ||x-w||_V^{\gamma - k}$ since 
$\gamma - k \in (0,1]$.
The combination of \eqref{evalpoint_vary_bd}
and \eqref{basepoint_vary_bd} then establishes the 
estimate claimed in \eqref{classical_holder_bd}.

Returning to considering the collection 
$\psi = \left( \psi^{(0)} , \ldots , \psi^{(k)}
\right) \in \Lip(\gamma,\Sigma,W)$, 
on the interior of $\Sigma$ the functions
$\psi^{(1)} , \ldots , \psi^{(k)}$ are determined 
by $\psi^{(0)}$. The remainder term estimates in
\eqref{lip_k_remain_holder} for $\psi^{(l)}$
for each $l \in \{0,\ldots,k\}$ 
ensure, for each $j \in \{1 , \ldots , k\}$, that
$\psi^{(j)}$ is the classical $j^{th}$ Fr\'{e}chet derivative
of $\psi^{(0)}$ on the interior of $\Sigma$. 
Thus, on the interior of $\Sigma$, 
$\psi^{(0)}$ is $k$ times continuously 
differentiable, and its $k^{th}$ derivative is
$(\gamma - k)$-H\"{o}lder continuous.

\section{Main Results}
\label{main_results_sec}
In this section we state our main results 
and discuss some of their consequences.
Suppose that $V$ and $W$ are real Banach spaces, that 
$\Sigma \subset V$ is a closed subset, and that all the 
tensor powers of $V$ are equipped with admissible norms
(cf. Definition \ref{admissible_tensor_norm}).
Our starting point is to observe that Stein's extension 
theorem (Theorem 4 in Chapter VI of \cite{Ste70}) remains 
valid for functions in $\Lip(\gamma,\Sigma,W)$ provided 
the Banach space $V$ is finite dimensional.
Indeed, following the method proposed by Stein in \cite{Ste70},
one uses the Whitney cube decomposition of 
$V \setminus \Sigma$ (originating in \cite{Whi34-I}) 
to define an appropriately weighted average of the collection 
$\left\{ \Psi_x (\cdot) ~:~ x \in \Sigma \right\}$
to give an extension $\Psi$ of $\psi$ to the entirety of $V$.
Provided $V$ is finite dimensional, this approach and the 
corresponding estimates carry across to our setting verbatim 
from Chapter VI in \cite{Ste70}.
Only the given values of $\psi^{(0)} , \ldots , \psi^{(k)}$
at points $x \in \Sigma$ are used to define this extension;
consequently there is no dependence on the dimension 
of the target space $W$.

Moreover, the operator 
$A : \Lip(\gamma,\Sigma,W) \to \Lip(\gamma,V,W)$
defined by mapping $\phi \in \Lip(\gamma,\Sigma,W)$ to its 
corresponding weighted average $\Phi \in \Lip(\gamma,V,W)$ 
of the collection 
$\left\{ \Phi_x (\cdot) ~:~ x \in \Sigma \right\}$
is a bounded linear operator whose norm depends only 
on $\gamma$ and the dimension of $V$. 
That is, there is a constant $C = C(\gamma,\dim(V)) \geq 1$
such that for any $\phi \in \Lip(\gamma,\Sigma,W)$ we 
have that $A[\phi] \in \Lip(\gamma,V,W)$ satisfies
$|| A[\phi] ||_{\Lip(\gamma,V,W)} \leq 
C || \phi ||_{\Lip(\gamma,\Sigma,W)}$.
This is again a consequence of a verbatim repetition of 
the arguments of Stein in Chapter VI of \cite{Ste70}.

Suppose that $B \subset \Sigma$ is a non-empty 
closed subset. A particular consequence of 
Stein's extension theorem
(Theorem 4 in Chapter VI of \cite{Ste70}) is
that any element in $\Lip(\gamma,B,W)$ can be extended 
to an element in $\Lip(\gamma,\Sigma,W)$.
Recall that it is unreasonable to expect uniqueness 
for such an extension.
We are interested in understanding when 
extensions of an element in $\Lip(\gamma,B,W)$ to
$\Lip(\gamma,\Sigma,W)$ are forced to remain, in some 
sense, close throughout $\Sigma$.
We consider the following problem. Given elements
$\psi = \left(\psi^{(0)}, \ldots ,\psi^{(k)}\right)$
and $\vph = \left(\vph^{(0)}, \ldots ,\vph^{(k)}\right)$ 
in $\Lip(\gamma,\Sigma,W)$, when does knowing that 
$\psi$ and $\vph$ are, in some sense, ``close" 
on $B$ ensure that $\psi$ and $\vph$ remain ``close", 
in some possibly different sense, throughout $\Sigma$.

The following \textit{Lipschitz Sandwich Theorem} gives a 
condition for the subset $B$ and precise meanings for 
the notions of closeness to be considered between $\psi$
and $\vph$ on $B$ and $\Sigma$ respectively under which 
this problem has an affirmative answer.

\begin{theorem}[\textbf{Lipschitz Sandwich Theorem}]
\label{lip_k_cover_thm}
Let $V$ and $W$ be Banach spaces, and assume  
that the tensor powers of 
$V$ are all equipped with admissible norms (cf. 
Definition \ref{admissible_tensor_norm}).
Assume that $\Sigma \subset V$ is non-empty and closed.
Let $\ep >0$, 
$(K_1 , K_2) \in \left( \R_{\geq 0} \times \R_{\geq 0} \right) 
\setminus \{(0,0)\}$, and $\gamma > \eta > 0$ with 
$k,q \in \Z_{\geq 0}$ such that $\gamma \in (k,k+1]$
and $\eta \in (q,q+1]$. Then there exist constants 
$\de_0 = \de_0 (\ep,K_1+K_2,\gamma ,\eta) > 0$ and 
$\ep_0 = \ep_0(\ep, K_1+K_2, \gamma, \eta) > 0$ for
which the following is true.

Suppose $B \subset \Sigma$ is a closed subset that is
a $\de_0$-cover of $\Sigma$ in the sense that
\beq
    \label{lip_k_cover_thm_cover_sigma}
        \Sigma \subset \bigcup_{x \in B}
        \ovB_V (x, \de_0)
        = B_{\de_0}
        := \left\{ v \in V ~:~
        \text{There exists } z \in B \text{ such that }
        ||v - z ||_V \leq \de_0 \right\}.
\eeq
Suppose 
$\psi = \left(\psi^{(0)} ,\ldots ,\psi^{(k)}\right),
\vph = \left(\vph^{(0)} ,\ldots ,\vph^{(k)}\right) 
\in \Lip(\gamma,\Sigma,W)$ satisfy the $\Lip(\gamma,\Sigma,W)$
norm estimates
$||\psi||_{\Lip(\gamma,\Sigma,W)} \leq K_1$ and
$||\vph||_{\Lip(\gamma,\Sigma,W)} \leq K_2$.
Further suppose that for every $l \in \{0, \ldots , k\}$
and every $x \in B$ the difference 
$\psi^{(l)}(x) - \vph^{(l)}(x) \in \cl(V^{\otimes l};W)$
satisfies the bound
\beq
    \label{lip_k_sand_thm_B_close_assump}
        \left|\left| \psi^{(l)}(x) - \vph^{(l)}(x) 
        \right|\right|_{\cl(V^{\otimes l};W)} 
        \leq \ep_0.
\eeq
Then we may conclude that
\beq
    \label{lip_k_cover_thm_conc}
        \left|\left| \psi_{[q]} - \vph_{[q]} 
        \right|\right|_{\Lip(\eta,\Sigma,W)}
        \leq \ep
\eeq
where 
$\psi_{[q]}:= \left(\psi^{(0)},\ldots ,\psi^{(q)}\right)$
and $\vph_{[q]}:= 
\left(\vph^{(0)} , \ldots , \vph^{(q)}\right)$.
\end{theorem}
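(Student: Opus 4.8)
The plan is to reduce everything to the \emph{Pointwise Lipschitz Sandwich Theorem} \ref{lip_k_cover_C0_thm} (whose statement is not yet in the excerpt, but which the introduction describes as the version requiring only that $\psi^{(0)}$ and $\vph^{(0)}$ be pointwise close on $B$). The present theorem asks for closeness of \emph{all} the jets $\psi^{(l)}-\vph^{(l)}$, $0 \leq l \leq k$, on $B$ in hypothesis \eqref{lip_k_sand_thm_B_close_assump}, which is a \emph{stronger} assumption than pointwise $C^0$-closeness; so morally the Lipschitz Sandwich Theorem should follow from the pointwise one once we do the bookkeeping. More precisely, I would first reduce to a single function: set $\theta := \psi - \vph = (\theta^{(0)},\dots,\theta^{(k)})$. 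Using the \emph{Lipschitz Nesting} Lemma \ref{lip_k_spaces_nested_lemma} and linearity, $\theta \in \Lip(\gamma,\Sigma,W)$ with $\|\theta\|_{\Lip(\gamma,\Sigma,W)} \leq K_1 + K_2$, and the conclusion \eqref{lip_k_cover_thm_conc} becomes $\|\theta_{[q]}\|_{\Lip(\eta,\Sigma,W)} \leq \ep$ under the hypothesis that $\|\theta^{(l)}(x)\|_{\cl(V^{\otimes l};W)} \leq \ep_0$ for all $x \in B$ and all $l$, together with the $\de_0$-cover property \eqref{lip_k_cover_thm_cover_sigma}.

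The core of the argument is then to run the machinery already set up in Sections \ref{lip_k_functions}--\ref{local_lip_bds_sec}. First I would fix an arbitrary point $p \in \Sigma$; by the cover hypothesis there is $x_p \in B$ with $\|p - x_p\|_V \leq \de_0$. The strategy is to control $\theta^{(l)}(p)$ for $0 \leq l \leq q$ in terms of $\theta^{(\cdot)}(x_p)$ (which are all small, of size $\ep_0$) plus an error controlled by $\|\theta\|_{\Lip(\gamma,\Sigma,W)} \leq K_1+K_2$ times a positive power of $\de_0$. Concretely, the Taylor-type identity \eqref{lip_k_tay_expansion} gives
\[
    \theta^{(l)}(p) = \sum_{s=0}^{k-l} \frac{1}{s!}\,\theta^{(l+s)}(x_p)\bigl[\,\cdot\,\otimes (p - x_p)^{\otimes s}\bigr] + R^{\theta}_l(x_p, p),
\]
and the remainder obeys $\|R^{\theta}_l(x_p,p)\|_{\cl(V^{\otimes l};W)} \leq (K_1+K_2)\|p - x_p\|_V^{\gamma - l} \leq (K_1+K_2)\de_0^{\gamma - l}$. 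Since $l \leq q < \eta < \gamma$, the exponent $\gamma - l$ is strictly positive (indeed $\gamma - l \geq \gamma - q > 0$), so this remainder is $O(\de_0^{\gamma-q})$; the finite sum contributes at most $e\,\ep_0$ using $\de_0 \leq 1$ and $\|\theta^{(l+s)}(x_p)\| \leq \ep_0$. Hence $\|\theta^{(l)}(p)\|_{\cl(V^{\otimes l};W)} \leq e\,\ep_0 + (K_1+K_2)\de_0^{\gamma - q}$ for every $p \in \Sigma$ and $l \leq q$, which handles the pointwise bounds \eqref{lip_k_bdd} required in the $\Lip(\eta,\Sigma,W)$ norm of $\theta_{[q]}$.

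What remains is the harder part: bounding the $\Lip(\eta)$ \emph{remainder terms} $R^{\theta_{[q]}}_l(x,y)$ for $x,y \in \Sigma$, i.e.\ establishing \eqref{lip_k_remain_holder} at level $\eta$ for $\theta_{[q]}$. Here I expect the main obstacle, because one cannot simply differentiate the pointwise bound. The plan is a two-regime (near/far) split on $\|y - x\|_V$. When $\|y-x\|_V$ is bounded below by a fixed threshold depending on $\de_0$, the required bound $\|R^{\theta_{[q]}}_l(x,y)\| \leq \ep\,\|y-x\|_V^{\eta-l}$ follows crudely from the pointwise bounds on the $\theta^{(j)}$ just obtained (each term in the definition of $R$ is small, and $\|y-x\|_V^{\eta-l}$ is bounded below). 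When $\|y-x\|_V$ is small, I would instead invoke the local estimates of Section \ref{local_lip_bds_sec} — in particular Lemmas \ref{lip_k_lip_norm_ball_bound_lemma} and \ref{lip_k_func_ext_lemma_gen_eta}, which bound the $\Lip(\eta)$-norm of a $\Lip(\gamma)$ function over a small ball around a point $p \in \Sigma$ in terms of the value of the function at $p$ (here of size $\lesssim \ep_0 + (K_1+K_2)\de_0^{\gamma-q}$) and $\|\theta\|_{\Lip(\gamma)} \leq K_1+K_2$. This is exactly where the condition $\eta < \gamma$ is used: the "trade-off" estimates gain a factor $\de_0^{\gamma - \eta}$ (or a comparable positive power) on the high-order part, which is what lets a large $\Lip(\gamma)$-norm still produce a small $\Lip(\eta)$-norm once $\de_0$ is small. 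Finally I would choose $\ep_0$ and $\de_0$ explicitly: pick $\ep_0$ so that the $\ep_0$-contributions total at most $\ep/2$, then pick $\de_0 \leq 1$ small enough (depending on $\ep, K_1+K_2, \gamma, \eta$) that the $(K_1+K_2)\,\de_0^{\gamma-\eta}$- and $(K_1+K_2)\de_0^{\gamma-q}$-type terms total at most $\ep/2$; patching the near- and far-regime bounds via the triangle inequality gives \eqref{lip_k_cover_thm_conc}. The one genuinely delicate point is making the near-regime argument globally consistent — the local ball estimates are naturally stated around a single centre, so to get a bound on $R^{\theta_{[q]}}_l(x,y)$ for \emph{all} nearby pairs $x,y$ one must either cover $\Sigma$ by such balls and compare overlapping estimates, or (cleaner) feed the whole of $\theta$ restricted to a neighbourhood of $B$ into the Pointwise Lipschitz Sandwich Theorem \ref{lip_k_cover_C0_thm} and let that theorem's conclusion supply the uniform $\Lip(\eta)$ bound directly.
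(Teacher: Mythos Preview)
Your plan is essentially the paper's: reduce to $\theta=\psi-\vph$, propagate the pointwise smallness from $B$ to all of $\Sigma$ via the Taylor expansion \eqref{lip_k_tay_expansion} (this is exactly the content of the \emph{Pointwise Lipschitz Sandwich Theorem} \ref{lip_k_cover_C0_thm}), then control the $\Lip(\eta)$ remainder terms by a near/far split, using the local Lipschitz bounds of Section \ref{local_lip_bds_sec} in the near regime and the derived pointwise bounds in the far regime. The paper organises the near regime slightly differently: it first proves the \emph{Single-Point Lipschitz Sandwich Theorem} \ref{lip_k_ball_estimates_thm} (which is precisely the packaging of Lemmas \ref{lip_k_lip_norm_ball_bound_lemma} and \ref{lip_k_func_ext_lemma_gen_eta} you invoke), applies it centred at each $z\in B$ with radius $2\de_0$, and then patches; the cover radius is halved so that any pair $x,y$ with $\|x-y\|_V\le\de_0$ lands in a common ball $\ovB_V(z,2\de_0)$. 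In the far regime the paper expands $\theta^{(l)}(w)$ and $\theta^{(l+s)}(p)$ around nearby points $z_j,z_i\in B$ (where the hypothesis gives size $\le\ep_0$) rather than using the propagated pointwise bound directly, but this is only a cosmetic difference.

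Two small corrections. First, your variant of the near regime---centring the ball of Lemmas \ref{lip_k_lip_norm_ball_bound_lemma}/\ref{lip_k_func_ext_lemma_gen_eta} at the arbitrary point $x\in\Sigma$ itself, with the propagated bound $r_0\lesssim e\,\ep_0+(K_1+K_2)\de_0^{\gamma-k}$ as input---actually \emph{avoids} the patching issue you flag at the end: for each near pair $(x,y)$ the single ball $\ovB_V(x,\rho)$ contains both, and the uniform bound on $r_0$ over $\Sigma$ makes the resulting $\Lip(\eta)$ estimate uniform. (You do need the pointwise bound for all $l\le k$, not just $l\le q$, to feed into those lemmas, but your Taylor argument gives this with exponent $\gamma-k>0$.) Second, your ``cleaner'' alternative---feeding $\theta$ into Theorem \ref{lip_k_cover_C0_thm} to get the $\Lip(\eta)$ bound directly---does not work: that theorem delivers only pointwise estimates \eqref{lip_k_cover_C0_conc}, not remainder-term H\"older bounds, so it cannot supply the $\Lip(\eta)$ norm on its own.
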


\begin{remark}
\label{rmk:K_1&K_2_not_both_zero_A}
The condition 
$(K_1 , K_2) \in \left( \R_{\geq 0} \times \R_{\geq 0} \right) 
\setminus \{(0,0)\}$ is imposed to avoid having 
$K_1 = K_2 = 0$.
Since $K_1 = K_2 = 0$ means that $\psi \equiv 0 \equiv \vph$, 
we see that in this case the content of 
Theorem \ref{lip_k_cover_thm} vacuous.
\end{remark}

\begin{remark}
\label{rmk:trivial_case_B=Sigma}
Assume the notation as in Theorem \ref{lip_k_cover_thm}.
For any $\de > 0$, we have that 
$\Sigma$ is a subset of its own $\de$-fattening 
$\Sigma_{\de} := \left\{ v \in V : \exists p \in \Sigma 
\text{ with } ||v - p||_V \leq \de \right\}$.
Consequently, Theorem \ref{lip_k_cover_thm} is valid for the choice 
$B := \Sigma$.  
For this choice of $B$, Theorem \ref{lip_k_cover_thm} tells us that 
there exists a constant 
$\ep_0 = \ep_0 (\ep,K_1 + K_2,\gamma,\eta) > 0$
for which the following is true. 
If $\psi = \left( \psi^{(0)} , \ldots , \psi^{(k)} \right), 
\vph = \left(\vph^{(0)} ,\ldots ,\vph^{(k)}\right) 
\in \Lip(\gamma,\Sigma,W)$ with 
$||\psi||_{\Lip(\gamma,\Sigma,W)} \leq K_1$ and
$||\vph||_{\Lip(\gamma,\Sigma,W)} \leq K_2$ satisfy, 
for every $x \in \Sigma$ and every $l \in \{0, \ldots ,k\}$,
that 
$\left|\left| \psi^{(l)}(x) - \vph^{(l)}(x) 
\right|\right|_{\cl(V^{\otimes l};W)} \leq \ep_0$, 
then we may in fact conclude that 
$||\psi_{[q]} - \phi_{[q]}||_{\Lip(\eta,\Sigma,W)} \leq \ep$
where 
$\psi_{[q]}:= \left(\psi^{(0)},\ldots ,\psi^{(q)}\right)$
and $\vph_{[q]}:= 
\left(\vph^{(0)} , \ldots , \vph^{(q)}\right)$.
\end{remark}

\begin{remark}
\label{lip_sand_thm_alt_phrasing_rmk}
Using the same notation as in Theorem \ref{lip_k_cover_thm},
by taking $\vph \equiv 0$ we may conclude from Theorem 
\ref{lip_k_cover_thm} that if
$\psi = \left( \psi^{(0)} , \ldots , \psi^{(k)} \right) 
\in \Lip(\gamma,\Sigma,W)$ satisfies both that
$||\psi||_{\Lip(\gamma,\Sigma,W)} \leq K_1$ and, 
for every $l \in \{0, \ldots , k\}$ and every $x \in B$, 
that $\left|\left| \psi^{(l)}(x) 
\right|\right|_{\cl(V^{\otimes l};W)} \leq \ep_0$, 
then we have that 
$\left|\left| \psi_{[q]} \right|\right|_{\Lip(\eta,\Sigma,W)} 
\leq \ep$. 
\end{remark}

\begin{remark}
\label{lip_k_sand_thm_B_close_cond_rmk}
Using the same notation as in Theorem 
\ref{lip_k_cover_thm}, the estimates 
\eqref{lip_k_sand_thm_B_close_assump} throughout $B$ 
are a weaker condition than 
$||\psi - \vph||_{\Lip(\gamma,B,W)} \leq \ep_0$.
The bound 
$||\psi - \vph||_{\Lip(\gamma,B,W)} \leq \ep_0$ 
implies that the pointwise estimates in 
\eqref{lip_k_sand_thm_B_close_assump} are valid. But 
the converse is \textit{not} true since the pointwise 
estimates in \eqref{lip_k_sand_thm_B_close_assump}
alone are insufficient to establish the required 
estimates for the remainder terms associated to the
difference $\psi - \vph$ (cf. Definition 
\ref{lip_k_def}).
\end{remark}

\begin{remark}
\label{lip_k_sand_thm_eta_rest_rmk}
The restriction that $\eta \in (0,\gamma)$ in 
Theorem \ref{lip_k_cover_thm} is necessary; the 
theorem is \textit{false} for $\eta := \gamma$.
As an example, fix $K_0,\ep >0$ 
with $\ep < 2 K_0$, let $\de > 0$ and consider a fixed 
$N \in \Z_{\geq 1}$ for which $1/N < \de$. 
Define $\Sigma := \left\{ 0 , 1/N\right\} \subset \R$ and 
$B := \Sigma \setminus \{1/N\} = \{0\} \subset \R$. 
Then we have that $\Sigma \subset [-\de,\de]$ and so 
$B$ is a $\de$-cover of $\Sigma$ as required in 
\eqref{lip_k_cover_thm_cover_sigma}.
Define $\psi , \vph : \Sigma \to \R $ 
by $\psi(0) := 0$, $\psi(1/N) := K_0/N$ and 
$\vph(0) := 0$, $\vph(1/N) := -K_0/N$.
Then $\psi, \vph \in \Lip(1,\Sigma,\R)$
with $||\psi||_{\Lip(1,\Sigma,W)} =
||\vph||_{\Lip(1,\Sigma,W)} = K_0$ and 
$\psi - \vph \equiv 0$ throughout $B$,  
establishing the validity of the bounds 
\eqref{lip_k_sand_thm_B_close_assump} 
for any $\ep_0 \geq 0$. However
$| (\psi - \vph)(1/N) - (\psi-\vph)(0)|
= 2K_0/N = 2K_0 | 1/N - 0|$, which means that 
$||\psi - \vph||_{\Lip(1,\Sigma,\R)} = 2K_0 > \ep$. 
\end{remark}

\begin{remark}
\label{lip_k_B_cover_thm_ep0_req}
It may initially appear that the Theorem should
be valid for any fixed $\ep_0$ with $\ep_0 < \ep$
by suitably restricting $\de_0$, rather than
having to allow $\ep_0$ to depend on 
$\ep, K_1+K_2, \gamma$ and $\eta$. 
But this is \textit{not} the case. If we only assume 
$\ep_0 < \ep$, then the estimates
in \eqref{lip_k_sand_thm_B_close_assump}
can even be insufficient to establish that 
$||\psi - \vph||_{\Lip(\eta,B,W)} \leq \ep$.
For example, let $\gamma := 1$, $\eta := 1/2$, and 
fix $0 < \ep_0 < \ep < 1 < K_0$ such that
$2\ep_0 K_0 > \ep^2$.
Define $x_0 := 2\ep_0/K_0 > 0$ and
consider $\Sigma = B := \{0,x_0\}$.
Define $\psi , \vph : \Sigma \to \R$ by
$\psi(0) := -\ep_0$, $\psi(x_0) := \ep_0$
and $\vph(0) := 0 =: \vph(x_0)$.
Then $\psi ,\vph \in \Lip(1,\Sigma,\R)$, with 
$||\vph||_{\Lip(1,\Sigma,\R)} = 0$ and 
$||\psi ||_{\Lip(1,\Sigma,\R)} = K_0$. 
Moreover, $|\psi - \vph| =|\psi| \leq \ep_0$
throughout $\Sigma = B$ so that the estimates 
\eqref{lip_k_sand_thm_B_close_assump} are valid.
However we may also compute that
$| (\psi - \vph)(x_0) - (\psi-\vph)(0)| = 2 \ep_0
= 2 \ep_0 \sqrt{1/x_0} \sqrt{| x_0 - 0 |} 
= \sqrt{2 \ep_0 K_0} \sqrt{| x_0 - 0 |}$
so that $||\psi - \vph||_{\Lip(1/2,B,\R)} =
\sqrt{2 \ep_0 K_0} > \ep$.
\end{remark}
\vskip 4pt
\noindent 
The issue described in Remark 
\ref{lip_k_B_cover_thm_ep0_req} is only present
when the cardinality of the subset $B$ is greater 
than $1$, i.e. when $B$ contains at least two 
distinct points. 
When $B$ consists of a single point we can in fact
allow for an arbitrary $\ep_0 \in [0,\ep)$ in 
Theorem \ref{lip_k_cover_thm} 
rather than having to allow $\ep_0$ to depend on 
$\ep, K_1+K_2, \gamma$ and $\eta$.
The precise statement is recorded in the following 
theorem.

\begin{theorem}[\textbf{Single-Point Lipschitz 
Sandwich Theorem}]
\label{lip_k_ball_estimates_thm}
Let $V$ and $W$ be Banach spaces and assume that
the tensor powers of $V$ are all equipped with 
admissible tensor norms (cf. Definition 
\ref{admissible_tensor_norm}).
Assume that $\Sigma \subset V$ is closed and 
non-empty. Let $\ep > 0$, 
$(K_1 , K_2) \in \left( \R_{\geq 0} \times \R_{\geq 0} \right) 
\setminus \{(0,0)\}$, 
$\gamma > \eta > 0$ with $k,q \in \Z_{\geq 0}$ such
that $\gamma \in (k,k+1]$ and $\eta \in (q,q+1]$, and 
$0 \leq \ep_0 < \min \left\{ K_1+K_2 ,\ep\right\}$.
Then there exists a constant
$\de_0 = \de_0 (\ep,\ep_0,K_1+K_2,\gamma,\eta)> 0$
for which the following is true.

Suppose $p \in \Sigma$ and that
$\psi = \left(\psi^{(0)} ,\ldots ,\psi^{(k)}\right)$
and 
$\vph = \left(\vph^{(0)} ,\ldots ,\vph^{(k)}\right)$
are elements in $\Lip(\gamma,\Sigma,W)$ with 
$||\psi||_{\Lip(\gamma,\Sigma,W)} \leq K_1$ and
$||\vph||_{\Lip(\gamma,\Sigma,W)} \leq K_2$.
Further suppose that for every $l \in \{0, \ldots , k\}$
the difference 
$\psi^{(l)}(p) - \vph^{(l)}(p) \in \cl(V^{\otimes l};W)$
satisfies the bound
\beq
    \label{lip_k_sand_thm_B_close_assump_ball}
        \left|\left| \psi^{(l)}(p) - \vph^{(l)}(p) 
        \right|\right|_{\cl(V^{\otimes l};W)} 
        \leq \ep_0.
\eeq
Then we may conclude that
\beq
    \label{lip_k_sing_point_cover_thm_conc}
        \left|\left| \psi_{[q]} - \vph_{[q]} 
        \right|\right|_{\Lip(\eta,
        \ovB_V (p , \de_0) ~\cap~ \Sigma,W)}
        \leq \ep
\eeq
where 
$\psi_{[q]}:= 
\left(\psi^{(0)},\ldots ,\psi^{(q)}\right)$
and $\vph_{[q]}:= 
\left(\vph^{(0)} , \ldots , \vph^{(q)}\right)$.
\end{theorem}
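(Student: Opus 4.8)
The plan is to reduce the two-function statement to a one-function statement by passing to the difference $g := \psi - \vph$, and then to invoke the quantitative local estimates for a single $\Lip(\gamma)$ function recorded in Section~\ref{local_lip_bds_sec} (Lemmas~\ref{lip_k_lip_norm_ball_bound_lemma} and~\ref{lip_k_func_ext_lemma_gen_eta}).

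First I would verify that $g := \left( \psi^{(0)} - \vph^{(0)} , \ldots , \psi^{(k)} - \vph^{(k)} \right)$ is itself an element of $\Lip(\gamma,\Sigma,W)$ with $|| g ||_{\Lip(\gamma,\Sigma,W)} \leq K_1 + K_2$. This is immediate from Definition~\ref{lip_k_def}: the assignment $\psi \mapsto R^{\psi}_j$ in~\eqref{lip_k_tay_expansion} is linear in the tuple $\psi$, so $R^{g}_j = R^{\psi}_j - R^{\vph}_j$, and the triangle inequality together with the given bounds $|| \psi ||_{\Lip(\gamma,\Sigma,W)} \leq K_1$ and $|| \vph ||_{\Lip(\gamma,\Sigma,W)} \leq K_2$ yields both $|| g^{(l)}(x) ||_{\cl(V^{\otimes l};W)} \leq K_1 + K_2$ for $x \in \Sigma$ and $|| R^{g}_l(x,y) ||_{\cl(V^{\otimes l};W)} \leq (K_1+K_2) ||y-x||_V^{\gamma - l}$ for $x , y \in \Sigma$. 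Moreover the hypothesis~\eqref{lip_k_sand_thm_B_close_assump_ball} says precisely that $|| g^{(l)}(p) ||_{\cl(V^{\otimes l};W)} \leq \ep_0$ for every $l \in \{0,\ldots,k\}$, while $g_{[q]} = \psi_{[q]} - \vph_{[q]}$, so the desired conclusion~\eqref{lip_k_sing_point_cover_thm_conc} is equivalent to $|| g_{[q]} ||_{\Lip(\eta , \ovB_V(p,\de_0) \cap \Sigma , W)} \leq \ep$. It therefore suffices to produce $\de_0 = \de_0(\ep,\ep_0,K_1+K_2,\gamma,\eta) > 0$ with this property, which is exactly what the single-function local estimates of Section~\ref{local_lip_bds_sec} deliver when applied to $g$ with $K := K_1 + K_2$; here the hypothesis $\ep_0 < K_1 + K_2$ ensures that the smallness assumption $|| g^{(l)}(p) || \leq \ep_0$ is not already forced by $|| g ||_{\Lip(\gamma,\Sigma,W)} \leq K_1 + K_2$, so that it is genuinely informative.

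To see why such a $\de_0$ exists, fix $\de_0 \in (0,1]$ and $x , y \in \ovB_V(p,\de_0) \cap \Sigma$. Expanding $g$ at the point $p$ via~\eqref{lip_k_tay_expansion}, using $|| g^{(l+s)}(p) || \leq \ep_0$ together with $|| R^{g}_l(p,x) || \leq (K_1+K_2)\de_0^{\gamma-l}$ (note $\gamma - l \geq \gamma - k > 0$), produces a uniform pointwise bound $|| g^{(l)}(x) ||_{\cl(V^{\otimes l};W)} \leq \ep_0 e^{\de_0} + (K_1+K_2)\de_0^{\gamma-k}$ for every $l \in \{0,\ldots,k\}$, which converges to $\ep_0$ as $\de_0 \downto 0$. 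For $l \in \{0,\ldots,q\}$ one then decomposes the $\Lip(\eta)$ remainder of $g_{[q]}$ as
\[
    R^{g_{[q]}}_l(x,y)[v]
    =
    R^{g}_l(x,y)[v]
    +
    \sum_{s=q-l+1}^{k-l} \frac{1}{s!}
    \, g^{(l+s)}(x) \left[ v \otimes (y-x)^{\otimes s} \right]
    \qquad \left( v \in V^{\otimes l} \right).
\]
The first term on the right is the genuine $\Lip(\gamma)$ remainder of $g$, hence is at most $(K_1+K_2) ||y-x||_V^{\gamma-l} \leq (K_1+K_2)(2\de_0)^{\gamma - \eta} ||y-x||_V^{\eta-l}$, which is small because $\gamma > \eta$. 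The second term is controlled by the pointwise bound above and $||y-x||_V \leq 2\de_0$, giving a contribution of at most $\left( \ep_0 e^{\de_0} + (K_1+K_2)\de_0^{\gamma-k} \right) e^{2\de_0} (2\de_0)^{q+1-\eta} ||y-x||_V^{\eta-l}$, where we used $q + 1 \geq \eta$. Consequently $|| g_{[q]} ||_{\Lip(\eta , \ovB_V(p,\de_0) \cap \Sigma , W)} \to \ep_0$ as $\de_0 \downto 0$, and since $\ep_0 < \ep$ by hypothesis, choosing $\de_0$ small enough — quantitatively, in terms of $\ep$, $\ep_0$, $K_1+K_2$, $\gamma$ and $\eta$ — drives it below $\ep$. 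The conclusion then follows from $g_{[q]} = \psi_{[q]} - \vph_{[q]}$.

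I expect the main obstacle to be carrying the estimates of the preceding paragraph through with explicit, rather than merely asymptotic, dependence, so as to extract an honest formula for $\de_0$. The one genuinely delicate point is the borderline case in which $\eta$ is an integer, $\eta = q+1$: there the factor $(2\de_0)^{q+1-\eta}$ degenerates to $1$, and the requisite smallness of the tail sum in the displayed decomposition must be extracted from $||y-x||_V \leq 2\de_0$ being small and from its leading coefficient being at most $\ep_0 < \ep$, rather than from a positive power of $\de_0$. This is precisely the structural reason why a single point $B = \{p\}$ permits an arbitrary $\ep_0 \in [0,\ep)$, in contrast to the multi-point situation of Theorem~\ref{lip_k_cover_thm}, in which distinct points of the cover $B$ may lie at mutual distance comparable to $\ep_0$ and no shrinking of $\de_0$ can compensate (cf. Remark~\ref{lip_k_B_cover_thm_ep0_req}).
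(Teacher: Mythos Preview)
Your approach is correct and shares the paper's opening move --- pass to $g := \psi - \vph$ and work with a single $\Lip(\gamma)$ function whose value at $p$ is small --- but your execution is more direct than the paper's. The paper splits into the cases $\eta \in (k,\gamma)$ and $\eta \in (0,k]$: the first is handled by Lemma~\ref{lip_k_lip_norm_ball_bound_lemma} exactly as you would, but for the second the paper invokes Lemma~\ref{lip_k_func_ext_lemma_gen_eta}, which proceeds by an iterative ``stepping-down'' argument --- repeatedly alternating between Lemma~\ref{lip_k_lip_norm_ball_bound_lemma} and the \textit{Lipschitz Nesting} Lemma~\ref{lip_k_spaces_nested_lemma} to pass from $\Lip(\rho)$ to $\Lip(n)$ to $\Lip(n-1)$ and so on down to $\Lip(q+1)$ --- tracking a recursively defined sequence of bounds $E_{n-s}$. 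Your sketch instead writes the $\Lip(\eta)$ remainder of $g_{[q]}$ directly as the genuine $\Lip(\gamma)$ remainder plus the tail $\sum_{s=q-l+1}^{k-l}$, and bounds each piece in one shot; this is exactly the altered remainder $\tilde R^{\psi}_l$ of Lemma~\ref{lip_k_remainder_terms_pointwise_ests}, estimated without the intermediate scaffolding. Your route is shorter and makes the limiting behaviour $\|g_{[q]}\|_{\Lip(\eta,\Omega,W)} \to \ep_0$ as $\de_0 \downto 0$ transparent; the paper's route, by contrast, produces finer intermediate constants (the $E_{n-s}$) at each integer level, which are not actually needed for the bare existence statement of Theorem~\ref{lip_k_ball_estimates_thm} but may be of independent interest. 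Your identification of the borderline case $\eta = q+1$ is apt: there the tail contributes a term of order $\ep_0$ that no power of $\de_0$ can kill, and it is precisely the hypothesis $\ep_0 < \ep$ that saves the day.
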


\begin{remark}
\label{rmk:K_1&K_2_not_both_zero_B}
The condition 
$(K_1 , K_2) \in \left( \R_{\geq 0} \times \R_{\geq 0} \right) 
\setminus \{(0,0)\}$ is imposed to avoid having 
$K_1 = K_2 = 0$.
Since $K_1 = K_2 = 0$ means that $\psi \equiv 0 \equiv \vph$, 
we see that in this case the content of 
Theorem \ref{lip_k_ball_estimates_thm} vacuous.
\end{remark}

\begin{remark}
\label{single_point_lip_sand_thm_alt_phrasing_rmk}
Using the same notation as in Theorem 
\ref{lip_k_ball_estimates_thm},
by taking $\vph \equiv 0$ we may conclude from Theorem 
\ref{lip_k_ball_estimates_thm} that if
$\psi = \left( \psi^{(0)} , \ldots , \psi^{(k)} \right) 
\in \Lip(\gamma,\Sigma,W)$ satisfies both that
$||\psi||_{\Lip(\gamma,\Sigma,W)} \leq K_1$ and, 
for every $l \in \{0, \ldots , k\}$, 
that $\left|\left| \psi^{(l)}(p) 
\right|\right|_{\cl(V^{\otimes l};W)} \leq \ep_0$, 
then we have that 
$\left|\left| \psi_{[q]} 
\right|\right|_{\Lip(\eta,\ovB_V(p,\de_0) ~\cap~ \Sigma,W)} 
\leq \ep$. 
\end{remark}
\vskip 4pt
\noindent
Establishing Theorem \ref{lip_k_ball_estimates_thm}
will form the first step in our proof of Theorem 
\ref{lip_k_cover_thm}.

Returning our attention to Theorem \ref{lip_k_cover_thm}, 
the Lipschitz estimates obtained in the 
conclusion \eqref{lip_k_cover_thm_conc} yield
pointwise estimates for the difference 
$\psi^{(0)} - \vph^{(0)} : \Sigma \to W$.
In particular, we may conclude that
$\left|\left| \psi^{(0)} - \vph^{(0)} 
\right|\right|_{C^0(\Sigma;W)} \leq \ep$.
However such pointwise estimates can be established 
directly without needing to appeal to Theorem 
\ref{lip_k_cover_thm}. Moreover, this direct approach
allows us to obtain estimates for the difference
$\psi^{(l)} - \vph^{(l)} : \Sigma \to 
\cl(V^{\otimes l};W)$ for every 
$l \in \{0, \ldots , k\}$. An additional benefit is 
that we are able to provide a more explicit constant
$\de_0$ for which we require the subset $B \subset \Sigma$
to be a $\de_0$-cover of $\Sigma$. The precise result is 
recorded in the following theorem.

\begin{theorem}[\textbf{Pointwise Lipschitz Sandwich Theorem}]
\label{lip_k_cover_C0_thm}
Let $V$ and $W$ be Banach spaces, and assume  
that the tensor powers of 
$V$ are all equipped with admissible norms (cf. 
Definition \ref{admissible_tensor_norm}).
Assume that $\Sigma \subset V$ is closed.
Let $\ep > 0$, 
$(K_1 , K_2) \in \left( \R_{\geq 0} \times \R_{\geq 0} \right) 
\setminus \{(0,0)\}$ , $\gamma > 0$ 
with $k \in \Z_{\geq 0}$
such that $\gamma \in (k,k+1]$, and 
$0 \leq \ep_0 < \min \left\{ K_1 + K_2 , \ep \right\}$.
Then given any $l \in \{0, \ldots , k\}$, there exists
a constant $\de_0 = \de_0 (\ep,\ep_0,K_1 + K_2,\gamma,l) > 0$,
defined by
\beq
    \label{lip_k_cover_C0_de0_spec}
        \de_0 := \sup \left\{
        \th > 0 ~:~
        (K_1+K_2) \th^{\gamma - l} + \ep_0 e^{\th} 
        \leq 
        \min \left\{ K_1 + K_2 ~,~ \ep \right\}
        \right\}
        > 0,
\eeq
for which the following is true. 

Suppose $B \subset \Sigma$ is a $\de_0$-cover of 
$\Sigma$ in the sense that 
\beq
    \label{lip_k_cover_C0_cover_sigma}
        \Sigma \subset \bigcup_{x \in B}
        \ovB_V (x, \de_0)
        =
        B_{\de_0} :=
        \left\{ v \in V ~:~
        \text{There exists } z \in B \text{ such that }
        ||v - z||_V \leq \de_0 \right\}. 
\eeq
Suppose 
$\psi = \left(\psi^{(0)} ,\ldots ,\psi^{(k)}\right),
\vph = \left(\vph^{(0)} ,\ldots ,\vph^{(k)}\right) 
\in \Lip(\gamma,\Sigma,W)$ satisfy the 
$\Lip(\gamma,\Sigma,W)$ norm estimates that
$||\psi||_{\Lip(\gamma,\Sigma,W)} \leq K_1$ and 
$||\vph||_{\Lip(\gamma,\Sigma,W)} \leq K_2$.
Further suppose that for every $j \in \{0, \ldots , k\}$
and every $x \in B$ the difference 
$\psi^{(j)}(x) - \vph^{(j)}(x) \in \cl(V^{\otimes j};W)$
satisfies the bound 
\beq
    \label{lip_k_point_sand_thm_B_close_assump}
        \left|\left| \psi^{(j)}(x) - \vph^{(j)}(x) 
        \right|\right|_{\cl(V^{\otimes j};W)} 
        \leq \ep_0.
\eeq
Then we may conclude that for every 
$s \in \{0, \ldots , l\}$ and every $x \in \Sigma$ 
that
\beq
    \label{lip_k_cover_C0_conc}
        \left|\left| \psi^{(s)}(x) - \vph^{(s)}(x) 
        \right|\right|_{\cl(V^{\otimes s};W)}
        \leq \ep.
\eeq
\end{theorem}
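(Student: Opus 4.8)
The plan is to reduce everything to a purely local estimate near a single point of $B$, together with the triangle inequality. First I would fix $x \in \Sigma$ and, using the hypothesis \eqref{lip_k_cover_C0_cover_sigma} that $B$ is a $\de_0$-cover of $\Sigma$, choose a point $z \in B$ with $||x - z||_V \leq \de_0$. Introduce the difference $\chi := \psi - \vph$, so that $\chi = (\chi^{(0)}, \ldots, \chi^{(k)}) = (\psi^{(0)} - \vph^{(0)}, \ldots, \psi^{(k)} - \vph^{(k)})$. Since $\Lip(\gamma,\Sigma,W)$ is a vector space and the norm is sub-additive, $\chi \in \Lip(\gamma,\Sigma,W)$ with $||\chi||_{\Lip(\gamma,\Sigma,W)} \leq ||\psi||_{\Lip(\gamma,\Sigma,W)} + ||\vph||_{\Lip(\gamma,\Sigma,W)} \leq K_1 + K_2$. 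The assumption \eqref{lip_k_point_sand_thm_B_close_assump} says exactly that $||\chi^{(j)}(z)||_{\cl(V^{\otimes j};W)} \leq \ep_0$ for all $j \in \{0,\ldots,k\}$ and all $z \in B$.

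The heart of the argument is then a Taylor-expansion estimate for $\chi^{(s)}(x)$ in terms of the data of $\chi$ at the nearby basepoint $z$. Fix $s \in \{0,\ldots,l\}$ and $v \in V^{\otimes s}$ with $||v||_{V^{\otimes s}} = 1$. I would write, using the definition \eqref{lip_k_tay_expansion} of the remainder term $R^{\chi}_s$,
\[
    \chi^{(s)}(x)[v] = R^{\chi}_s(z,x)[v] + \sum_{t=0}^{k-s} \frac{1}{t!} \chi^{(s+t)}(z)\left[v \otimes (x-z)^{\otimes t}\right].
\]
The remainder term is controlled by \eqref{lip_k_remain_holder}: $||R^{\chi}_s(z,x)||_{\cl(V^{\otimes s};W)} \leq (K_1+K_2) ||x-z||_V^{\gamma - s} \leq (K_1+K_2) \de_0^{\gamma - s}$. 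For the sum, each term is bounded using \eqref{lip_k_point_sand_thm_B_close_assump} together with the admissibility inequality \eqref{ten_prod_unit_norm} (which gives $||v \otimes (x-z)^{\otimes t}||_{V^{\otimes(s+t)}} \leq ||x-z||_V^t$), yielding $\frac{1}{t!} \ep_0 \de_0^t$. Summing over $t$ and crudely bounding $\sum_{t \geq 0} \de_0^t/t! = e^{\de_0}$, and using $\de_0^{\gamma-s} \leq \de_0^{\gamma-l}$ when $\de_0 \leq 1$ (which we may assume, shrinking $\de_0$ if needed — note $\gamma - s \geq \gamma - l > 0$), I get
\[
    \left|\left| \chi^{(s)}(x) \right|\right|_{\cl(V^{\otimes s};W)} \leq (K_1+K_2)\de_0^{\gamma-l} + \ep_0 e^{\de_0}.
\]
The definition \eqref{lip_k_cover_C0_de0_spec} of $\de_0$ is precisely engineered so that the right-hand side is $\leq \min\{K_1+K_2, \ep\} \leq \ep$, which gives \eqref{lip_k_cover_C0_conc}. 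The supremum in \eqref{lip_k_cover_C0_de0_spec} is strictly positive because at $\th = 0$ the left side equals $\ep_0 < \min\{K_1+K_2,\ep\}$ and the left side is continuous and increasing in $\th$, so the inequality persists on a nonempty interval.

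The only genuinely delicate point — really a bookkeeping matter rather than a deep obstacle — is making sure the exponent manipulation $\de_0^{\gamma-s} \leq \de_0^{\gamma-l}$ is legitimate: this needs $\de_0 \leq 1$, and one should check the definition \eqref{lip_k_cover_C0_de0_spec} automatically forces $\de_0 \leq 1$ or else explicitly intersect with $(0,1]$ (and verify the resulting quantity is still positive, which it is by the same continuity argument). I would also double-check the boundary case $s = k$ (where the sum has only the $t=0$ term, giving simply $||\chi^{(k)}(x)|| \leq (K_1+K_2)\de_0^{\gamma-k} + \ep_0 \leq (K_1+K_2)\de_0^{\gamma-l} + \ep_0 e^{\de_0}$ since $k \leq l$ is false in general — actually one needs $l \leq k$, which is given, so $s \leq l \leq k$ and the bound $\de_0^{\gamma-s} \le \de_0^{\gamma-l}$ is what is used). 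Since $x \in \Sigma$ and $s \in \{0,\ldots,l\}$ were arbitrary, this completes the proof.
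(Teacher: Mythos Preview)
Your proof is correct and follows essentially the same approach as the paper. The paper packages the Taylor-expansion estimate you carry out directly into its Lemma~\ref{lip_k_growth_rates_lemma} (applied to $F := \psi - \vph$ with $q := k$), but the underlying computation is identical: expand $\chi^{(s)}(x)$ about the nearby point $z \in B$, bound the remainder by $(K_1+K_2)\de_0^{\gamma-s}$, bound the Taylor sum by $\ep_0 e^{\de_0}$, use $\de_0 \le 1$ to replace $\de_0^{\gamma-s}$ by $\de_0^{\gamma-l}$, and conclude via the defining property of $\de_0$. Your observation that $\de_0 \le 1$ follows automatically from \eqref{lip_k_cover_C0_de0_spec} is also what the paper records; there is no need to intersect with $(0,1]$ by hand.
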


\begin{remark}
\label{rmk:K_1&K_2_not_both_zero_C}
The condition 
$(K_1 , K_2) \in \left( \R_{\geq 0} \times \R_{\geq 0} \right) 
\setminus \{(0,0)\}$ is imposed to avoid having 
$K_1 = K_2 = 0$.
Since $K_1 = K_2 = 0$ means that $\psi \equiv 0 \equiv \vph$, 
we see that in this case the content of 
Theorem \ref{lip_k_ball_estimates_thm} vacuous.
\end{remark}

\begin{remark}
In contrasts to Theorem \ref{lip_k_cover_thm}
we are able to deal with arbitrary
$\ep_0 < \ep$ by suitably restricting $\de_0$.
The issue outlined in Remark 
\ref{lip_k_B_cover_thm_ep0_req} is no longer
a problem in this setting since the same notion of 
closeness is used in both the assumption
\eqref{lip_k_point_sand_thm_B_close_assump} 
and the conclusion \eqref{lip_k_cover_C0_conc}.
\end{remark}

\begin{remark}
\label{rmk:trivial_case_B=Sigma_pt_wise}
Assume the notation as in Theorem \ref{lip_k_cover_C0_thm}.
For any $\de > 0$, we have that 
$\Sigma$ is a subset of its own $\de$-fattening 
$\Sigma_{\de} := \left\{ v \in V : \exists p \in \Sigma 
\text{ with } ||v - p||_V \leq \de \right\}$.
Consequently, Theorem \ref{lip_k_cover_C0_thm} 
is valid for the choice $B := \Sigma$.  
For this choice of $B$, Theorem \ref{lip_k_cover_C0_thm}
recovers the following trivial statement. 
Let $\ep_0 < \min \{ K_1 + K_2 , \ep \}$, 
$l \in \{0, \ldots ,k\}$, and 
$\psi = \left( \psi^{(0)} , \ldots , \psi^{(k)} \right), 
\vph = \left(\vph^{(0)} ,\ldots ,\vph^{(k)}\right) 
\in \Lip(\gamma,\Sigma,W)$. 
Suppose that
$||\psi||_{\Lip(\gamma,\Sigma,W)} \leq K_1$ and
$||\vph||_{\Lip(\gamma,\Sigma,W)} \leq K_2$. 
Further suppose, for every point 
$x \in \Sigma$ and every integer $j \in \{0, \ldots , k\}$, 
that 
$\left|\left| \psi^{(j)}(x) - \vph^{(j)}(x) 
\right|\right|_{\cl(V^{\otimes j};W)} \leq \ep_0$. 
Then we have, for every point $x \in \Sigma$ and every 
$s \in \{0 , \ldots , l\}$, that
$\left|\left| \psi^{(s)}(x) - \vph^{(s)}(x) 
\right|\right|_{\cl(V^{\otimes s};W)} \leq \ep$.
\end{remark}

\begin{remark}
\label{pointwise_lip_sand_thm_alt_phrasing_rmk}
Using the same notation as in Theorem \ref{lip_k_cover_C0_thm},
by taking $\vph \equiv 0$ we may conclude from Theorem 
\ref{lip_k_cover_C0_thm} that if
$\psi = \left( \psi^{(0)} , \ldots , \psi^{(k)} \right) 
\in \Lip(\gamma,\Sigma,W)$ satisfies both that
$||\psi||_{\Lip(\gamma,\Sigma,W)} \leq K_1$ and, 
for every $j \in \{0, \ldots , k\}$ and every $x \in B$, 
that $\left|\left| \psi^{(j)}(x) 
\right|\right|_{\cl(V^{\otimes l};W)} \leq \ep_0$, 
then we have, for every $s \in \{0, \ldots , l\}$ and 
every point $x \in \Sigma$, that 
$\left|\left| \psi^{(s)}(x) \right|\right|_{\cl(V^{\otimes s};W)} 
\leq \ep$. 
\end{remark}

\begin{remark}
\label{lip_k_point_sand_thm_deltas_rmk}
It follows from \eqref{lip_k_cover_C0_de0_spec} 
that, for every $l \in \{0, \ldots , k\}$, the
constant $\de_0 = \de_0(\ep,\ep_0,K_1+K_2,\gamma,l) > 0$ 
is bounded above by $1$. Moreover, if the 
constants $\ep$, $\ep_0$, $K_1+K_2$, and $\gamma$ 
remain fixed, we may conclude that the constant
$\de_0$ specified in \eqref{lip_k_cover_C0_de0_spec} 
is decreasing with respect to the argument 
$l \in \{0, \ldots , k\}$ in the sense that the mapping
$l \mapsto \de_0(\ep,\ep_0,K_0,\gamma,l)$ is 
a decreasing function on $\{0, \ldots , k\}$.
\end{remark}

\section{Cost-Effective Approximation Application}
\label{sec:cost-effect_approx}
In this section we use the results presented in 
Section \ref{main_results_sec} to establish that, when 
the closed subset $\Sigma \subset V$ is compact, 
an element $\psi \in \Lip(\gamma,\Sigma,W)$ can be, 
in some to be detailed sense, well-approximated
using only its values at a finite number
of points in $\Sigma$.
We start with the following corollary of 
the \textit{Lipschitz Sandwich Theorem} \ref{lip_k_cover_thm}
establishing that, when $\Sigma \subset V$ is compact, 
$\psi$ can be well-approximated in the $\Lip(\eta,\Sigma,W)$-norm
sense using only the values of $\psi$ at a finite number of 
points in $\Sigma$.
The precise result is the following corollary.

\begin{corollary}[Consequence of the 
\textit{Lipschitz Sandwich Theorem} \ref{lip_k_cover_thm}]
\label{lip_k_finite_subset_corollary}
Let $V$ and $W$ be Banach spaces, and assume  
that the tensor powers of 
$V$ are all equipped with admissible norms (cf. 
Definition \ref{admissible_tensor_norm}).
Assume that $\Sigma \subset V$ is compact.
Let $\ep > 0$, 
$(K_1 , K_2) \in \left( \R_{\geq 0} \times \R_{\geq 0} \right) 
\setminus \{(0,0)\}$, and $\gamma > \eta > 0$ with 
$k,q \in \Z_{\geq 0}$ such that $\gamma \in (k,k+1]$
and $\eta \in (q,q+1]$.
Let $\de_0 = \de_0 (\ep ,K_1+K_2, \gamma, \eta) > 0$
and $\ep_0 = \ep_0(\ep,K_1+K_2,\gamma,\eta) > 0$
denote the constants arising from Theorem 
\ref{lip_k_cover_thm}, and let
$N = N (\Sigma, \ep, K_1+K_2, \gamma, \eta) \in \Z_{\geq 0}$
denote the $\de_0$-covering number of $\Sigma$.
That is, 
\beq
    \label{lip_k_finite_subset_corollary_cover_num_state}
        N :=
        \min \left\{
        d \in \Z : 
            \text{ There exists } 
            x_1 , \ldots , x_d \in \Sigma 
            \text{ such that } 
            \Sigma \subset \bigcup_{j=1}^d
            \ovB_{V} ( x_j , \de_0)
        \right\}.
\eeq
Then there is a finite subset 
$\Sigma_N = \{ z_1 , \ldots , z_N \} \subset \Sigma$
for which the following is true.

Suppose 
$\psi = \left(\psi^{(0)} ,\ldots ,\psi^{(k)}\right), 
\vph = \left(\vph^{(0)} ,\ldots ,\vph^{(k)}\right)
\in \Lip(\gamma,\Sigma,W)$ satisfy the $\Lip(\gamma,\Sigma,W)$
norm estimates that
$||\psi||_{\Lip(\gamma,\Sigma,W)} \leq K_1$ and 
$||\vph||_{\Lip(\gamma,\Sigma,W)} \leq K_2$.
Further suppose that for every $l \in \{0, \ldots , k\}$
and every $j \in \{1, \ldots , N\}$ the difference 
$\psi^{(l)}(z_j) - \vph^{(l)}(z_j) \in \cl(V^{\otimes l};W)$
satisfies the bound 
\beq
    \label{lip_k_finite_subset_corollary_close_assump}
        \left|\left| \psi^{(l)}(z_j) - \vph^{(l)}(z_j) 
        \right|\right|_{\cl(V^{\otimes l};W)} 
        \leq \ep_0.
\eeq
Then we may conclude that 
\beq
    \label{lip_k_finite_subset_corollary_conc}
        \left|\left| \psi_{[q]} - \vph_{[q]} 
        \right|\right|_{\Lip(\eta,\Sigma,W)}
        \leq \ep
\eeq
where $\psi_{[q]}:= 
\left(\psi^{(0)},\ldots ,\psi^{(q)}\right)$
and $\vph_{[q]}:= 
\left(\vph^{(0)} , \ldots , \vph^{(q)}\right)$.
\end{corollary}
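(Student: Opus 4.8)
The plan is to deduce this statement directly from the \textit{Lipschitz Sandwich Theorem} \ref{lip_k_cover_thm}, applied with the subset $B$ appearing there taken to be a suitable finite subset of $\Sigma$ of cardinality exactly $N$. The point is that, once $\Sigma$ is compact, the $\de_0$-covering number $N$ is finite, and an optimal $\de_0$-cover of $\Sigma$ with centres in $\Sigma$ supplies a finite, hence closed, subset $B \subset \Sigma$ that is a $\de_0$-cover of $\Sigma$ in the precise sense required by \eqref{lip_k_cover_thm_cover_sigma}.

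First I would check that the covering number $N$ defined in \eqref{lip_k_finite_subset_corollary_cover_num_state} is finite. Since $\Sigma$ is compact and $\de_0 > 0$, the open cover $\left\{ \B_V(x,\de_0) : x \in \Sigma \right\}$ of $\Sigma$ admits a finite subcover, and the corresponding closed balls $\ovB_V(x,\de_0)$ then also cover $\Sigma$; hence the collection of admissible $d$ in \eqref{lip_k_finite_subset_corollary_cover_num_state} is non-empty and $N < \infty$. I would then fix points $z_1,\ldots,z_N \in \Sigma$ realising this minimum, so that $\Sigma \subset \bigcup_{j=1}^N \ovB_V(z_j,\de_0)$, and set $\Sigma_N := \{z_1,\ldots,z_N\}$. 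By minimality of $N$ the points $z_1,\ldots,z_N$ are pairwise distinct — otherwise one could discard a repeated centre and cover $\Sigma$ with $N-1$ closed balls — so $\Sigma_N$ has exactly $N$ elements. Being finite, $\Sigma_N$ is a closed subset of $\Sigma$, and by construction it is a $\de_0$-cover of $\Sigma$ in the sense of \eqref{lip_k_cover_thm_cover_sigma}, i.e. $\Sigma \subset \bigcup_{x \in \Sigma_N} \ovB_V(x,\de_0) = (\Sigma_N)_{\de_0}$.

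Finally I would apply Theorem \ref{lip_k_cover_thm} with the choice $B := \Sigma_N$ and the same constants $\ep$, $K_1$, $K_2$, $\gamma$, $\eta$, so that the $\de_0$ and $\ep_0$ furnished by Theorem \ref{lip_k_cover_thm} are exactly those in the statement of the corollary. The hypotheses of Theorem \ref{lip_k_cover_thm} on $\psi$ and $\vph$ — namely $||\psi||_{\Lip(\gamma,\Sigma,W)} \leq K_1$, $||\vph||_{\Lip(\gamma,\Sigma,W)} \leq K_2$, together with the pointwise bounds \eqref{lip_k_sand_thm_B_close_assump} on $B = \Sigma_N$ — are precisely the hypotheses assumed here, with \eqref{lip_k_sand_thm_B_close_assump} coinciding verbatim with \eqref{lip_k_finite_subset_corollary_close_assump} once $B = \Sigma_N = \{z_1,\ldots,z_N\}$. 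The conclusion \eqref{lip_k_cover_thm_conc} of Theorem \ref{lip_k_cover_thm} is then exactly the desired bound \eqref{lip_k_finite_subset_corollary_conc} on $||\psi_{[q]} - \vph_{[q]}||_{\Lip(\eta,\Sigma,W)}$, which completes the argument.

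I do not expect a genuine obstacle here: the entire content of the corollary is already contained in Theorem \ref{lip_k_cover_thm}, and the only steps needing a moment's care are the finiteness of $N$ (which is where compactness of $\Sigma$ is used), the observation that an optimal cover may be taken with distinct centres so that $|\Sigma_N| = N$, and the remark that a finite subset of $\Sigma$ is closed and therefore an admissible choice of $B$ in Theorem \ref{lip_k_cover_thm}.
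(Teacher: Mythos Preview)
Your proposal is correct and follows essentially the same approach as the paper: use compactness to ensure the $\de_0$-covering number $N$ is finite, choose a realising cover $\Sigma_N = \{z_1,\ldots,z_N\}$, and apply Theorem \ref{lip_k_cover_thm} with $B := \Sigma_N$. The extra remarks you include about distinctness of the $z_j$ and closedness of $\Sigma_N$ are not made explicit in the paper but are harmless and accurate.
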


\begin{remark}
\label{rmk:Corollary_one_func_zero}
In a similar spirit to Remark \ref{lip_sand_thm_alt_phrasing_rmk} 
and
using the same notation as in Corollary 
\ref{lip_k_finite_subset_corollary},
by taking $\vph \equiv 0$ we conclude from Corollary 
\ref{lip_k_finite_subset_corollary} that if
$\psi = \left( \psi^{(0)} , \ldots , \psi^{(k)} \right) 
\in \Lip(\gamma,\Sigma,W)$ satisfies both
$||\psi||_{\Lip(\gamma,\Sigma,W)} \leq K_1$ and, 
for every $l \in \{0, \ldots , k\}$ and every $x \in \Sigma_N$, 
that $\left|\left| \psi^{(l)}(x) 
\right|\right|_{\cl(V^{\otimes l};W)} \leq \ep_0$, 
then we have that 
$\left|\left| \psi_{[q]} \right|\right|_{\Lip(\eta,\Sigma,W)} \leq \ep$. 
\end{remark}

\begin{remark}
\label{rmk:finite_subset_determines_behaviour}
When $N \in \Z_{\geq 1}$ defined in 
\eqref{lip_k_finite_subset_corollary_cover_num_state} 
is less than 
the cardinality of $\Sigma$, 
Corollary \ref{lip_k_finite_subset_corollary} guarantees 
that we are able to identify a strictly 
smaller collection of points at which the 
behaviour of a $\Lip(\gamma,\Sigma,W)$ function 
determines the functions $\Lip(\eta)$-behaviour up to an 
arbitrarily small error over the entire set $\Sigma$.
That is, using the notation of Corollary 
\ref{lip_k_finite_subset_corollary},  
if $F \in \Lip(\gamma,\Sigma_N,W)$ then any two 
extensions $\psi$ and $\vph$ of $F$ to 
elements in $\Lip(\gamma,\Sigma,W)$, with 
$\Lip(\gamma,\Sigma,W)$-norms bounded above by $K_1$
and $K_2$ respectively,
can differ, in the $\Lip(\eta)$-sense, by at most 
$\ep$ throughout $\Sigma$.

A particular consequence of this is that a function in 
$\Lip(\gamma,\Sigma,W)$ can be cost-effectively approximated.
That is, let
$\psi = \left( \psi^{(0)} , \ldots , \psi^{(k)} \right) \in 
\Lip(\gamma,\Sigma,W)$ with $||\psi||_{\Lip(\gamma,\Sigma,W)} 
\leq K_1$
and suppose that we want to approximate $\psi$ 
in a $\Lip(\eta,\Sigma,W)$-norm sense.
Then Corollary \ref{lip_k_finite_subset_corollary} guarantees 
us that \textit{any} 
$\vph = \left( \vph^{(0)} , \ldots , \vph^{(k)} \right) 
\in \Lip(\gamma,\Sigma,W)$
with $||\vph||_{\Lip(\gamma,\Sigma,W)} \leq K_2$ will 
satisfy that 
$||\psi_{[q]} - \vph_{[q]} ||_{\Lip(\eta,\Sigma,W)} \leq \ep$ 
provided we have, for every point
$x \in \Sigma_N$ and every $l \in \{0, \ldots , k\}$, that 
$\left|\left| \psi^{(l)}(x) - \vph^{(l)}(x) 
\right|\right|_{\cl(V^{\otimes l};W)} \leq \ep_0$.
Thus the task of approximating $\psi$ throughout $\Sigma$ 
in the $\Lip(\eta,\Sigma,W)$-norm 
sense can be reduced to needing only to approximate $\psi$ 
in a pointwise sense at 
the finite number of points in the subset $\Sigma_N$.
\end{remark}

\begin{remark} 
\label{rmk:explicit_example_1}
We illustrate the content of Remark 
\ref{rmk:finite_subset_determines_behaviour} 
via an explicit example. 
For this purpose, let $\gamma > 0$ with $k \in \Z_{\geq 0}$ 
such that $\gamma \in (k,k+1]$.
Fix a choice of $\eta \in (0,\gamma)$ and let 
$q \in \Z_{\geq 0}$ such that 
$\eta \in (q,q+1]$. 
Consider fixed $K_0 , \ep > 0$ and $d \in \Z_{\geq 1}$.
Take $V := \R^d$ equipped with its usual Euclidean norm 
$||\cdot||_2$, take $\Sigma := [0,1]^d \subset \R^d$ to be the 
unit cube in $\R^d$, and take $W := \R$.
Observe that the norm $||\cdot||_2$ is induced by the usual 
Euclidean dot product $\left< \cdot , \cdot \right>_{\R^d}$
on $\R^d$.
Equip the tensor powers of $\R^d$ with admissible norms 
in the sense of Definition \ref{admissible_tensor_norm} by
extending the inner product $\left< \cdot , \cdot \right>_{\R^d}$
to the tensor powers, and subsequently taking the norm induced
by the resulting inner product on the tensor powers 
(cf. Section \ref{sec:notation}). Introduce the notation,
for $x \in \R^d$ and $r > 0$, that 
$\B^d (x,r) := \left\{ y \in \R^d : ||x-y||_2 < r \right\}$.

Retrieve the constants $\de_0 = \de_0(\ep,K_0,\gamma,\eta) > 0$ 
and $\ep_0 = \ep_0(\ep, K_0, \gamma,\eta) > 0$
arising in Corollary \ref{lip_k_finite_subset_corollary} for 
these choices of $\ep$, $\gamma$, and $\eta$, 
and for both the constants $K_1$ and $K_2$ there are $K_0$ here.
Let $N = N ([0,1]^d, \ep, K_0, \gamma, \eta) \in \Z_{\geq 0}$
denote the $\de_0$-covering number of $[0,1]^d$.
That is, 
\beq
\label{lip_k_finite_subset_corollary_cover_num_state_rmk_version}
        N :=  
        \min \left\{ m \in \Z : 
            \text{ There exists } 
            x_1 , \ldots , x_m \in [0,1]^d
            \text{ such that } 
            [0,1]^d \subset \bigcup_{j=1}^m
            \ovB^d ( x_j , \de_0)
        \right\}.
\eeq
We first claim that $N$ defined in 
\eqref{lip_k_finite_subset_corollary_cover_num_state_rmk_version}
satisfies that
\beq
    \label{cover_num_upper_bound_rmk}
        N \leq \frac{2^d}{\omega_d} 
        \left( 1 + \frac{1}{\de_0} \right)^d
\eeq
where $\omega_d$ denotes the Euclidean volume of the 
unit ball $\B^d(0,1) \subset \R^d$.

To see this, observe that the $\de_0$-covering number 
of $[0,1]^d$ is bounded from 
above by the $\de_0$-packing number of $[0,1]^d$ defined by
\begin{multline}
    \label{packing_number_rmk}
        N_{\pack}(\de_0, [0,1]^d , \R^d) := \\
        \max \left\{ m \in \Z : \text{ There exists } 
        x_1 , \ldots , x_m \in [0,1]^d 
        \text{ such that } ||x_i - x_j ||_2 > \de_0 
        \text{ whenever } i \neq j \right\}.
\end{multline}
Suppose $x_1 , \ldots , x_{N_{\pack}(\de_0,[0,1]^d,\R^d)} 
\in [0,1]^d$ satisfy the condition
specified in \eqref{packing_number_rmk}, i.e. that whenever 
$i,j \in \{1, \ldots , N_{\pack}(\de_0,[0,1]^d,\R^d) \}$ 
with $i \neq j$ we have $||x_i - x_j||_2 > \de_0$.
A consequence of this is that the collection of balls 
$\left\{ \B^d (x_i , \de_0/2) : i \in \left\{1 , \ldots , 
N_{\pack}(\de_0,[0,1]^d,\R^d) \right\} \right\}$ are 
pairwise disjoint. 
Moreover, the disjoint union of this collection of balls
is a subset of the cube $[-\de_0/2 , 1 + \de_0/2 ]^d$.
Hence a volume comparison argument yields that 
$N_{\pack}(\de_0, [0,1]^d , \R^d)$ defined 
in \eqref{packing_number_rmk} satisfies that
\beq
    \label{packing_number_upper_bound}
        N_{\pack}(\de_0, [0,1]^d , \R^d) 
        \leq 
        \frac{2^d}{\omega_d} 
        \left( 1 + \frac{1}{\de_0} \right)^d.
\eeq
The estimate claimed in \eqref{cover_num_upper_bound_rmk} 
is now a consequence of 
$N \leq N_{\pack}(\de_0, [0,1]^d , \R^d)$ and 
\eqref{packing_number_upper_bound}.

Define $m \in \Z_{\geq 1}$ by 
\beq
    \label{int_m_def_rmk}
		m := \min \left\{ n \in \Z : 
        n \geq \frac{2^d}{\omega_d} 
        \left( 1 + \frac{1}{\de_0} \right)^d 
        \right\}.
\eeq
Choose distinct points $x_1, \ldots , x_m \in [0,1]^d$. 
Then, via \eqref{cover_num_upper_bound_rmk} and 
\eqref{int_m_def_rmk}, 
we see that $[0,1]^d \subset \cup_{j=1}^m \ovB^d(x_j,\de_0)$.
Let $\psi = \left( \psi^{(0)} , \ldots , \psi^{(k)} \right) 
\in \Lip(\gamma,[0,1]^d,\R)$ with 
$||\psi||_{\Lip(\gamma,[0,1]^d,\R)} \leq K_0$.
Then Corollary \ref{lip_k_finite_subset_corollary} tells us 
that if $\vph = \left( \vph^{(0)} , \ldots , \vph^{(k)} 
\right) \in \Lip(\gamma,[0,1]^d,\R)$ satisfies both that 
$||\vph||_{\Lip(\gamma,[0,1]^d,\R)} \leq K_0$ and, for every 
$j \in \{1, \ldots , m\}$ and every $l \in \{0, \ldots , k\}$, 
that 
$\left|\left| \psi^{(l)}(x_j) - \vph^{(l)}(x_j) 
\right|\right|_{\cl((\R^d)^{\otimes l};\R)} \leq \ep_0$, 
then 
$||\psi_{[q]} - \vph_{[q]} ||_{\Lip(\eta,[0,1]^d,\R)} \leq \ep$.

Therefore, in order to approximate $\psi$ up to an error 
of $\ep$ in the $\Lip(\eta,[0,1]^d,\R)$-norm sense, we need 
only find 
$\vph = \left( \vph^{(0)} , \ldots , \vph^{(k)} 
\right) \in \Lip(\gamma,[0,1]^d,\R)$ satisfying both that 
$||\vph||_{\Lip(\gamma,[0,1]^d,\R)} \leq K_0$ and, for every 
$j \in \{1, \ldots , m\}$ and every $l \in \{0, \ldots , k\}$, 
that
\beq
    \label{eq:psi_approx_conds}
        \left|\left| \psi^{(l)}(x_j) - \vph^{(l)}(x_j) 
        \right|\right|_{\cl((\R^d)^{\otimes l};\R)} 
        \leq \ep_0.
\eeq
That is, up to an error of magnitude $\ep > 0$, the 
$\Lip(\eta)$-behaviour of $\psi$ throughout the entire cube
$[0,1]^d$ is captured by the pointwise values of $\psi$
at the finite number of points 
$x_1 , \ldots , x_m \in [0,1]^d$, and we have the explicit 
upper bound resulting from \eqref{int_m_def_rmk} 
for the number of points $m$ that are required.
\end{remark}
\vskip 4pt
\noindent
We now provide a short proof of Corollary 
\ref{lip_k_finite_subset_corollary} using the 
\textit{Lipschitz Sandwich Theorem} \ref{lip_k_cover_thm}.

\begin{proof}[Proof of Corollary 
\ref{lip_k_finite_subset_corollary}]
Let $V$ and $W$ be Banach spaces, and assume  
that the tensor powers of 
$V$ are all equipped with admissible norms (cf. 
Definition \ref{admissible_tensor_norm}).
Assume that $\Sigma \subset V$ is compact.
Let $\ep > 0$, $\gamma > \eta > 0$ with 
$k,q \in \Z_{\geq 0}$ such that $\gamma \in (k,k+1]$
and $\eta \in (q,q+1]$, and 
$(K_1,K_2) \in \left( \R_{\geq 0} \times \R_{\geq 0} \right) 
\setminus \{(0,0)\}$.
Retrieve the constants $\de_0 = \de_0 
(\ep, K_1+K_2, \gamma, \eta) > 0$ 
and $\ep_0 = \ep_0(\ep,K_1+K_2,\gamma,\eta) > 0$
arising from Theorem
\ref{lip_k_cover_thm} for these choices
of $\ep$, $K_1$, $K_2$, $\gamma$ and $\eta$.
Note that we are not actually applying Theorem
\ref{lip_k_cover_thm}, but simply 
retrieving constants in preparation for its future
application. Define 
$N = N(\Sigma,\ep,K_1+K_2,\gamma,\eta) \in \Z_{\geq 0}$
to be the $\de_0$-covering number for $\Sigma$.
That is,
\beq
    \label{lip_k_finite_subset_corollary_cover_num}
        N := 
        \min \left\{
        a \in \Z : 
            \text{ There exists } 
            x_1 , \ldots , x_a \in \Sigma 
            \text{ such that } 
            \Sigma \subset \bigcup_{j=1}^a
            \ovB_{V} ( x_j , \de_0)
        \right\}.
\eeq
The compactness of $\Sigma$ ensures that $N$ defined in 
\eqref{lip_k_finite_subset_corollary_cover_num} is finite.
Let $z_1 , \ldots , z_N \in \Sigma$ be any 
collection of $N$ points in $\Sigma$ for which
\beq
    \label{lip_k_finite_subset_corollary_cover_prop}
        \Sigma \subset \bigcup_{j=1}^N 
        \ovB_{V} ( z_j , \de_0 ).
\eeq
Set $\Sigma_N := \{ z_1 , \ldots , z_N \}$.

Let
$\psi = \left(\psi^{(0)} ,\ldots ,\psi^{(k)}\right), 
\vph = \left(\vph^{(0)} ,\ldots ,\vph^{(k)}\right) 
\in \Lip(\gamma,\Sigma,W)$ satisfy the $\Lip(\gamma,\Sigma,W)$
norm estimates that
$||\psi||_{\Lip(\gamma,\Sigma,W)} \leq K_1$ and
$||\vph||_{\Lip(\gamma,\Sigma,W)} \leq K_2$.
Suppose that for every $l \in \{0, \ldots , k\}$
and every $j \in \{1, \ldots , N\}$ the difference 
$\psi^{(l)}(z_j) - \vph^{(l)}(z_j) \in \cl(V^{\otimes l};W)$
satisfies the bound 
\beq
    \label{lip_k_finite_subset_corollary_close_assump_pf}
        \left|\left| \psi^{(l)}(z_j) - \vph^{(l)}(z_j) 
        \right|\right|_{\cl(V^{\otimes l};W)} 
        \leq \ep_0.
\eeq
Then 
\eqref{lip_k_finite_subset_corollary_cover_prop}
and
\eqref{lip_k_finite_subset_corollary_close_assump_pf}
enable us to appeal to Theorem \ref{lip_k_cover_thm}, 
with $B := \Sigma_N$, to conclude 
$|| \psi_{[q]} - \vph_{[q]} ||_{\Lip(\eta,\Sigma,W)} 
\leq \ep$ where $\psi_{[q]}:= 
\left(\psi^{(0)},\ldots ,\psi^{(q)}\right)$
and $\vph_{[q]}:= 
\left(\vph^{(0)} , \ldots , \vph^{(q)}\right)$.
This is precisely the estimate claimed in 
\eqref{lip_k_finite_subset_corollary_conc}. This 
completes the proof of Corollary 
\ref{lip_k_finite_subset_corollary}.
\end{proof}
\vskip 4pt
\noindent
If we weaken the sense in which we aim to approximate 
$\psi$ to the pointwise notion considered in the 
\textit{Pointwise Lipschitz Sandwich Theorem} 
\ref{lip_k_cover_C0_thm}, then we are able to establish 
the following consequence of the
\textit{Pointwise Lipschitz Sandwich Theorem}
\ref{lip_k_cover_C0_thm} when the subset 
$\Sigma \subset V$ is compact.

\begin{corollary}[Consequence of the 
\textit{Pointwise Lipschitz Sandwich Theorem}
\ref{lip_k_cover_C0_thm}]
\label{lip_k_finite_subset_C0_corollary}
Let $V$ and $W$ be Banach spaces, and assume  
that the tensor powers of 
$V$ are all equipped with admissible norms (cf. 
Definition \ref{admissible_tensor_norm}).
Assume that $\Sigma \subset V$ is compact.
Let $\ep, \gamma > 0$, with $k \in \Z_{\geq 0}$
such that $\gamma \in (k,k+1]$, 
$(K_1,K_2) \in \left( \R_{\geq 0} \times \R_{\geq 0} \right) 
\setminus \{(0,0)\}$, and 
$0 \leq \ep_0 < \min \left\{ K_1 + K_2 , \ep\right\}$.
Given $l \in \{0, \ldots , k\}$ 
let $\de_0 = \de_0 (\ep , \ep_0, K_1+K_2, \gamma,l) > 0$
denote the constant arising from Theorem 
\ref{lip_k_cover_C0_thm}
(cf. \eqref{lip_k_cover_C0_de0_spec}). Let
$N = N (\Sigma,\ep,\ep_0,K_1+K_2,\gamma,l) \in \Z_{\geq 0}$
denote the $\de_0$-covering number of $\Sigma$.
That is, 
\beq
    \label{lip_k_finite_subset_C0_corollary_cover_num_state}
        N :=
        \min \left\{
        d \in \Z : 
            \text{ There exists } 
            x_1 , \ldots , x_d \in \Sigma 
            \text{ such that } 
            \Sigma \subset \bigcup_{j=1}^d
            \ovB_{V} ( x_j , \de_0)
        \right\}.
\eeq
Then there is a finite subset
$\Sigma_N = \{ z_1 , \ldots , z_N \} \subset \Sigma$
for which the following is true.

Let 
$\psi = \left(\psi^{(0)} ,\ldots ,\psi^{(k)}\right),
\vph = \left(\vph^{(0)} ,\ldots ,\vph^{(k)}\right) 
\in \Lip(\gamma,\Sigma,W)$ satisfy the $\Lip(\gamma,\Sigma,W)$
norm estimates
$||\psi||_{\Lip(\gamma,\Sigma,W)} \leq K_1$ and
$||\vph||_{\Lip(\gamma,\Sigma,W)} \leq K_2$.
Suppose that for every $i \in \{0, \ldots , k\}$
and every $j \in \{1, \ldots , N\}$ the difference 
$\psi^{(i)}(z_j)-\vph^{(i)}(z_j) \in \cl(V^{\otimes i};W)$
satisfies the bound
\beq
    \label{lip_k_finite_subset_C0_corollary_assump}
        \left|\left| \psi^{(i)}(z_j) - \vph^{(i)}(z_j) 
        \right|\right|_{\cl(V^{\otimes s};W)} 
        \leq \ep_0.
\eeq
Then we may conclude that for every 
$s \in \{0, \ldots , l\}$ and every $x \in \Sigma$ that
\beq
    \label{lip_k_finite_subset_C0_corollary_conc}
        \left|\left| \psi^{(s)}(x) - \vph^{(s)}(x) 
        \right|\right|_{\cl(V^{\otimes s};W)}
        \leq \ep.
\eeq
\end{corollary}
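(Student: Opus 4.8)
The plan is to mimic the proof of Corollary \ref{lip_k_finite_subset_corollary} verbatim, substituting the \textit{Pointwise Lipschitz Sandwich Theorem} \ref{lip_k_cover_C0_thm} for the \textit{Lipschitz Sandwich Theorem} \ref{lip_k_cover_thm}. First I would fix the data $V$, $W$, the compact subset $\Sigma \subset V$, and the parameters $\ep$, $\gamma$, $k$, $(K_1,K_2)$, $\ep_0$, and $l$ exactly as in the statement, and retrieve the constant $\de_0 = \de_0(\ep,\ep_0,K_1+K_2,\gamma,l) > 0$ furnished by Theorem \ref{lip_k_cover_C0_thm} (explicitly given by \eqref{lip_k_cover_C0_de0_spec}). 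At this point I am only recording the numerical value of $\de_0$ in preparation for a later invocation of the theorem, not yet applying it.

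Next I would use the compactness of $\Sigma$ to conclude that the $\de_0$-covering number $N = N(\Sigma,\ep,\ep_0,K_1+K_2,\gamma,l)$ defined in \eqref{lip_k_finite_subset_C0_corollary_cover_num_state} is finite. I would then choose any collection $z_1,\ldots,z_N \in \Sigma$ realising this cover, i.e.\ with $\Sigma \subset \bigcup_{j=1}^N \ovB_V(z_j,\de_0)$, and set $\Sigma_N := \{z_1,\ldots,z_N\}$. By construction $\Sigma_N$ is a $\de_0$-cover of $\Sigma$ in the sense of \eqref{lip_k_cover_C0_cover_sigma}.

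Finally, given any $\psi = \left(\psi^{(0)},\ldots,\psi^{(k)}\right)$ and $\vph = \left(\vph^{(0)},\ldots,\vph^{(k)}\right)$ in $\Lip(\gamma,\Sigma,W)$ with the stated $\Lip(\gamma,\Sigma,W)$-norm bounds and satisfying the pointwise closeness \eqref{lip_k_finite_subset_C0_corollary_assump} at every $z_j \in \Sigma_N$ and every $i \in \{0,\ldots,k\}$, I would simply apply Theorem \ref{lip_k_cover_C0_thm} with $B := \Sigma_N$. Its cover hypothesis \eqref{lip_k_cover_C0_cover_sigma} holds by the previous step, and its closeness hypothesis \eqref{lip_k_point_sand_thm_B_close_assump} is precisely \eqref{lip_k_finite_subset_C0_corollary_assump} (modulo the evident typo in the norm subscript, where $\cl(V^{\otimes s};W)$ should read $\cl(V^{\otimes i};W)$). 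Hence the conclusion \eqref{lip_k_cover_C0_conc} of Theorem \ref{lip_k_cover_C0_thm} gives exactly \eqref{lip_k_finite_subset_C0_corollary_conc}: for every $s \in \{0,\ldots,l\}$ and every $x \in \Sigma$ we have $\left|\left| \psi^{(s)}(x) - \vph^{(s)}(x) \right|\right|_{\cl(V^{\otimes s};W)} \leq \ep$.

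There is essentially no obstacle here; the only two points requiring any care are that compactness is what renders $N$ finite, and that — in contrast to the situation for Corollary \ref{lip_k_finite_subset_corollary}, where the Remark \ref{lip_k_B_cover_thm_ep0_req} phenomenon forces the auxiliary constant $\ep_0$ in the hypothesis to differ from the target error $\ep$ — here the same notion of pointwise closeness with the same $\ep_0$ appears in both the hypothesis and the conclusion, so no such auxiliary constant is needed. The finite subset $\Sigma_N$ depends only on $\Sigma$ and on the listed parameters, via $\de_0$, as claimed in the statement.
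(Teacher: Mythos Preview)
Your proposal is correct and follows essentially the same approach as the paper: retrieve $\de_0$ from Theorem~\ref{lip_k_cover_C0_thm}, use compactness of $\Sigma$ to ensure the $\de_0$-covering number $N$ is finite, fix a realising cover $\Sigma_N = \{z_1,\ldots,z_N\}$, and then apply Theorem~\ref{lip_k_cover_C0_thm} with $B := \Sigma_N$. The paper's proof is structurally identical, including the remark that $\de_0$ is merely being retrieved in preparation rather than the theorem being applied at that stage.
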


\begin{remark}
\label{rmk:pointwise_corollary_one_func_zero}
In a similar spirit to Remark 
\ref{pointwise_lip_sand_thm_alt_phrasing_rmk}, 
and using the same notation as in Corollary 
\ref{lip_k_finite_subset_C0_corollary},
by taking $\vph \equiv 0$ we may conclude from Corollary
\ref{lip_k_finite_subset_C0_corollary} that if
$\psi = \left( \psi^{(0)} , \ldots , \psi^{(k)} \right) 
\in \Lip(\gamma,\Sigma,W)$ satisfies both that
$||\psi||_{\Lip(\gamma,\Sigma,W)} \leq K_1$ and, 
for every $j \in \{0, \ldots , k\}$ and every $x \in \Sigma_N$, 
that $\left|\left| \psi^{(j)}(x) 
\right|\right|_{\cl(V^{\otimes l};W)} \leq \ep_0$, 
then we have, for every $s \in \{0, \ldots , l\}$ and 
every point $x \in \Sigma$, that 
$\left|\left| \psi^{(s)}(x) \right|\right|_{\cl(V^{\otimes s};W)} 
\leq \ep$.
\end{remark}

\begin{remark}
\label{rmk:pointwise_behaviour_finite_subset}
When $N \in \Z_{\geq 1}$ defined in 
\eqref{lip_k_finite_subset_C0_corollary_cover_num_state} 
is less than the cardinality of $\Sigma$, 
Corollary \ref{lip_k_finite_subset_C0_corollary} 
guarantees that we are able to identify a strictly 
smaller collection of points $\Sigma_N$ such that the 
behaviour of a $\Lip(\gamma,\Sigma,W)$ function 
$F = \left( F^{(0)} , \ldots , F^{(k)}\right)$
on $\Sigma_N$ determines the pointwise behaviour of 
$F_{[l]} = \left( F^{(0)} , \ldots , F^{(l)} \right)$
over the entire set $\Sigma$ up to an arbitrarily small 
error. That is, using the notation of Corollary 
\ref{lip_k_finite_subset_C0_corollary},  
if $F \in \Lip(\gamma,\Sigma_N,W)$ and 
$\psi = \left(\psi^{(0)} ,\ldots ,\psi^{(k)}\right)$ and
$\vph = \left(\vph^{(0)} ,\ldots ,\vph^{(k)}\right)$ are 
both extensions of $F$ to $\Lip(\gamma,\Sigma,W)$, with 
$\Lip(\gamma,\Sigma,W)$-norms bounded above by $K_1$ and 
$K_2$ respectively,
then for every $s \in \{0, \ldots , l\}$ the functions
$\psi^{(l)}$ and $\vph^{(l)}$ may only differ, in the
pointwise sense, by at most $\ep$ throughout $\Sigma$.

Similarly to Remark \ref{rmk:finite_subset_determines_behaviour},
a particular consequence is that a function in 
$\Lip(\gamma,\Sigma,W)$ can be cost-effectively approximated 
in a pointwise sense.
That is, let
$\psi = \left( \psi^{(0)} , \ldots , \psi^{(k)} \right) \in 
\Lip(\gamma,\Sigma,W)$ with $||\psi||_{\Lip(\gamma,\Sigma,W)} 
\leq K_1$
and suppose, for some $l \in \{0, \ldots ,k\}$, 
that we want to approximate the functions 
$\psi^{(0)} , \ldots , \psi^{(l)}$ throughout $\Sigma$ in 
a pointwise sense.
Then Corollary \ref{lip_k_finite_subset_C0_corollary} guarantees 
that \textit{any} 
$\vph = \left( \vph^{(0)} , \ldots , \vph^{(k)} \right) 
\in \Lip(\gamma,\Sigma,W)$
with $||\vph||_{\Lip(\gamma,\Sigma,W)} \leq K_2$ will 
satisfy, for every $x \in \Sigma$ and every 
$s \in \{0 , \ldots , l \}$, that 
$\left|\left|\psi^{(s)}(x) - \vph^{(s)}(x) 
\right|\right|_{\cl(V^{\otimes s};W)} \leq \ep$ provided 
we have, for every point
$x \in \Sigma_N$ and every $j \in \{0, \ldots , k\}$, that 
$\left|\left| \psi^{(j)}(x) - \vph^{(j)}(x) 
\right|\right|_{\cl(V^{\otimes j};W)} \leq \ep_0$.
Thus the task of approximating the functions 
$\psi^{(0)} , \ldots , \psi^{(l)}$ throughout $\Sigma$ 
in a pointwise sense can be reduced to needing only 
to approximate $\psi$ in a pointwise sense at 
the finite number of points in the subset $\Sigma_N$.
\end{remark}

\begin{remark}
\label{rmk:explicit_example_two}
We illustrate the content of Remark 
\ref{rmk:pointwise_behaviour_finite_subset} 
via an explicit example. 
The explicit example is in the same setting considered in 
Remark \ref{rmk:explicit_example_1}.
Let $d \in \Z_{\geq 1}$, take
$V := \R^d$ equipped with its usual Euclidean norm 
$||\cdot||_2$, take $\Sigma := [0,1]^d \subset \R^d$ to be the 
unit cube in $\R^d$, and take $W := \R$.
Observe that the norm $||\cdot||_2$ is induced by the usual 
Euclidean dot product $\left< \cdot , \cdot \right>_{\R^d}$
on $\R^d$.
Equip the tensor powers of $\R^d$ with admissible norms 
in the sense of Definition \ref{admissible_tensor_norm} by
extending the inner product $\left< \cdot , \cdot \right>_{\R^d}$
to the tensor powers, and subsequently taking the norm induced
by the resulting inner product on the tensor powers 
(cf. Section \ref{sec:notation}). As introduced in Remark 
\ref{rmk:explicit_example_1}, we use the notation, 
for $x \in \R^d$ and $r > 0$, that 
$\B^d (x,r) := \left\{ y \in \R^d : ||x-y||_2 < r \right\}$.

Let $\gamma > 0$ with $k \in \Z_{\geq 0}$ 
such that $\gamma \in (k,k+1]$.
Consider fixed $K_0 , \ep > 0$, $l \in \{0 , \ldots , k\}$,
and $0 \leq \ep_0 < \min \{ 2K_0 , \ep \}$.
Retrieve the constant $\de_0 = \de_0(\ep,\ep_0, K_0, \gamma,l) > 0$
arising in Corollary \ref{lip_k_finite_subset_C0_corollary} 
for these choices of $\ep$, $\ep_0$, $\gamma$, and $l$, 
and for both the constants $K_1$ and $K_2$ there as $K_0$ here.
Let $N = N([0,1]^d, \ep, \ep_0, K_0 , \gamma, l) \in \Z_{\geq 0}$
denote the $\de_0$-covering number of $[0,1]^d$. That is, 
\beq
    \label{rmk_explicit_example_2_cover_num_state}
        N :=  
        \min \left\{ m \in \Z : 
            \text{ There exists } 
            x_1 , \ldots , x_m \in [0,1]^d
            \text{ such that } 
            [0,1]^d \subset \bigcup_{j=1}^m
            \ovB^d ( x_j , \de_0)
        \right\}.
\eeq
Following the method used in Remark \ref{rmk:explicit_example_1} 
to obtain \eqref{cover_num_upper_bound_rmk} verbatim enables us 
to conclude that
\beq
    \label{cover_num_upper_bound_rmk_2}
        N \leq \frac{2^d}{\omega_d} 
        \left( 1 + \frac{1}{\de_0} \right)^d
\eeq
where $\omega_d$ denotes the Euclidean volume of the 
unit ball $\B^d(0,1) \subset \R^d$.
Define $m \in \Z_{\geq 1}$ by 
\beq
    \label{int_m_def_rmk_2}
        m := \min \left\{ n \in \Z : 
        n \geq \frac{2^d}{\omega_d} 
        \left( 1 + \frac{1}{\de_0} \right)^d 
        \right\}.
\eeq
Choose distinct points $x_1, \ldots , x_m \in [0,1]^d$. 
Then, via \eqref{cover_num_upper_bound_rmk_2} and 
\eqref{int_m_def_rmk_2}, 
we see that $[0,1]^d \subset \cup_{j=1}^m \ovB^d(x_j,\de_0)$.
Let $\psi = \left( \psi^{(0)} , \ldots , \psi^{(k)} \right) 
\in \Lip(\gamma,[0,1]^d,\R)$ with 
$||\psi||_{\Lip(\gamma,[0,1]^d,\R)} \leq K_0$.
Then Corollary \ref{lip_k_finite_subset_C0_corollary} tells us 
that if $\vph = \left( \vph^{(0)} , \ldots , \vph^{(k)} 
\right) \in \Lip(\gamma,[0,1]^d,\R)$ satisfies both that 
$||\vph||_{\Lip(\gamma,[0,1]^d,\R)} \leq K_0$ and, for every 
$i \in \{1, \ldots , m\}$ and every $j \in \{0, \ldots , k\}$, 
that 
$\left|\left| \psi^{(j)}(x_i) - \vph^{(j)}(x_i) 
\right|\right|_{\cl((\R^d)^{\otimes j};\R)} \leq \ep_0$, 
then, for every $x \in [0,1]^d$ and every 
$s \in \{0, \ldots , l\}$, we have
$\left|\left|\psi^{(s)}(x) - \vph^{(s)}(x) 
\right|\right|_{\cl((\R^d)^{\otimes s};\R)} \leq \ep$.

Therefore, in order to approximate 
$\psi^{(0)} , \ldots , \psi^{(l)}$ up to an error 
of $\ep$ in a pointwise sense, we need only find 
$\vph = \left( \vph^{(0)} , \ldots , \vph^{(k)} 
\right) \in \Lip(\gamma,[0,1]^d,\R)$ satisfying both that 
$||\vph||_{\Lip(\gamma,[0,1]^d,\R)} \leq K_0$ and, for every 
$i \in \{1, \ldots , m\}$ and every $j \in \{0, \ldots , k\}$, 
that
\beq
    \label{eq:psi_approx_conds_2}
        \left|\left| \psi^{(j)}(x_i) - \vph^{(j)}(x_i) 
        \right|\right|_{\cl((\R^d)^{\otimes j};\R)} 
        \leq \ep_0.
\eeq
That is, up to an error of magnitude $\ep > 0$, the 
pointwise behaviour of the functions 
$\psi^{(0)} , \ldots , \psi^{(k)}$ throughout the entire
cube $[0,1]^d$ is captured by their pointwise values 
at the finite number of points 
$x_1 , \ldots , x_m \in [0,1]^d$, and we have the explicit 
upper bound resulting from \eqref{int_m_def_rmk_2} 
for the number of points $m$ that are required.
\end{remark}
\vskip 4pt
\noindent
We end this section with a short proof of Corollary 
\ref{lip_k_finite_subset_C0_corollary}.

\begin{proof}[Proof of Corollary 
\ref{lip_k_finite_subset_C0_corollary}]
Let $V$ and $W$ be Banach spaces, and assume  
that the tensor powers of 
$V$ are all equipped with admissible norms (cf. 
Definition \ref{admissible_tensor_norm}).
Assume that $\Sigma \subset V$ is compact.
Let $\ep, \gamma > 0$, with $k \in \Z_{\geq 0}$
such that $\gamma \in (k,k+1]$, 
$(K_1,K_2) \in \left( \R_{\geq 0}  \times \R_{\geq 0} \right) 
\setminus \{(0,0)\}$, and 
$0 \leq \ep_0 < \min \left\{ K_1+K_2 , \ep\right\}$.
Given $l \in \{0, \ldots , k\}$ let
$\de_0 = \de_0 (\ep , \ep_0, K_1+K_2, \gamma,l) > 0$
denote the constant arising from Theorem
\ref{lip_k_cover_C0_thm} for these choices
of $\ep$, $\ep_0$, $K_1$, $K_2$, $\gamma$, and $l$.
Note that we are not actually applying Theorem
\ref{lip_k_cover_C0_thm}, but simply 
retrieving a constant in preparation for its future
application. Define 
$N = N (\Sigma,\ep,\ep_0,K_1+K_2,\gamma,l) \in \Z_{\geq 0}$
to be the $\de_0$-covering number of $\Sigma$.
That is, 
\beq
    \label{lip_k_finite_subset_C0_corollary_cover_num}
        N := N_{\cov}(\Sigma,V,\de_0) =
        \min \left\{
        a \in \Z : 
            \text{ There exists } 
            x_1 , \ldots , x_a \in \Sigma 
            \text{ such that } 
            \Sigma \subset \bigcup_{j=1}^a
            \ovB_{V} ( x_j , \de_0)
        \right\}.
\eeq
The compactness of $\Sigma$ ensures that the integer $N$ 
defined in \eqref{lip_k_finite_subset_C0_corollary_cover_num}
is finite.
Let $z_1 , \ldots , z_N \in \Sigma$ be any 
collection of $N$ points in $\Sigma$ for which
\beq
    \label{lip_k_finite_subset_C0_corollary_cover_prop}
        \Sigma \subset \bigcup_{j=1}^N 
        \ovB_{V} ( z_j , \de_0 ).
\eeq
Set $\Sigma_N := \{ z_1 , \ldots , z_N \}$.

Now suppose that both
$\psi = \left(\psi^{(0)} ,\ldots ,\psi^{(k)}\right),
\vph = \left(\vph^{(0)} ,\ldots ,\vph^{(k)}\right) 
\in \Lip(\gamma,\Sigma,W)$ have their norms bounded
by $K_1$ and $K_2$ respectively, i.e.
$||\psi||_{\Lip(\gamma,\Sigma,W)} \leq K_1$ and 
$||\vph||_{\Lip(\gamma,\Sigma,W)} \leq K_2$.
Further suppose that for every $i \in \{0, \ldots , k\}$
and every $j \in \{1, \ldots , N\}$ the difference 
$\psi^{(i)}(z_j)-\vph^{(i)}(z_j) \in \cl(V^{\otimes i};W)$
satisfies the bound
\beq
    \label{lip_k_finite_subset_C0_corollary_assump_pf}
        \left|\left| \psi^{(i)}(z_j) - \vph^{(i)}(z_j) 
        \right|\right|_{\cl(V^{\otimes s};W)} 
        \leq \ep_0.
\eeq
Together, 
\eqref{lip_k_finite_subset_C0_corollary_cover_prop}
and 
\eqref{lip_k_finite_subset_C0_corollary_assump_pf}
provide the hypotheses required to allow us to
appeal to Theorem \ref{lip_k_cover_C0_thm} with the 
subset $B$ of that result as the subset $\Sigma_N$ here.
A consequence of doing so is that, for every 
$s \in \{0, \ldots , l\}$ and $x \in \Sigma$, 
we have that
$\left|\left| \psi^{(s)}(x) - \vph^{(s)}(x) 
\right|\right|_{\cl(V^{\otimes s};W)} \leq \ep$
as claimed in 
\eqref{lip_k_finite_subset_C0_corollary_conc}.
This completes the proof of Corollary
\ref{lip_k_finite_subset_C0_corollary}.
\end{proof}

\section{Remainder Term Estimates}
\label{lip_k_functions}
In this section we establish the following remainder term
estimates for a $\Lip(\gamma)$ function which will be 
particularly useful in subsequent sections.

\begin{lemma}[Remainder Term Estimates]
\label{lip_k_remainder_terms_pointwise_ests}
Let $V$ and $W$ be Banach spaces, and assume that the 
tensor powers of $V$ are all equipped with admissible 
norms (cf. Definition \ref{admissible_tensor_norm}).
Assume that $\Gamma \subset V$ is closed.
Let $\rho > 0$ with $n \in \Z_{\geq 0}$ such that
$\rho \in (n,n+1]$, let $\th \in (0, \rho)$, and 
suppose $\psi = ( \psi^{(0)} , 
\ldots , \psi^{(n)} ) \in \Lip(\rho,\Gamma,W)$.
For $l \in \{0, \ldots , n\}$ let 
$R^{\psi}_l : \Gamma \times \Gamma \to 
\cl ( V^{\otimes l} ; W)$ denote the remainder term 
associated to $\psi^{(l)}$ (cf. 
\eqref{lip_k_tay_expansion} in 
Definition \ref{lip_k_def}).
If $\th \in (n,\rho)$ then for every 
$l \in \{0, \ldots , n\}$ we have that for every 
$x,y \in \Gamma$ with $x \neq y$ that 
\beq
    \label{lip_k_remainder_bound_case_one}
        \frac{\left|\left| R^{\psi}_l(x,y) 
        \right|\right|_{\cl(V^{\otimes l};W)}}
        {||y-x||_V^{\th-l}}
        \leq
        \min \left\{ \Diam(\Gamma)^{\rho - \th} ~,~
        G(\rho,\th,l,\Gamma) \right\}
        || \psi ||_{\Lip(\rho,\Gamma,W)}
\eeq
where $G(\rho,\th,l,\Gamma)$ is defined by
\beq
    \label{lip_k_remainder_bound_case_one_W_def_state}
        G(\rho,\th,l,\Gamma) := 
        \inf_{r \in (0, \Diam(\Gamma))}
        \left\{
        \max \left\{ r^{\rho - \th} ~,~
        \frac{1 }{r^{\th-l}}
        \left( 1 + 
        \sum_{s=0}^{n-l} \frac{r^s}{s!} \right)
        \right\} \right\}.
\eeq
If $\th \in (0,n]$ (which is only possible if
$n \geq 1$) then let $q \in \{0, \ldots , n-1\}$ 
be such that $\th \in (q,q+1]$. For each 
$l \in \{0,\ldots,q\}$ let $\tilde{R}^{\psi}_l
: \Gamma \times \Gamma \to \cl(V^{\otimes l};W)$
denote the alteration of the remainder term 
$R^{\psi}_l$ defined for $x,y \in \Gamma$ and 
$v \in V^{\otimes l}$ by
\beq
    \label{lip_k_remainder_bound_alt_remainders}
        \tilde{R}^{\psi}_l(x,y)[v] :=
        R^{\psi}_l(x,y)[v] + 
        \sum_{s=q-l+1}^{n-l} \frac{1}{s!}
        \psi^{l+s}(x)\left[ v \otimes 
        (y-x)^{\otimes s}\right].
\eeq
Then for every $l \in \{0, \ldots , q\}$
and every $x,y \in \Gamma$ with $x \neq y$ we have that
\beq
    \label{lip_k_remainder_bound_case_two}
        \frac{\left|\left| \tilde{R}^{\psi}_l(x,y) 
        \right|\right|_{\cl(V^{\otimes l};W)}}
        {|| y - x ||_V^{\th - l}}
        \leq
        \min \left\{ \Diam(\Gamma)^{\rho - \th} 
        + \sum_{i=q+1}^n \frac{\Diam(\Gamma)^{i-\th} }
        {(i-l)!} ~,~ H(\rho,\th,l,\Gamma) \right\}
        || \psi ||_{\Lip(\rho,\Gamma,W)}
\eeq
where $H(\rho,\th,l,\Gamma)$ is defined by
\beq
    \label{lip_k_remain_bds_case_two_H_def_state}
        H(\rho,\th,l,\Gamma) :=
        \inf_{r \in (0, \Diam(\Gamma))}
        \left\{
        \max \left\{ r^{\rho - \th} + 
        \sum_{i=q+1}^n \frac{r^{i-\th}}{(i-l)!} ~,~
        \frac{1}{r^{\th-l}} \left( 1 + 
        \sum_{s=0}^{q-l} \frac{r^s}{s!} \right)
        \right\} \right\}.
\eeq
\end{lemma}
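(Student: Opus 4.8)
The plan is, for each fixed pair of distinct points $x,y\in\Gamma$ with $t:=\|y-x\|_V\le\Diam(\Gamma)$, to produce two complementary upper bounds for the relevant (altered) remainder and then to optimise how they are combined. One bound will come from the defining remainder estimate \eqref{lip_k_remain_holder} of $\Lip(\rho,\Gamma,W)$, which is small for small $t$ but grows like $t^{\rho-\th}$; the other will come from expanding the (altered) remainder directly via \eqref{lip_k_tay_expansion} and estimating each summand using \eqref{lip_k_bdd} together with admissibility of the tensor norms (so that $\|\psi^{(l+s)}(x)[v\otimes(y-x)^{\otimes s}]\|_W\le\|\psi\|_{\Lip(\rho,\Gamma,W)}\|v\|_{V^{\otimes l}}t^s$), which is controlled for large $t$ but blows up as $t\to 0$. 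Balancing the two at a free threshold $r$ and taking the infimum over $r$ will produce $G$ and $H$; comparing directly against $t\le\Diam(\Gamma)$ will produce the $\Diam(\Gamma)$-dependent terms; taking the minimum will finish each estimate.

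For Case 1, where $\th\in(n,\rho)$ so that no alteration is needed, I would first note that \eqref{lip_k_remain_holder} gives $\|R^\psi_l(x,y)\|_{\cl(V^{\otimes l};W)}\le\|\psi\|_{\Lip(\rho,\Gamma,W)}t^{\rho-l}$, whence dividing by $t^{\th-l}$ and using $t\le\Diam(\Gamma)$ with $\rho-\th>0$ yields the $\Diam(\Gamma)^{\rho-\th}$ half of \eqref{lip_k_remainder_bound_case_one}. Expanding instead via \eqref{lip_k_tay_expansion} and using \eqref{lip_k_bdd} and admissibility gives $\|R^\psi_l(x,y)\|_{\cl(V^{\otimes l};W)}\le\|\psi\|_{\Lip(\rho,\Gamma,W)}(1+\sum_{s=0}^{n-l}t^s/s!)$. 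The key elementary point will be that, because $\th>n\ge l$, every exponent $s-(\th-l)$ with $s\in\{0,\ldots,n-l\}$ is strictly negative, so $t\mapsto t^{-(\th-l)}(1+\sum_{s=0}^{n-l}t^s/s!)$ is strictly decreasing on $(0,\infty)$. Thus, for any fixed $r\in(0,\Diam(\Gamma))$, if $t\le r$ the first bound gives $t^{\rho-\th}\le r^{\rho-\th}$, while if $t>r$ the second bound together with monotonicity gives $t^{-(\th-l)}(1+\sum_s t^s/s!)\le r^{-(\th-l)}(1+\sum_s r^s/s!)$; taking the infimum over $r$ produces the $G(\rho,\th,l,\Gamma)$ half, and the minimum of the two halves is \eqref{lip_k_remainder_bound_case_one}.

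For Case 2, where $\th\in(q,q+1]$ with $q\le n-1$, I would first observe that combining \eqref{lip_k_tay_expansion} with \eqref{lip_k_remainder_bound_alt_remainders} shows the altered remainder to be just the truncated one, namely $\tilde R^\psi_l(x,y)[v]=\psi^{(l)}(y)[v]-\sum_{s=0}^{q-l}\frac1{s!}\psi^{(l+s)}(x)[v\otimes(y-x)^{\otimes s}]$. Arguing on this truncated expression exactly as in Case 1 gives $\|\tilde R^\psi_l(x,y)\|_{\cl(V^{\otimes l};W)}\le\|\psi\|_{\Lip(\rho,\Gamma,W)}(1+\sum_{s=0}^{q-l}t^s/s!)$, and since $\th>q\ge l$ the map $t\mapsto t^{-(\th-l)}(1+\sum_{s=0}^{q-l}t^s/s!)$ is again strictly decreasing. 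Complementarily, writing $\tilde R^\psi_l(x,y)=R^\psi_l(x,y)+\sum_{s=q-l+1}^{n-l}\frac1{s!}\psi^{(l+s)}(x)[\,\cdot\,\otimes(y-x)^{\otimes s}]$, applying \eqref{lip_k_remain_holder} to the first summand and \eqref{lip_k_bdd} with admissibility to the rest, and reindexing $i=l+s$, gives $\|\tilde R^\psi_l(x,y)\|_{\cl(V^{\otimes l};W)}\le\|\psi\|_{\Lip(\rho,\Gamma,W)}(t^{\rho-l}+\sum_{i=q+1}^n t^{i-l}/(i-l)!)$; dividing by $t^{\th-l}$ gives a quantity increasing in $t$ since $\rho-\th>0$ and $i-\th\ge q+1-\th\ge 0$. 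Combining these exactly as before — the $t\le\Diam(\Gamma)$ comparison for the $\Diam(\Gamma)$-half and the $t\le r$ / $t>r$ split for the $H(\rho,\th,l,\Gamma)$-half, then the infimum over $r$ and the minimum — establishes \eqref{lip_k_remainder_bound_case_two}.

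The only step that is not a routine application of the triangle inequality together with \eqref{lip_k_bdd}, \eqref{lip_k_remain_holder} and admissibility of the tensor norms is the monotonicity of the functions $t\mapsto t^{-(\th-l)}(1+\sum_s t^s/s!)$, and I expect this to be the only real obstacle; it reduces to the arithmetic facts $\th>n$ in Case 1 and $\th>q$ in Case 2, which force all the relevant exponents to be negative, so the claim follows from a one-line differentiation.
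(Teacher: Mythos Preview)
Your proposal is correct and follows essentially the same route as the paper's proof: both split on whether $t=\|y-x\|_V$ lies below or above a threshold $r$, use the defining remainder estimate \eqref{lip_k_remain_holder} in the small-$t$ regime and the termwise expansion via \eqref{lip_k_bdd} in the large-$t$ regime, then optimise over $r$. The only cosmetic difference is that you phrase the key step as monotonicity of $t\mapsto t^{-(\th-l)}(1+\sum_s t^s/s!)$ established by differentiation, whereas the paper argues directly that each exponent $s-(\th-l)$ is negative so $t^{s-(\th-l)}<r^{s-(\th-l)}$ when $t>r$; these are the same observation.
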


\begin{proof}[Proof of Lemma 
\ref{lip_k_remainder_terms_pointwise_ests}]
Let $V$ and $W$ be Banach spaces, and assume that the 
tensor powers of $V$ are all equipped with admissible 
norms (cf. Definition \ref{admissible_tensor_norm}).
Assume that $\Gamma \subset V$ is closed and that 
$\rho > 0$ with $n \in \Z_{\geq 0}$ such that
$\rho \in (n,n+1]$. Suppose that, for 
$l \in \{0 , \ldots , n \}$, we have functions 
$\psi^{(l)} : \Gamma \to \cl( V^{\otimes l} ; W)$
such that $\psi = (\psi^{(0)} ,\ldots ,\psi^{(n)})$
defines an element of $\Lip(\rho, \Gamma, W)$.
For each $l \in \{0, \ldots , n\}$ define
$R_l^{\psi} : \Gamma \times \Gamma \to 
\cl( V^{\otimes l};W)$ for 
$x,y \in \Gamma$ and $v \in V^{\otimes l}$ by
\beq    
    \label{lip_k_remainder_bounds_remainders}
        R^{\psi}_l(x,y)[v] :=
        \psi^{(l)}(y)[v] - \sum_{s=0}^{n-l}
        \frac{1}{s!}\psi^{(l+s)}(x)\left[ v \otimes 
        (y-x)^{\otimes s}\right].
\eeq
We claim that the estimates 
\eqref{lip_k_remainder_bound_case_one} and 
\eqref{lip_k_remainder_bound_case_two} are immediate 
when $||\psi||_{\Lip(\rho,\Gamma,W)} = 0$. 
To see this, note that if 
$||\psi||_{\Lip(\rho,\Gamma,W)} = 0$ then
for each $l \in \{ 0 , \ldots , n\}$ and any 
$x \in \Gamma$ we have that $\psi^{(l)}(x) \equiv 0$
in $\cl(V^{\otimes l};W)$. Conseuqently, for each
$l \in \{0, \ldots , n\}$ and any $x,y \in \Gamma$, 
we have via 
\eqref{lip_k_remainder_bounds_remainders} that  
$R_l^{\psi}(x,y) \equiv 0$ in $\cl(V^{\otimes l};W)$. 
When $\th \in (n,\rho)$, this tells us that the estimate 
\eqref{lip_k_remainder_bound_case_one} 
is true since both sides are zero.

When $ \th \in (0, n]$ (which is only possible if
$n \geq 1$) then, if $q \in \{0, \ldots , n-1\}$ is 
such that $\th \in (q,q+1]$, for each 
$l \in \{0, \ldots , q\}$ and any $x,y \in \Gamma$ we
have via \eqref{lip_k_remainder_bound_alt_remainders}
that the alteration $\tilde{R}_l^{\psi}$ of $R^{\psi}_l$
satisfies that $\tilde{R}_l^{\psi}(x,y) \equiv 0$ in 
$\cl(V^{\otimes l};W)$. Hence the estimate 
\eqref{lip_k_remainder_bound_case_two} is true 
since both sides are again zero.

If $||\psi||_{\Lip(\rho,\Gamma,W)} \neq 0$ then by 
replacing $\psi$ by 
$\psi / ||\psi||_{\Lip(\rho,\Gamma,W)}$ it suffices
to prove the estimates 
\eqref{lip_k_remainder_bound_case_one} and 
\eqref{lip_k_remainder_bound_case_two} under the 
additional assumption that 
$||\psi||_{\Lip(\rho,\Gamma,W)} = 1$.
As a consequence, 
whenever $l \in \{0, \ldots , n\}$ and $x,y \in \Gamma$,
we have the bounds (cf. \eqref{lip_k_bdd} and 
cf. \eqref{lip_k_remain_holder})
\beq
    \label{lip_k_remainder_bounds_bdd}
        (\bI) \quad \left|\left| \psi^{(l)}(x)
        \right|\right|_{\cl(V^{\otimes l}; W)}
        \leq 1 
        \qquad \text{and} \qquad
        (\bII) \quad
        \left|\left| R^{\psi}_l(x,y) 
        \right|\right|_{\cl(V^{\otimes l};W)}
        \leq 
        || y - x ||_V^{\rho - l}.
\eeq
First suppose $\th \in (n,\rho)$ and let 
$l \in \{0, \ldots , n\}$. 
For any $x,y \in \Gamma$ we have that 
\beq
    \label{lip_k_remain_bds_case_one_A}
        \left|\left| R^{\psi}_l(x,y) 
        \right|\right|_{\cl(V^{\otimes l};W)}
        \stackrel{ (\bII) \text{ of }
        \eqref{lip_k_remainder_bounds_bdd}
        }{\leq} 
        || y - x ||_V^{\rho - l}
        =
        || y - x ||_V^{\rho - \th}
        ||y-x||_V^{\th-l}.
\eeq
A first consequence of 
\eqref{lip_k_remain_bds_case_one_A} is 
\beq
    \label{lip_k_remain_bds_case_one_A1}
        \left|\left| R^{\psi}_l(x,y) 
        \right|\right|_{\cl(V^{\otimes l};W)}
        \leq
        \Diam(\Gamma)^{\rho - \th}
        ||y-x||_V^{\th-l}.
\eeq
A second consequence of 
\eqref{lip_k_remain_bds_case_one_A} is that,
for any fixed $r \in (0, \Diam(\Gamma))$,
if $||y-x||_V \leq r$ then 
\beq
    \label{lip_k_remain_bds_case_one_A2}
        \left|\left| R^{\psi}_l(x,y) 
        \right|\right|_{\cl(V^{\otimes l};W)}
        \leq
        r^{\rho - \th}
        ||y-x||_V^{\th-l}.
\eeq
If $||y-x||_V > r$ then we use
\eqref{lip_k_remainder_bounds_remainders} and that
the tensor powers of $V$ are equipped with admissible
norms (cf. Definition \ref{admissible_tensor_norm})
to compute for any $v \in V^{\otimes l}$ that
\begin{align*}
    \left|\left| R^{\psi}_l(x,y)[v] 
    \right|\right|_W 
    &\stackrel{
    \eqref{lip_k_remainder_bounds_remainders}
    }{\leq}
    \left|\left| \psi^{(l)}(y)[v] \right|\right|_W
    +
    \sum_{j=0}^{n-l} \frac{1}{j!}
    \left|\left| \psi^{(l+j)}(x) \left[ 
    v \otimes (y-x)^{\otimes j} \right|
    \right|\right|_W \\
    &\stackrel{ (\bI) \text{ of }
    \eqref{lip_k_remainder_bounds_bdd}
    }{\leq}
    ||v||_{V^{\otimes l}}
    +
    \sum_{j=0}^{n-l}\frac{1}{j!} ||y-x||^j_V
    ||v||_{V^{\otimes l}} 
    \leq 
    r^{-(\th - l)} \left( 1 + 
    \sum_{j=0}^{n-l} \frac{r^j}{j!} 
    \right)
    || y - x ||_V^{\th - l}
    ||v||_{V^{\otimes l}}.
\end{align*}
In the last line we have used that, 
for any $j \in \{0, \ldots , n-l\}$, 
that $||y-x||_V^{j-(\th-l)} < r^{j - (\th-l)}$.
This is itself a consequence of the facts that 
for any $j \in \{0, \ldots , n-l\}$ that 
$j - (\th - l) \leq n - \th < 0$, and that 
$r < || y - x ||_V$.
By taking the supremum over $v \in V^{\otimes l}$
with unit $V^{\otimes l}$ norm, we may conclude that
when $||y-x||_V > r$ we have
\beq
    \label{lip_k_remain_bds_case_one_A3}
        \left|\left| R^{\psi}_l(x,y) 
        \right|\right|_{\cl(V^{\otimes l};W)}
        \leq
        r^{-(\th-l)} \left( 1 + 
        \sum_{s=0}^{n-l} \frac{r^s}{s!} \right)
        ||y-x||_V^{\th-l}.
\eeq
By combining \eqref{lip_k_remain_bds_case_one_A2}
and \eqref{lip_k_remain_bds_case_one_A3} we deduce 
that for every $x,y \in \Gamma$ we have 
\beq
    \label{lip_k_remain_bds_case_one_done_ish}
        \left|\left| R^{\psi}_l(x,y) 
        \right|\right|_{\cl(V^{\otimes l};W)}
        \leq
        \max \left\{ r^{\rho - \th} ~,~
        \frac{1}{r^{\th-l}}
        \left( 1 + 
        \sum_{s=0}^{n-l} \frac{r^s}{s!} \right)
        \right\}
        ||y-x||_V^{\th-l}.
\eeq
Recall that the choice of $r \in (0, \Diam(\Gamma))$
was arbitrary. Consequently we may take the 
infimum over the choice of $r \in (0, \Diam(\Gamma))$
in \eqref{lip_k_remain_bds_case_one_done_ish} to 
obtain that whenever $x \neq y$ we have 
\beq
    \label{lip_k_remain_bds_case_one_doneA}
        \frac{\left|\left| R^{\psi}_l(x,y) 
        \right|\right|_{\cl(V^{\otimes l};W)}}
        {||y-x||_V^{\th-l}}
        \leq
        \inf_{r \in (0, \Diam(\Gamma))}
        \left\{
        \max \left\{ r^{\rho - \th} ~,~
        \frac{1 }{r^{\th-l}}
        \left( 1 + 
        \sum_{s=0}^{n-l} \frac{r^s}{s!} \right)
        \right\} \right\}.
\eeq
If we define
\beq
    \label{lip_k_remainder_bound_case_one_W_def}
        G(\rho,\th,l,\Gamma) := 
        \inf_{r \in (0, \Diam(\Gamma))}
        \left\{
        \max \left\{ r^{\rho - \th} ~,~
        \frac{1 }{r^{\th-l}}
        \left( 1 + 
        \sum_{s=0}^{n-l} \frac{r^s}{s!} \right)
        \right\} \right\},
\eeq
then \eqref{lip_k_remain_bds_case_one_A1} 
and \eqref{lip_k_remain_bds_case_one_doneA}
yield that 
\beq
    \label{lip_k_remainder_bound_case_one_doneB}
        \frac{\left|\left| R^{\psi}_l(x,y) 
        \right|\right|_{\cl(V^{\otimes l};W)}}
        {||y-x||_V^{\th-l}}
        \leq
        \min \left\{ \Diam(\Gamma)^{\rho - \th} ,
        G(\rho,\th,l,\Gamma) \right\}.
\eeq
The arbitrariness of $l \in \{0, \ldots , n\}$
and the points $x,y \in \Gamma$ with $x \neq y$
mean that \eqref{lip_k_remainder_bound_case_one_doneB}
establishes the estimates claimed in
\eqref{lip_k_remainder_bound_case_one} for the case
that $||\psi||_{\Lip(\rho,\Gamma,W)}=1$.

Now assume that $0 < \th \leq n < \rho \leq n +1$ 
which requires $n \geq 1$. Let 
$q \in \{0 , \ldots , n-1\}$ be such that 
$\th \in (q,q+1]$. For each 
$l \in \{0,\ldots,q\}$ let $\tilde{R}^{\psi}_l
: \Gamma \times \Gamma \to \cl(V^{\otimes l};W)$
denote the alteration of the remainder term 
$R^{\psi}_l$ defined for $x,y \in \Gamma$ and 
$v \in V^{\otimes l}$ by
\beq
    \label{lip_k_remainder_bound_alt_remainders_pf}
        \tilde{R}^{\psi}_l(x,y)[v] :=
        R^{\psi}_l(x,y)[v] + 
        \sum_{s=q-l+1}^{n-l} \frac{1}{s!}
        \psi^{(l+s)}(x)\left[ v \otimes 
        (y-x)^{\otimes s}\right].
\eeq
Let $l \in \{0, \ldots , q\}$, $x,y \in \Gamma$ and 
$v \in V^{\otimes l}$. Recalling that the 
tensor powers of $V$ are all equipped with admissible
norms (cf. Definition \ref{admissible_tensor_norm}), 
we may compute that 
\begin{align*}
    \left|\left| \tilde{R}^{\psi}_l(x,y)[v]
    \right|\right|_W 
    &\stackrel{
    \eqref{lip_k_remainder_bound_alt_remainders_pf}
    }{\leq}
    \left|\left|R^{\psi}_l(x,y)[v]
    \right|\right|_W + 
    \sum_{s=q-l+1}^{n-l} \frac{1}{s!}
    \left|\left| \psi^{(l+s)}(x)\left[ v \otimes 
    (y-x)^{\otimes s}\right]
    \right|\right|_W \\
    &\stackrel{
    \eqref{lip_k_remainder_bounds_bdd} 
    }{\leq}
    \left( 
    ||y-x||_V^{\rho - l}
    +
    \sum_{s=q-l+1}^{n-l} \frac{1}{s!} 
    ||y-x||_V^s \right)
    ||v||_{V^{\otimes l}} \\
    &=
    \left( 
    ||y-x||_V^{\rho - \th}
    +
    \sum_{i=q+1}^{n} \frac{1}{(i-l)!} 
    ||y-x||_V^{i-\th} \right)
    ||y-x||_V^{\th - l}
    ||v||_{V^{\otimes l}}.
\end{align*}
By taking the supremum over $v \in V^{\otimes l}$
with unit $V^{\otimes l}$ norm we may conclude that
\beq
    \label{lip_k_remainder_bounds_case_two_=<r}
        \left|\left| \tilde{R}^{\psi}_l(x,y)
        \right|\right|_{\cl(V^{\otimes l};W)}
        \leq
        \left( ||y-x||_V^{\rho - \th} +
        \sum_{i=q+1}^n \frac{||y-x||_V^{i-\th}}{(i-l)!}
        \right) 
        ||y-x||_V^{\th - l}.
\eeq
A first consequence of 
\eqref{lip_k_remainder_bounds_case_two_=<r} is that
\beq
    \label{lip_k_remainder_bounds_case_two_conc1}
        \left|\left| \tilde{R}^{\psi}_l(x,y)
        \right|\right|_{\cl(V^{\otimes l};W)}
        \leq
        \left( \Diam(\Gamma)^{\rho - \th} +
        \sum_{i=q+1}^n \frac{
        \Diam(\Gamma)^{i-\th}}{(i-l)!}
        \right) 
        ||y-x||_V^{\th - l}.
\eeq
Now consider a fixed choice of constant
$r \in (0, \Diam(\Gamma))$.
If $||y-x||_V \leq r$ then a consequence of
\eqref{lip_k_remainder_bounds_case_two_=<r} is that
\beq
    \label{lip_k_remainder_bounds_case_two_conc2}
        \left|\left| \tilde{R}^{\psi}_l(x,y)
        \right|\right|_{\cl(V^{\otimes l};W)}
        \leq
        \left( r^{\rho - \th} +
        \sum_{i=q+1}^n \frac{r^{i-\th}}{(i-l)!}
        \right) ||y-x||_V^{\th - l}.
\eeq
If $||y-x||_V > r$ then we may first observe via
\eqref{lip_k_remainder_bounds_remainders}
and \eqref{lip_k_remainder_bound_alt_remainders_pf}
that for any $v \in V^{\otimes l}$ we have
\beq
    \label{lip_k_remainder_bounds_alt_remain_expan}
        \tilde{R}^{\psi}_l(x,y)[v] =
        \psi^{(l)}(y)[v] - 
        \sum_{s=0}^{q-l} \frac{1}{s!}
        \psi^{(l+s)}(x) \left[ v \otimes
        (y-x)^{\otimes s} \right]. 
\eeq
We may use 
\eqref{lip_k_remainder_bounds_alt_remain_expan} and that
the tensor powers of $V$ are equipped with admissible
norms (cf. Definition \ref{admissible_tensor_norm})
to compute for any $v \in V^{\otimes l}$ that
\begin{align*}
    \left|\left| \tilde{R}^{\psi}_l(x,y)[v] 
    \right|\right|_W 
    &\stackrel{
    \eqref{lip_k_remainder_bounds_alt_remain_expan}
    }{\leq}
    \left|\left| \psi^{(l)}(y)[v] \right|\right|_W
    +
    \sum_{j=0}^{q-l} \frac{1}{j!}
    \left|\left| \psi^{(l+j)}(x) \left[ 
    v \otimes (y-x)^{\otimes j} \right|
    \right|\right|_W \\
    &\stackrel{
    \eqref{lip_k_remainder_bounds_bdd}
    }{\leq} ||v||_{V^{\otimes l}}
    +
    \sum_{j=0}^{q-l}\frac{1}{j!} ||y-x||^j_V
    ||v||_{V^{\otimes l}} 
    \leq 
    r^{-(\th - l)} \left( 1 + 
    \sum_{j=0}^{q-l} \frac{r^j}{j!} 
    \right)
    || y - x ||_V^{\th - l}
    ||v||_{V^{\otimes l}}.
\end{align*}
For the last inequality we have used that, 
for any $j \in \{0, \ldots , q-l\}$, 
that $||y-x||_V^{j-(\th-l)} < r^{j - (\th-l)}$.
This is itself a consequence of the facts that 
for any $j \in \{0, \ldots , q-l\}$ that 
$j - (\th - l) \leq q - \th < 0$, and that 
$r < || y - x ||_V$.
By taking the supremum over $v \in V^{\otimes l}$
with unit $V^{\otimes l}$ norm, we may conclude that
when $||y-x||_V > r$ we have
\beq
    \label{lip_k_remain_bds_case_two_>=r}
        \left|\left| \tilde{R}^{\psi}_l(x,y) 
        \right|\right|_{\cl(V^{\otimes l};W)}
        \leq
        r^{-(\th-l)} \left( 1 + 
        \sum_{s=0}^{q-l} \frac{r^s}{s!} \right)
        ||y-x||_V^{\th-l}.
\eeq
Together \eqref{lip_k_remainder_bounds_case_two_conc2}
and \eqref{lip_k_remain_bds_case_two_>=r} give 
that for every $x,y \in \Gamma$ with $x \neq y$ we have 
\beq
    \label{lip_k_remain_bds_case_two_done_ish}
        \frac{\left|\left| \tilde{R}^{\psi}_l(x,y) 
        \right|\right|_{\cl(V^{\otimes l};W)}}
        {||y-x||_V^{\th-l}}
        \leq
        \max \left\{ r^{\rho - \th} + 
        \sum_{i=q+1}^n \frac{r^{i-\th}}{(i-l)!} ~,~
        \frac{1}{r^{\th-l}} \left( 1 + 
        \sum_{s=0}^{q-l} \frac{r^s}{s!} \right)
        \right\}.
\eeq
Recall that the choice of $r \in (0, \Diam(\Gamma))$
was arbitrary. Consequently we may take the 
infimum over the choice of $r \in (0, \Diam(\Gamma))$
in \eqref{lip_k_remain_bds_case_two_done_ish} to 
obtain that whenever $x \neq y$ we have 
\beq
    \label{lip_k_remain_bds_case_two_doneA}
        \frac{\left|\left| \tilde{R}^{\psi}_l(x,y) 
        \right|\right|_{\cl(V^{\otimes l};W)}}
        {||y-x||_V^{\th-l}}
        \leq
        H(\rho,\th,l,\Gamma)
\eeq
for $H(\rho,\th,l,\Gamma)$ defined by
\beq
    \label{lip_k_remain_bds_case_two_H_def}
        H(\rho,\th,l,\Gamma) :=
        \inf_{r \in (0, \Diam(\Gamma))}
        \left\{
        \max \left\{ r^{\rho - \th} + 
        \sum_{i=q+1}^n \frac{r^{i-\th}}{(i-l)!} ~,~
        \frac{1}{r^{\th-l}} \left( 1 + 
        \sum_{s=0}^{q-l} \frac{r^s}{s!} \right)
        \right\} \right\}.
\eeq
Together \eqref{lip_k_remainder_bounds_case_two_conc1} 
and \eqref{lip_k_remain_bds_case_two_doneA} yield
\beq
    \label{lip_k_remains_bd_case_two_doneB}
        \frac{\left|\left| \tilde{R}^{\psi}_l(x,y) 
        \right|\right|_{\cl(V^{\otimes l};W)}}
        {||y-x||_V^{\th-l}}
        \leq
        \min \left\{ \Diam(\Gamma)^{\rho - \th} 
        + \sum_{i=q+1}^n \frac{\Diam(\Gamma)^{i-\th} }
        {(i-l)!} ~,~ H(\rho,\th,l,\Gamma) \right\}.
\eeq
The arbitrariness of $l \in \{0, \ldots , q\}$
and the points $x,y \in \Gamma$ with $x \neq y$
mean that \eqref{lip_k_remains_bd_case_two_doneB}
establishes the estimates claimed in 
\eqref{lip_k_remainder_bound_case_two} for the case
that $|| \psi ||_{\Lip(\rho,\Gamma,W)}=1$.
This completes the proof of Lemma 
\ref{lip_k_remainder_terms_pointwise_ests}.
\end{proof}

\section{Nested Embedding Property}
\label{nested_embed_sec}
In this section we establish that Lipschitz spaces 
are nested in the following sense. 
Let $V$ and $W$ be Banach spaces and assume that the tensor
powers of $V$ are equipped with admissible tensor norms 
(cf. Definition \ref{admissible_tensor_norm}).
Let $\rho \geq \th > 0$ and $\Gamma \subset V$ be a closed 
subset.
Then $\Lip(\rho,\Gamma,W) \subset \Lip(\th,\Gamma,W)$.
This nesting property is established by Stein in his original 
work \cite{Ste70}, whilst 
Theorem 1.18  in \cite{Bou15} provides a formulation in 
our particular framework.
To elaborate, if we let $\psi \in \Lip(\rho,\Gamma,W)$, 
$q \in \Z_{\geq 0}$ such that 
$\th \in (q,q+1]$, and 
$\psi_{[q]} = (\psi^{(0)},\ldots ,\psi^{(q)} )$,
then $\psi_{[q]} \in \Lip(\th,\Gamma,W)$. But
it is \textit{not} necessarily true that
$|| \psi_{[q]} ||_{\Lip(\th,\Gamma,W)} 
\leq || \psi ||_{\Lip(\rho,\Gamma,W)}$.

For example, consider $\Gamma := [-1,1] \subset \R$
and define functions $\psi^{(0)} : \Gamma \to \R$
and $\psi^{(1)} : \Gamma \to \cl(\R;\R)$ by 
$\psi^{(0)}(x) := x^2$ and $\psi^{(1)}(x)[v] := 2xv$ 
respectively. Let $\psi = (\psi^{(0)} , \psi^{(1)})$. 
Then the associated remainder terms
are $R_0^{\psi}(x,y) := \psi^{(0)}(y) - \psi^{(0)}(x) - 
\psi^{(1)}(x)[y-x] = (y-x)^2$ and 
$R^{\psi}_1(x,y)[v] 
:= \psi^{(1)}(y)[v] - \psi^{(1)}(x)[v]
= 2(y-x)v$.
It follows that $\psi \in \Lip(2,\Gamma,\R)$
with $|| \psi ||_{\Lip(2,\Gamma,\R)} = 2$.
However $R^{\psi}_1 (-1 ,1 )[v] = 4 v = 2\sqrt{2}
| 1 - (-1)|^{\frac{1}{2}} v$ 
and so $||\psi||_{\Lip(3/2,\Gamma,\R)} = 2\sqrt{2}
= \sqrt{2} || \psi ||_{\Lip(2,\Gamma,\R)}$.

In the following \textit{Lipschitz Nesting} Lemma 
\ref{lip_k_spaces_nested_lemma} we provide an 
explicit constant $C \geq 1$ for which the estimate 
$|| \cdot ||_{\Lip(\th,\Gamma,W)} \leq
C || \cdot ||_{\Lip(\rho,\Gamma,W)}$ holds.
The constant $C$ is more finely attuned to the geometry of the
domain $\Gamma$ than the corresponding constant in Theorem 
1.18 in \cite{Bou15}.

\begin{lemma}[Lipschitz Nesting]
\label{lip_k_spaces_nested_lemma}
Let $V$ and $W$ be Banach spaces, and assume that the 
tensor powers of $V$ are all equipped with admissible 
norms (cf. Definition \ref{admissible_tensor_norm}).
Assume that $\Gamma \subset V$ is closed.
Let $\rho > 0$ with $n \in \Z_{\geq 0}$ such that 
$\rho \in (n,n+1]$, and $\th \in (0, \rho)$ with 
$q \in \{0, \ldots , n\}$ such that $\th \in (q,q+1]$.
Suppose that $\psi = (\psi^{(0)} , \ldots , \psi^{(n)})
\in \Lip(\rho, \Gamma,W)$.
Then $\psi_{[q]} = (\psi^{(0)} , \ldots , \psi^{(q)})
\in \Lip(\th,\Gamma,W)$.
Further, if $\th \in (n , \rho)$ then 
we have the estimate that
\beq
    \label{lip_k_nested_norm_est_0}
        \left|\left| \psi_{[q]} 
        \right|\right|_{\Lip(\th,\Gamma,W)}
        \leq
        \max \left\{ 1 ~,~
        \min \left\{ 1 + e ~,~
        \Diam(\Gamma)^{\rho - \th} 
        \right\} \right\}
        || \psi ||_{\Lip(\rho,\Gamma,W)}.
\eeq
If $\th \in (0,n]$ then we have the 
estimate that
\beq
    \label{lip_k_nested_norm_est}
        \left|\left| \psi_{[q]} 
        \right|\right|_{\Lip(\th,\Gamma,W)}
        \leq
        \min \left\{ C_1 ~,~ C_2
        \right\} 
        || \psi ||_{\Lip(\rho, \Gamma,W)}
\eeq
where $C_1 , C_2 > 0$ are constants, depending
only on $\Diam(\Gamma)$, $\rho$, and $\th$,
defined by 
\beq
    \label{lip_k_nested_C1}
        C_1 := 
        \max \left\{ 1 ~,~
        \min \left\{ 1 + e ~,~
        \Diam(\Gamma)^{\rho - \th}
        +
        \sum_{j=q+1}^{n}
        \frac{\Diam(\Gamma)^{j-\th}}{(j-q)!}
        \right\} \right\}
\eeq
and
\beq
    \label{lip_k_nested_C2}
    		C_2 = 
    		\max \left\{ 1 ,
    		\min \left\{ 1 + e,
            \Diam(\Gamma)^{q+1-\th }
            \right\}\right\}
            \left( 1 +  \min \left\{e, 
            \Diam(\Gamma)^{\rho - n}
            \right\} \right)
            \left( 1 + \min \left\{
            e , \Diam(\Gamma) \right\}
            \right)^{n-(q+1)}.
\eeq
Finally, as a consequence of 
\eqref{lip_k_nested_norm_est_0} and 
\eqref{lip_k_nested_norm_est}, 
for any $\th \in (0, \rho)$ we have the estimate
\beq
    \label{lip_k_nested_norm_est_weak}
        \left|\left| \psi_{[q]} 
        \right|\right|_{\Lip(\th,\Gamma,W)}
        \leq 
        (1+e)||\psi||_{\Lip(\rho,\Gamma,W)}.
\eeq
\end{lemma}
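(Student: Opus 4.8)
\noindent\emph{Proof strategy.} The plan is to verify directly that the truncated tuple $\psi_{[q]} = (\psi^{(0)},\ldots,\psi^{(q)})$ satisfies the two defining conditions of $\Lip(\th,\Gamma,W)$ from Definition \ref{lip_k_def}. The boundedness condition \eqref{lip_k_bdd} is immediate, since $||\psi^{(l)}(x)||_{\cl(V^{\otimes l};W)} \le ||\psi||_{\Lip(\rho,\Gamma,W)}$ for every $l \in \{0,\ldots,q\}$ and $x \in \Gamma$; this is the source of the leading factor $\max\{1,\cdot\}$ in \eqref{lip_k_nested_norm_est_0} and \eqref{lip_k_nested_C1}. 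For the remainder condition \eqref{lip_k_remain_holder}, the key observation is that the Taylor remainder $R^{\psi_{[q]}}_l$ of the tuple $\psi_{[q]}$ relative to $\th \in (q,q+1]$ (cf.\ \eqref{lip_k_tay_expansion} with $k$ replaced by $q$) coincides, for $l \in \{0,\ldots,q\}$, with $R^{\psi}_l$ when $q = n$ (the case $\th \in (n,\rho)$), and with the altered remainder $\tilde R^{\psi}_l$ of \eqref{lip_k_remainder_bound_alt_remainders} when $q < n$ (the case $\th \in (0,n]$). Hence the Remainder Term Estimates Lemma \ref{lip_k_remainder_terms_pointwise_ests} applies verbatim and furnishes, for each admissible $l$, a bound $||R^{\psi_{[q]}}_l(x,y)||_{\cl(V^{\otimes l};W)} \le M_l\,||y-x||_V^{\th-l}\,||\psi||_{\Lip(\rho,\Gamma,W)}$ with $M_l = \min\{\Diam(\Gamma)^{\rho-\th}, G(\rho,\th,l,\Gamma)\}$ in the first case and $M_l = \min\{\Diam(\Gamma)^{\rho-\th} + \sum_{i=q+1}^n \Diam(\Gamma)^{i-\th}/(i-l)!,\ H(\rho,\th,l,\Gamma)\}$ in the second. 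Thus $\psi_{[q]} \in \Lip(\th,\Gamma,W)$ with $||\psi_{[q]}||_{\Lip(\th,\Gamma,W)} \le \max\{1,\ \sup_l M_l\}\,||\psi||_{\Lip(\rho,\Gamma,W)}$, and the whole task reduces to bounding $\sup_l M_l$ by the stated closed forms.

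For \eqref{lip_k_nested_norm_est_0} I would bound, for each $l \in \{0,\ldots,n\}$, $M_l \le \min\{1+e,\Diam(\Gamma)^{\rho-\th}\}$: the estimate $M_l \le \Diam(\Gamma)^{\rho-\th}$ is trivial, and for $M_l \le 1+e$ I split on the diameter — if $\Diam(\Gamma) \le 1$ then $\Diam(\Gamma)^{\rho-\th} \le 1 \le 1+e$ since $\rho > \th$, while if $\Diam(\Gamma) > 1$ then feeding $r = 1 \in (0,\Diam(\Gamma))$ into the infimum \eqref{lip_k_remainder_bound_case_one_W_def_state} gives $G(\rho,\th,l,\Gamma) \le \max\{1,\ 1 + \sum_{s=0}^{n-l} 1/s!\} \le 1+e$. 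Taking the supremum over $l$ and combining with the boundedness condition yields \eqref{lip_k_nested_norm_est_0}.

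For the bound $||\psi_{[q]}||_{\Lip(\th,\Gamma,W)} \le C_1\,||\psi||_{\Lip(\rho,\Gamma,W)}$ in the case $\th \in (0,n]$ I would run the analogous argument on $M_l = \min\{A_l,\ H(\rho,\th,l,\Gamma)\}$ with $A_l := \Diam(\Gamma)^{\rho-\th} + \sum_{i=q+1}^n \Diam(\Gamma)^{i-\th}/(i-l)!$; since $l \le q$ forces $(i-l)! \ge (i-q)!$ for $i \ge q+1$, one has $A_l \le \Diam(\Gamma)^{\rho-\th} + \sum_{j=q+1}^n \Diam(\Gamma)^{j-\th}/(j-q)!$, while $M_l \le 1+e$ follows either from $\Diam(\Gamma)\le1$ together with $\sum_{m\ge1}1/m! = e-1$ (so $A_l \le 1+(e-1) = e$), or, when $\Diam(\Gamma)>1$, by feeding $r=1$ into the infimum \eqref{lip_k_remain_bds_case_two_H_def_state}, giving $H(\rho,\th,l,\Gamma) \le \max\{1+\sum_{i=q+1}^n 1/(i-l)!,\ 1+\sum_{s=0}^{q-l}1/s!\} \le \max\{e,1+e\} = 1+e$. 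Taking the supremum over $l \in \{0,\ldots,q\}$ reproduces exactly $C_1$. For the competing bound $C_2$ I would instead iterate the one-step nesting estimate along the chain $\Lip(\rho,\Gamma,W) \hookrightarrow \Lip(n,\Gamma,W) \hookrightarrow \Lip(n-1,\Gamma,W) \hookrightarrow \cdots \hookrightarrow \Lip(q+1,\Gamma,W) \hookrightarrow \Lip(\th,\Gamma,W)$, each inclusion being an instance of the $C_1$-estimate just proved (or of \eqref{lip_k_nested_norm_est_0} for the last one): the first inclusion, with data $(\rho,n)$, contributes $\max\{1,\min\{1+e,\Diam(\Gamma)^{\rho-n}+1\}\} = 1 + \min\{e,\Diam(\Gamma)^{\rho-n}\}$; each of the $n-(q+1)$ integer inclusions $\Lip(j+1,\Gamma,W) \hookrightarrow \Lip(j,\Gamma,W)$, with data $(j+1,j)$ so that $\Diam(\Gamma)^{(j+1)-j} + \Diam(\Gamma)^0/1! = \Diam(\Gamma)+1$, contributes $1 + \min\{e,\Diam(\Gamma)\}$; and the final inclusion $\Lip(q+1,\Gamma,W) \hookrightarrow \Lip(\th,\Gamma,W)$, with data $(q+1,\th)$, contributes $\max\{1,\min\{1+e,\Diam(\Gamma)^{q+1-\th}\}\}$ (equal to $1$ when $\th = q+1$). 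Multiplying these gives precisely $C_2$, and \eqref{lip_k_nested_norm_est_weak} then follows at once because each of $\max\{1,\min\{1+e,\Diam(\Gamma)^{\rho-\th}\}\}$ and $C_1$ is $\le 1+e$.

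The step I expect to be the main obstacle is purely the bookkeeping in this telescoping argument for $C_2$: one must check that each factor in the product genuinely equals the specialization of the $C_1$ formula (resp.\ of \eqref{lip_k_nested_norm_est_0}) for the correct parameters $n'$, $q'$, and that the number of integer inclusions is exactly $n-(q+1)$ (using $q < \th \le n$, which is automatic in this case). There is nothing deep here, but this is where an off-by-one slip would hide; the remainder reduces to the elementary facts $\sum_{s\ge0}1/s! \le e$, $\sum_{m\ge1}1/m! = e-1$, and the identity $\min\{1+e,1+x\} = 1+\min\{e,x\}$ for $x \ge 0$.
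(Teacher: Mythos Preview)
Your proposal is correct and follows essentially the same approach as the paper: the paper organises the argument into two auxiliary claims (one for the same-integer-level case $\th \in (n,\rho)$ and one for the drop-in-integer case $\th \in (0,n]$), each proved via Lemma \ref{lip_k_remainder_terms_pointwise_ests} together with the diameter $\leq 1$ / diameter $> 1$ (feed $r=1$) split you describe, and then obtains $C_2$ by exactly the telescoping chain $\Lip(\rho)\to\Lip(n)\to\cdots\to\Lip(q+1)\to\Lip(\th)$ with the same factor-by-factor bookkeeping you outline. Your identification of the key monotonicity $(i-l)! \geq (i-q)!$ and the identity $\max\{1,\min\{1+e,1+x\}\} = 1+\min\{e,x\}$ matches the paper's computations precisely.
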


\begin{remark}
The integers $n, q \in \Z_{\geq 0}$ are determined
by $\rho$ and $\th$ respectively. Consequently, 
any apparent dependence on $n$ and $q$ in 
\eqref{lip_k_nested_C1} and \eqref{lip_k_nested_C2} is 
really dependence on $\rho$ and $\th$ respectively.
\end{remark}

\begin{remark}
\label{lip_k_nested_equal_one_example}
We can have equality in \eqref{lip_k_nested_norm_est_0}. 
To see this, let $\Gamma := [-1,1] \subset \R$
and define $\psi^{(0)} : \Gamma \to \R$ by 
$\psi^{(0)}(x) := x^2$,
$\psi^{(1)} : \Gamma \to \cl(\R ; \R)$ by
$\psi^{(1)}(x)[v] := 2xv$,
$R_0(x,y) := \psi^{(0)}(y) -\psi^{(0)}(x) -\psi^{(1)}[y-x]
= (y-x)^2$ and
$R_1(x,y)[v] := \psi^{(1)}(y)[v] - \psi^{(1)}(x)[v]
= 2(y-x)v$.
Then $\psi = \left( \psi^{(0)} , \psi^{(1)} \right) 
\in \Lip(2, \Gamma, \R)$ with 
$|| \psi ||_{\Lip(2,\Gamma,\R)} = 2$.
However $R_1 (-1 ,1 )[v] = 4 v = 2\sqrt{2}
| 1 - (-1)|^{\frac{1}{2}} v$ and so 
$|| \psi ||_{\Lip(3/2 , \Gamma,\R)} = 2\sqrt{2}
= \sqrt{2} || \psi ||_{\Lip(2,\Gamma,\R)}$.
Here $\Diam(\Gamma)=2$, $\rho=2$
and $\th = 3/2$. Thus we observe that
$1 < \Diam(\Gamma)^{2-\frac{3}{2}} = 
\sqrt{2} < 1 + e$, which establishes equality in 
\eqref{lip_k_nested_norm_est_0}.
\end{remark}

\begin{remark}
\label{lip_k_nested_equal_two_example}
We can have equality in \eqref{lip_k_nested_norm_est}. 
As an example, let $\Gamma := \{0,1\} \subset \R$
and define $\psi^{(0)} : \Gamma \to \R$
by $\psi^{(0)}(0) := -A$ and $\psi^{(0)}(1) := A$ 
for some $A > 0$, and define $\psi^{(1)} : 
\Gamma \to \cl ( \R ; \R)$ by
$\psi^{(1)}(x)[v] := Av$ for every $x \in \Gamma$.
Then given $x,y \in \Gamma$ 
\beq
    \label{nest_est_sharp_1}
        \psi^{(0)}(y) - \psi^{(0)}(x) - \psi^{(1)}[y-x]
        = \threepartdef
        {A}{x=0,y=1}
        {-A}{x=1,y=0}
        {0}{x=y.} 
\eeq
It follows from \eqref{nest_est_sharp_1} that
$\psi = \left( \psi^{(0)} , \psi^{(1)} \right) 
\in \Lip(2,\Gamma,\R)$
with $|| \psi ||_{\Lip(2,\Gamma,\R)}=A$.
Moreover, we also have that
$\psi_{[0]} = \psi^{(0)} \in \Lip(1,\Gamma,\R)$ with 
$\left|\left| \psi_{[0]} 
\right|\right|_{\Lip(1,\Gamma,\R)}= 2A = 2
|| \psi ||_{\Lip(2,\Gamma,\R)}$.
Here $\Diam(\Gamma)=1$,
$n=1$, $\rho = 2$, $\th =1$ and $q = 0$.
Consequently, both $C_1$ defined in 
\eqref{lip_k_nested_C1} and $C_2$ defined in
\eqref{lip_k_nested_C2} are equal to $2$. 
Hence $\min \left\{ C_1 , C_2 \right\} = 2$,
and so we have equality in \eqref{lip_k_nested_norm_est}.
\end{remark}

\begin{proof}[Proof of Lemma 
\ref{lip_k_spaces_nested_lemma}]
Let $V$ and $W$ be Banach spaces, and assume that 
the tensor powers of $V$ are all equipped with 
admissible norms 
(cf. Definition \ref{admissible_tensor_norm}).
Assume that $\Gamma \subset V$ is closed.
Let $\rho > 0$ with $n \in \Z_{\geq 0}$ such that
$\rho \in (n,n+1]$, and let $\th \in (0,\rho)$
with $q \in \{0, \ldots , n\}$ such that
$\th \in (q,q+1]$.
To deal with the case that $\th \in (n, \rho)$, 
we first establish the following claim.

\begin{claim}
\label{lip_k_nested_claim_1}
Suppose $V$ and $W$ are Banach spaces, and that the tensor 
powers of $V$ are all equipped with admissible norms 
(cf. Definition \ref{admissible_tensor_norm}).
Assume that $\cd \subset V$ is closed.
Let $\lambda > 0$ with $m \in \Z_{\geq 0}$ such that 
$\lambda \in (m,m+1]$, and $\sigma \in (m,\lambda)$. 
If $\phi = (\phi^{(0)} , \ldots , \phi^{(m)}) 
\in \Lip(\lambda, \cd,W)$ then 
$\phi \in \Lip(\sigma, \cd,W)$, and we have the estimate
that
\beq    
    \label{lip_k_nested_claim_1_conc}
        || \phi ||_{\Lip(\sigma, \cd,W)}
        \leq
        \max \left\{ 1 ~,~ 
        \min \left\{ 1 + e ~,~
        \Diam(\cd)^{\lambda - \sigma} 
        \right\} \right\}
        || \phi ||_{\Lip(\lambda,\cd,W)}.
\eeq
\end{claim}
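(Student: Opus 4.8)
The plan is to obtain Claim~\ref{lip_k_nested_claim_1} as a direct corollary of the Remainder Term Estimates in Lemma~\ref{lip_k_remainder_terms_pointwise_ests}. First I would record that, since $\sigma \in (m,\lambda)$ and $\lambda \in (m,m+1]$, the integer $q$ with $\sigma \in (q,q+1]$ is exactly $q = m$; hence $\phi_{[q]} = \phi$, and the remainder maps $R^{\phi}_l : \cd \times \cd \to \cl(V^{\otimes l};W)$ appearing in Definition~\ref{lip_k_def} for the exponent $\sigma$ coincide with those for the exponent $\lambda$ (only the H\"older exponent carried by $\|y-x\|_V$ changes). Thus it suffices to verify the two conditions \eqref{lip_k_bdd} and \eqref{lip_k_remain_holder} of Definition~\ref{lip_k_def} for $\phi$, at the exponent $\sigma$, with
\[
M := \max\left\{ 1 ,\ \min\left\{ 1 + e ,\ \Diam(\cd)^{\lambda-\sigma} \right\} \right\} \, \|\phi\|_{\Lip(\lambda,\cd,W)}.
\]

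The bound \eqref{lip_k_bdd} is immediate, since for each $l \in \{0,\ldots,m\}$ and $x \in \cd$ we already have $\|\phi^{(l)}(x)\|_{\cl(V^{\otimes l};W)} \le \|\phi\|_{\Lip(\lambda,\cd,W)} \le M$ (the factor $\max\{1,\cdot\}$ is at least $1$). For the remainder bound \eqref{lip_k_remain_holder}, I would invoke Lemma~\ref{lip_k_remainder_terms_pointwise_ests} in its first case --- legitimate precisely because $\sigma \in (m,\lambda)$, i.e. $\th \in (n,\rho)$ in the notation there, with $\rho = \lambda$, $n = m$, $\th = \sigma$, $\Gamma = \cd$, $\psi = \phi$ --- to get, for every $l \in \{0,\ldots,m\}$ and all $x \ne y$ in $\cd$,
\[
\frac{\left\|R^{\phi}_l(x,y)\right\|_{\cl(V^{\otimes l};W)}}{\|y-x\|_V^{\sigma-l}} \le \min\left\{ \Diam(\cd)^{\lambda-\sigma} ,\ G(\lambda,\sigma,l,\cd) \right\} \|\phi\|_{\Lip(\lambda,\cd,W)}.
\]
Consequently the whole claim reduces to the elementary inequality $\min\{\Diam(\cd)^{\lambda-\sigma}, G(\lambda,\sigma,l,\cd)\} \le \max\{1, \min\{1+e, \Diam(\cd)^{\lambda-\sigma}\}\}$.

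I would prove that remaining inequality by splitting on the size of $\Diam(\cd)$. If $\Diam(\cd) \le 1$ then $\Diam(\cd)^{\lambda-\sigma} \le 1$ because $\lambda - \sigma > 0$, so the left-hand side is at most $\Diam(\cd)^{\lambda-\sigma} \le 1$ and the inequality holds trivially; the degenerate subcase $\Diam(\cd) = 0$ (so $\cd$ is empty or a single point) is even simpler, as the remainder condition is then vacuous. If $\Diam(\cd) > 1$ (allowing $\Diam(\cd) = \infty$), then $r = 1$ lies in the range $(0,\Diam(\cd))$ over which the infimum defining $G$ in \eqref{lip_k_remainder_bound_case_one_W_def_state} is taken, and evaluating the bracket there gives $G(\lambda,\sigma,l,\cd) \le \max\{1,\, 1 + \sum_{s=0}^{m-l}\tfrac{1}{s!}\} = 1 + \sum_{s=0}^{m-l}\tfrac{1}{s!} \le 1 + e$; comparing the two sub-possibilities $\Diam(\cd)^{\lambda-\sigma} \ge 1+e$ and $1 < \Diam(\cd)^{\lambda-\sigma} < 1+e$ then shows $\min\{\Diam(\cd)^{\lambda-\sigma}, G\} \le \max\{1, \min\{1+e, \Diam(\cd)^{\lambda-\sigma}\}\}$ in both. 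Assembling the pointwise bound and the remainder bound yields $\phi \in \Lip(\sigma,\cd,W)$ with $\|\phi\|_{\Lip(\sigma,\cd,W)} \le M$, which is \eqref{lip_k_nested_claim_1_conc}. The only genuinely fiddly point is this last case analysis; everything upstream of it is bookkeeping together with one application of Lemma~\ref{lip_k_remainder_terms_pointwise_ests}.
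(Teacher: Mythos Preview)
Your proposal is correct and follows essentially the same route as the paper: both reduce to Lemma~\ref{lip_k_remainder_terms_pointwise_ests} in its first case ($\th \in (n,\rho)$) and then dispatch the resulting elementary inequality by the same split on $\Diam(\cd) \le 1$ versus $\Diam(\cd) > 1$, evaluating the infimum defining $G$ at $r=1$ in the latter case. The only cosmetic difference is that the paper first normalises to $\|\phi\|_{\Lip(\lambda,\cd,W)}=1$, whereas you carry the norm through as a factor in $M$.
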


\begin{proof}[Proof of Claim \ref{lip_k_nested_claim_1}]
For each $l \in \{0, \ldots , m\}$ define
$R_l^{\phi} : \cd \times \cd \to 
\cl( V^{\otimes l};W)$ for 
$x,y \in \cd$ and $v \in V^{\otimes l}$ by
\beq    
    \label{lip_k_nested_remainders}
        R^{\phi}_l(x,y)[v] :=
        \phi^{(l)}(y)[v] - \sum_{s=0}^{n-l}
        \frac{1}{s!}\phi^{l+s}(x)\left[ v \otimes 
        (y-x)^{\otimes s}\right].
\eeq
Since the estimate \eqref{lip_k_nested_claim_1_conc} 
is trivial when $||\phi||_{\Lip(\lambda,\cd,W)}=0$,
we need only establish the validity of 
\eqref{lip_k_nested_claim_1_conc} when 
$||\phi||_{\Lip(\lambda,\cd,W)} \neq 0$. But in this 
case, by replacing $\phi$ by $\phi / 
|| \phi ||_{\Lip(\lambda,\cd,W)}$ it suffices to prove 
\eqref{lip_k_nested_claim_1_conc} under the 
additional assumption that 
$|| \phi ||_{\Lip(\lambda,\cd,W)} = 1$

A consequence of $\phi \in \Lip(\lambda,\cd,W)$
with $|| \phi ||_{\Lip(\lambda,\cd,W)} = 1$ is that, 
whenever $l \in \{0, \ldots , m\}$ and $x,y \in \cd$,
we have the bounds (cf. \eqref{lip_k_bdd} and 
\eqref{lip_k_remain_holder})
\beq
    \label{lip_k_nested_bdd}
        (\bI) \quad \left|\left| \phi^{(l)}(x)
        \right|\right|_{\cl(V^{\otimes l}; W)}
        \leq 1
        \qquad \text{and} \qquad
        (\bII) \quad 
        \left|\left| R^{\phi}_l(x,y) 
        \right|\right|_{\cl(V^{\otimes l};W)}
        \leq || y - x ||_V^{\lambda - l}.
\eeq
Given any $l \in \{0, \ldots , m\}$ and any point 
$x \in \cd$, we can conclude from 
\eqref{lip_k_nested_remainders} that 
$R^{\psi}_l(x,x) \equiv 0$ in $\cl(V^{\otimes l} ; W)$.
Hence controlling the $\cl(V^{\otimes l};W)$ norm of the 
remainder term $R^{\psi}_l$ is trivial on the 
diagonal of $\cd \times \cd$.

Given any $l \in \{0, \ldots , m\}$, we now estimate 
the $\cl(V^{\otimes l};W)$ norm of $R^{\psi}_l$ off the 
diagonal of $\cd \times \cd$. For any points $x,y \in \cd$
with $x \neq y$, we apply Lemma 
\ref{lip_k_remainder_terms_pointwise_ests}, recalling 
that $||\phi||_{\Lip(\lambda,\cd,W)}=1$, to conclude 
that (cf. \eqref{lip_k_remainder_bound_case_one})
\beq
    \label{lip_k_nested_case_one_remainders}
        \frac{\left|\left| R^{\phi}_l(x,y) 
        \right|\right|_{\cl(V^{\otimes l};W)}}
        {||y-x||_V^{\sigma-l}}
        \leq
        \min \left\{ \Diam(\cd)^{\lambda - \sigma} ~,~
        G(\lambda, \sigma, l, \cd) \right\}
\eeq
where $G(\lambda,\sigma,l,\cd)$ is defined by
\beq
    \label{lip_k_nested_W_def}
        G(\lambda,\sigma,l,\cd) := 
        \inf_{r \in (0, \Diam(\cd))}
        \left\{
        \max \left\{ r^{\lambda - \sigma} ~,~
        \frac{1 }{r^{\sigma-l}}
        \left( 1 + 
        \sum_{s=0}^{m-l} \frac{r^s}{s!} \right)
        \right\} \right\}.
\eeq
We now prove that
\beq
    \label{lip_k_nested_claim_1_1+e_bd}
        \min \left\{ \Diam(\cd)^{\lambda - \sigma} ,
        G(\lambda,\sigma,l,\cd) \right\}
        \leq
        1+e.
\eeq
If $\Diam(\cd) \leq 1$, then  
\eqref{lip_k_nested_claim_1_1+e_bd} is obtained 
by observing that
$$\min \left\{ \Diam(\cd)^{\lambda - \sigma} ,
G(\lambda,\sigma,l,\cd) \right\} \leq 
\Diam(\cd)^{\lambda - \sigma} \leq 1 < 1 + e.$$
If $\Diam(\cd) > 1$ then 
\eqref{lip_k_nested_claim_1_1+e_bd} is obtained 
by observing that
$$\min \left\{ \Diam(\cd)^{\lambda - \sigma} ,
    G(\lambda,\sigma,l,\cd) \right\}
    \leq
    G(\lambda,\sigma,l,\cd)
    \leq
    \max \left\{ 1 ~,~ 1 + \sum_{s=0}^{m-l}
    \frac{1}{s!} \right\}
    \leq (1+e).$$
Hence \eqref{lip_k_nested_claim_1_1+e_bd} is
proven. Together 
\eqref{lip_k_nested_case_one_remainders} and 
\eqref{lip_k_nested_claim_1_1+e_bd} yield that
\beq
    \label{lip_k_nested_case_one_remain_bd_3}
        \frac{\left|\left| R^{\phi}_l(x,y) 
        \right|\right|_{\cl(V^{\otimes l};W)}}
        {||y-x||_V^{\sigma-l}}
        \leq 1+e
\eeq
Thus we may combine 
\eqref{lip_k_nested_case_one_remainders} and 
\eqref{lip_k_nested_case_one_remain_bd_3} to 
conclude that
\beq
    \label{lip_k_nested_case_one_remain_bd_5}
        \frac{\left|\left| R^{\phi}_l(x,y) 
        \right|\right|_{\cl(V^{\otimes l};W)}}
        {||y-x||_V^{\sigma-l}}
        \leq
        \min \left\{ \Diam(\cd)^{\lambda - \sigma}
        ~,~ 1 + e \right\}.
\eeq
Both the choice of $l \in \{0, \ldots , m\}$
and the choice of points $x,y \in \cd$ with $x \neq y$ 
were arbitrary. Hence we may conclude that the 
estimate \eqref{lip_k_nested_case_one_remain_bd_5} 
is valid for every $l \in \{0, \ldots , m\}$
and all points $x,y \in \cd$ with $x \neq y$.
The pointwise bounds for the functions
$\phi^{(0)} , \ldots , \phi^{(m)}$ given in (\bI) of
\eqref{lip_k_nested_bdd} and the remainder term 
bounds \eqref{lip_k_nested_case_one_remain_bd_5}
establish that
\beq
    \label{lip_k_nested_case_one_done}
        \left|\left| \phi
        \right|\right|_{\Lip(\sigma,\cd,W)}
        \leq
        \max \left\{ 1 ~,~ 
        \min \left\{ \Diam(\cd)^{\lambda -\sigma} 
        ~,~ 1 + e \right\} \right\}. 
\eeq
The estimate \eqref{lip_k_nested_case_one_done} 
is precisely the estimate 
claimed in \eqref{lip_k_nested_claim_1_conc} for the 
case that $||\phi||_{\Lip(\lambda,\cd,W)}=1$.
This completes the proof of Claim 
\ref{lip_k_nested_claim_1}.
\end{proof}
\vskip 4pt
\noindent
The estimate claimed in the case that 
$\th \in (n,\rho)$ is an immediate consequence of 
Claim \ref{lip_k_nested_claim_1}.
Indeed, assuming that $\th \in (n,\rho)$, we
appeal to Claim \ref{lip_k_nested_claim_1}
with $\cd := \Gamma$, $m := n$, $\lambda := \rho$
and $\sigma := \th$ to conclude from 
\eqref{lip_k_nested_claim_1_conc} that
\beq
    \label{lip_k_nested_eta_norm_est_0}
        || \psi ||_{\Lip(\th,\Gamma,W)}
        \leq
        \max \left\{ 1 ~,~
        \min \left\{ 1 + e ~,~
        \Diam(\Gamma)^{\rho - \th} 
        \right\} \right\}
        || \psi ||_{\Lip(\rho,\Gamma,W)},
\eeq
which is precisely the estimate claimed in
\eqref{lip_k_nested_norm_est_0}.

It remains only to establish the estimate
claimed in \eqref{lip_k_nested_norm_est} 
for the case that $0 < \th \leq n < \rho \leq n+1$.
Observe that this requires $n \geq 1$ and 
$q \in \{0 , \ldots , n-1\}$.
We begin by establishing the following claim.

\begin{claim}
\label{lip_k_nested_claim_2}
Suppose $V$ and $W$ are Banach spaces, and that the tensor 
powers of $V$ are all equipped with admissible norms 
(cf. Definition \ref{admissible_tensor_norm}).
Assume that $\cd \subset V$ is closed.
Let $\lambda > 1$ with $m \in \Z_{\geq 1}$ such that
$\lambda \in (m,m+1]$, and $\sigma \in (0,m]$ with 
$p \in \{0 , \ldots , m-1\}$ such that
$\sigma \in (p,p+1]$.
If $\phi = (\phi^{(0)} , \ldots , \phi^{(m)}) 
\in \Lip(\lambda, \cd,W)$ then 
$\phi_{[p]} = (\phi^{(0)} , \ldots , \phi^{(p)}) 
\in \Lip(\sigma, \cd,W)$, and we have the estimate
that
\beq
    \label{lip_k_nested_claim_2_conc}
        \left|\left| \phi_{[p]} 
        \right|\right|_{\Lip(\sigma, \cd,W)}
        \leq
        \max \left\{ 1 ~,~
        \min \left\{ 1+e ~,~
        \Diam(\cd)^{\lambda - \sigma}
        +
        \sum_{j=p+1}^{m}
        \frac{\Diam(\cd)^{j-\sigma}}{(j-p)!}
        \right\}
        \right\}
        || \phi ||_{\Lip(\lambda, \cd,W)}.
\eeq
\end{claim}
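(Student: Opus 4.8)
The plan is to reduce at once to the normalized case $\|\phi\|_{\Lip(\lambda,\cd,W)}=1$: if this norm is zero then every $\phi^{(l)}$ vanishes identically and all remainder terms are zero, so \eqref{lip_k_nested_claim_2_conc} is trivial, and otherwise we rescale $\phi$. Under normalization, \eqref{lip_k_bdd} supplies the pointwise bounds $\|\phi^{(l)}(x)\|_{\cl(V^{\otimes l};W)}\le 1$ for all $l$ and $x$, and since $\phi_{[p]}=(\phi^{(0)},\dots,\phi^{(p)})$ already has the correct structural form, verifying membership in $\Lip(\sigma,\cd,W)$ together with the claimed norm bound amounts to controlling, for each $l\in\{0,\dots,p\}$ and $x\ne y$ in $\cd$, the $\Lip(\sigma)$ remainder term $R^{\phi_{[p]}}_l(x,y)$ of $\phi_{[p]}$ (as in \eqref{lip_k_tay_expansion}, with $\sigma\in(p,p+1]$ playing the role of $\gamma$ and $p$ the role of $k$) by $M\,\|y-x\|_V^{\sigma-l}$ for $M$ the right-hand constant.

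The key observation I would record is that this remainder term is \emph{exactly} the altered remainder term $\tilde R^{\phi}_l(x,y)$ of \eqref{lip_k_remainder_bound_alt_remainders} in Lemma \ref{lip_k_remainder_terms_pointwise_ests}, applied with $\rho:=\lambda$, $n:=m$, $\th:=\sigma$, and $q:=p$: adding $\sum_{s=p-l+1}^{m-l}\tfrac1{s!}\phi^{(l+s)}(x)[\,\cdot\otimes(y-x)^{\otimes s}]$ back onto $R^{\phi}_l(x,y)$ cancels the terms of order exceeding $p-l$ and leaves precisely $\phi^{(l)}(y)[\cdot]-\sum_{s=0}^{p-l}\tfrac1{s!}\phi^{(l+s)}(x)[\,\cdot\otimes(y-x)^{\otimes s}]$. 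Hence the second case of Lemma \ref{lip_k_remainder_terms_pointwise_ests}, namely \eqref{lip_k_remainder_bound_case_two}, applies and yields
\[
\frac{\|R^{\phi_{[p]}}_l(x,y)\|_{\cl(V^{\otimes l};W)}}{\|y-x\|_V^{\sigma-l}}\le\min\Bigl\{\Diam(\cd)^{\lambda-\sigma}+\sum_{i=p+1}^m\frac{\Diam(\cd)^{i-\sigma}}{(i-l)!}\,,\,H(\lambda,\sigma,l,\cd)\Bigr\}.
\]

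It then remains to bound each branch of this minimum by the target constant. For the first branch: since $l\le p$ we have $(i-l)!\ge(i-p)!$ for $i\ge p+1$, so the first branch is at most $\Diam(\cd)^{\lambda-\sigma}+\sum_{j=p+1}^m\Diam(\cd)^{j-\sigma}/(j-p)!$. For the bound by $1+e$ I would split on the size of $\Diam(\cd)$: if $\Diam(\cd)\le1$, then (using $\lambda>\sigma$ and $j-\sigma>0$) the first branch is already at most $1+\sum_{i\ge p+1}1/(i-l)!\le 1+(e-1)<1+e$; if $\Diam(\cd)>1$, then $r=1$ is an admissible point of the infimum defining $H(\lambda,\sigma,l,\cd)$ in \eqref{lip_k_remain_bds_case_two_H_def_state}, and evaluating the bracket there gives $H(\lambda,\sigma,l,\cd)\le\max\{\,1+\sum_{i\ge p+1}1/(i-l)!\,,\,1+\sum_{s=0}^{p-l}1/s!\,\}\le 1+e$. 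Either way the minimum is at most $\min\{\,1+e\,,\,\Diam(\cd)^{\lambda-\sigma}+\sum_{j=p+1}^m\Diam(\cd)^{j-\sigma}/(j-p)!\,\}$.

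Finally, combining this remainder bound with the pointwise bounds $\|\phi^{(l)}(x)\|_{\cl(V^{\otimes l};W)}\le1$ ($l\le p$) shows $\phi_{[p]}\in\Lip(\sigma,\cd,W)$ with norm at most $\max\{1,\min\{1+e,\Diam(\cd)^{\lambda-\sigma}+\sum_{j=p+1}^m\Diam(\cd)^{j-\sigma}/(j-p)!\}\}$; undoing the normalization multiplies the right-hand side by $\|\phi\|_{\Lip(\lambda,\cd,W)}$, which is \eqref{lip_k_nested_claim_2_conc}. I do not expect any genuine obstacle here: the whole argument is bookkeeping on top of Lemma \ref{lip_k_remainder_terms_pointwise_ests}, and the only point requiring a moment of care is the identification $R^{\phi_{[p]}}_l=\tilde R^{\phi}_l$ that makes the lemma's second case available; once that is in place, the branch-by-branch comparison of constants (including the remark that $r=1\in(0,\Diam(\cd))$ precisely when $\Diam(\cd)>1$) is routine.
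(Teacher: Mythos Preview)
Your proposal is correct and follows essentially the same approach as the paper: normalize to $\|\phi\|_{\Lip(\lambda,\cd,W)}=1$, identify the $\Lip(\sigma)$ remainder terms of $\phi_{[p]}$ with the altered remainders $\tilde R^{\phi}_l$ of Lemma~\ref{lip_k_remainder_terms_pointwise_ests}, apply the second case of that lemma, and then bound the resulting minimum by $\min\{1+e,\Diam(\cd)^{\lambda-\sigma}+\sum_{j=p+1}^m\Diam(\cd)^{j-\sigma}/(j-p)!\}$ via the same $\Diam(\cd)\le1$ versus $\Diam(\cd)>1$ case-split (using $r=1$ in $H$ in the latter case). The observation $(i-l)!\ge(i-p)!$ that you use to pass from the $l$-dependent sum to the $p$-dependent one is also exactly what the paper does.
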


\begin{proof}[Proof of Claim 
\ref{lip_k_nested_claim_2}]
For each $l \in \{0, \ldots , m\}$ define 
$R^{\phi}_l : \cd \times \cd \to \cl (V^{\otimes l} ;W)$
for $x,y \in \cd$ and $v \in V^{\otimes l}$ by 
\beq
    \label{lip_k_nested_lemma_phi_tay_exp_2}
        R^{\phi}_l(x,y)[v] :=
        \phi^{(l)}(y)[v] -
        \sum_{j=0}^{m-l} \frac{1}{j!}
        \phi^{(j + l)}(x) \left[ 
        v \otimes (y - x)^{\otimes j} 
        \right].
\eeq
Since the estimate \eqref{lip_k_nested_claim_2_conc} 
is trivial when $||\phi||_{\Lip(\lambda,\cd,W)}=0$,
we need only establish the validity of 
\eqref{lip_k_nested_claim_2_conc} when 
$||\phi||_{\Lip(\lambda,\cd,W)} \neq 0$. But in this 
case, by replacing $\phi$ by $\phi / 
|| \phi ||_{\Lip(\lambda,\cd,W)}$ it suffices to prove 
\eqref{lip_k_nested_claim_2_conc} under the 
additional assumption that 
$|| \phi ||_{\Lip(\lambda,\cd,W)} = 1$

A consequence of $\phi \in \Lip(\lambda,\cd,W)$
with $|| \phi ||_{\Lip(\lambda,\cd,W)} = 1$ is that,
whenever $l \in \{0, \ldots , m\}$ and $x,y \in \cd$, 
we have the bounds (cf. \eqref{lip_k_bdd} and 
\eqref{lip_k_remain_holder})
\beq
    \label{lip_k_nested_lemma_phi_l_bound_2}
        (\bI) \quad \left|\left| \phi^{(l)}(x) 
        \right|\right|_{\cl ( V^{\otimes l} ;W)}
        \leq 1
        \qquad \text{and} \qquad
        (\bII) \quad 
        \left|\left| R^{\phi}_l(x,y) 
        \right|\right|_{\cl ( V^{\otimes l} ; W)}
        \leq || y - x ||_V^{\lambda - l}.
\eeq
Our goal is to show that $\phi_{[p]} = 
(\phi^{(0)} , \ldots , \phi^{(p)})$ is in 
$\Lip(\sigma,\cd,W)$.
For this purpose, given $s \in \{0, \ldots , p\}$,
let $\tilde{R}^{\phi}_s : \cd \times \cd \to
\cl( V^{\otimes s} ; W)$ be defined for all
$x,y \in \cd$ and $v \in V^{\otimes s}$ by
\beq
    \label{lip_k_nested_lemma_psi_new_tay_exp}
        \tilde{R}^{\phi}_s(x,y)[v] := 
        \phi^{(s)}(y)[v] -
        \sum_{j=0}^{p-s} \frac{1}{j!}
        \phi^{(s+j)}(x) \left[ 
        v \otimes (y - x)^{\otimes j} 
        \right].
\eeq
Together, \eqref{lip_k_nested_lemma_phi_tay_exp_2} and 
\eqref{lip_k_nested_lemma_psi_new_tay_exp} yield that 
\beq
    \label{lip_k_nested_new_remainders}
        \tilde{R}^{\phi}_s(x,y)[v] =
        R^{\phi}_s(x,y)[v] + 
        \sum_{j=p+1-s}^{m-s} \frac{1}{(k-j)!}
        \phi^{(s+j)}(x) 
        \left[ v \otimes (y-x)^{\otimes j}
        \right].
\eeq
Given any $l \in \{0 , \ldots , p\}$ and any point
$x \in \cd$, we conclude from 
\eqref{lip_k_nested_new_remainders} that
$\tilde{R}^{\psi}_l(x,x) \equiv 0$ in 
$\cl(V^{\otimes l};W)$. Hence controlling the 
$\cl(V^{\otimes l};W)$ norm of $\tilde{R}^{\psi}_l$
is trivial on the diagonal of $\cd \times \cd$.

Given any $l \in \{0, \ldots , p\}$ we now 
estimate the $\cl(V^{\otimes l};W)$ norm of 
$\tilde{R}^{\psi}_l$ off of the diagonal of 
$\cd \times \cd$. The alteration in 
\eqref{lip_k_nested_new_remainders} is exactly the
same as the alteration defined in 
\eqref{lip_k_remainder_bound_alt_remainders} of
Lemma \ref{lip_k_remainder_terms_pointwise_ests}.
Consequently, recalling that 
$||\phi||_{\Lip(\lambda,\cd,W)} =1$, 
we may apply that result 
(Lemma \ref{lip_k_remainder_terms_pointwise_ests})
to deduce that for 
every $x,y \in \cd$ with $x \neq y$ we have that
(cf. \eqref{lip_k_remainder_bound_case_two})
\beq
    \label{lip_k_nested_claim_2_remain_ests}
        \frac{\left|\left| \tilde{R}^{\phi}_l(x,y) 
        \right|\right|_{\cl(V^{\otimes l};W)}}
        {|| y - x ||_V^{\sigma - l}}
        \leq
        \min \left\{ \Diam(\cd)^{\lambda - \sigma} 
        + \sum_{i=p+1}^m \frac{\Diam(\cd)^{i-\sigma} }
        {(i-l)!} ~,~ H(\lambda,\sigma,l,\cd) \right\}
\eeq
where $H(\lambda,\sigma,l,\cd)$ is defined by
(cf. \eqref{lip_k_remain_bds_case_two_H_def_state})
\beq
    \label{lip_k_nested_claim_2_H_def}
        H(\lambda,\sigma,l,\cd) :=
        \inf_{r \in (0, \Diam(\cd))}
        \left\{
        \max \left\{ r^{\lambda - \sigma} + 
        \sum_{i=p+1}^m \frac{r^{i-\sigma}}{(i-l)!} ~,~
        \frac{1}{r^{\sigma-l}} \left( 1 + 
        \sum_{s=0}^{p-l} \frac{r^s}{s!} \right)
        \right\} \right\}.
\eeq
We now prove that
\beq
    \label{lip_k_nested_claim_2_1+e_bd}
        \cH := 
        \min \left\{ \Diam(\cd)^{\lambda - \sigma} 
        + \sum_{i=p+1}^m \frac{\Diam(\cd)^{i-\sigma} }
        {(i-l)!} ~,~ H(\lambda,\sigma,l,\cd) \right\}
        \leq 
        1 + e.
\eeq
If $\Diam(\cd) \leq 1$ then we obtain 
\eqref{lip_k_nested_claim_2_1+e_bd} by observing,
for every $i \in \{p+1, \ldots , m\}$, that 
$(i-l)! \geq (i-p)!$ and hence
$$\cH \leq 
\Diam(\cd)^{\lambda - \sigma} 
+ \sum_{i=p+1}^m \frac{\Diam(\cd)^{i-\sigma} }
{(i-l)!} \leq 1 + \sum_{i=p+1}^m\frac{1}{(i-p)!}
< 1 + e.$$
If $\Diam(\cd) > 1$ then we obtain 
\eqref{lip_k_nested_claim_2_1+e_bd} by observing that
$$\cH \leq 
H(\lambda,\sigma,l,\cd) \leq
\max \left\{ 1 + \sum_{i=p+1}^m \frac{1}{(i-l)!}
, 1 + \sum_{s=0}^{p-l} \frac{1}{s!} \right\} 
\leq 
\max \left\{ 1 + \sum_{i=p+1}^m \frac{1}{(i-p)!}
, 1 + \sum_{s=0}^{p-l} \frac{1}{s!} \right\} 
< 1 + e.$$
Hence \eqref{lip_k_nested_claim_2_1+e_bd} is proven.
Together \eqref{lip_k_nested_claim_2_remain_ests} and 
\eqref{lip_k_nested_claim_2_1+e_bd} establish that
\beq
    \label{lip_k_nested_claim_2_remain_ests_B}
        \frac{\left|\left| \tilde{R}^{\phi}_l(x,y) 
        \right|\right|_{\cl(V^{\otimes l};W)}}
        {|| y - x ||_V^{\sigma - l}}
        \leq 1+e
\eeq
Thus we may combine 
\eqref{lip_k_nested_claim_2_remain_ests},  
\eqref{lip_k_nested_claim_2_remain_ests_B}, 
and the observation that for every 
$i \in \{p+1, \ldots ,m\}$ we have $(i-l)! \geq (i-p)!$ 
to conclude that
\beq
    \label{lip_k_nested_claim_2_remain_bd_5}
        \frac{\left|\left| \tilde{R}^{\phi}_l(x,y) 
        \right|\right|_{\cl(V^{\otimes l};W)}}
        {|| y - x ||_V^{\sigma - l}}
        \leq
        \min \left\{ 
        \Diam(\cd)^{\lambda - \sigma}
        + \sum_{i=p+1}^m \frac{\Diam(\cd)^{i-\sigma}}
        {(i-p)!}
        ~,~ 1+e\right\}.
\eeq
The arbitrariness of $l \in \{0, \ldots , p\}$
and the points $x,y \in \cd$ with $x \neq y$ ensure
that the estimate 
\eqref{lip_k_nested_claim_2_remain_bd_5} is valid
for every $l \in \{0, \ldots , p\}$ and every
$x,y \in \cd$ with $x \neq y$.
Together, the definitions 
\eqref{lip_k_nested_lemma_psi_new_tay_exp},
the bounds in (\bI) of 
\eqref{lip_k_nested_lemma_phi_l_bound_2}, and the
estimates 
\eqref{lip_k_nested_claim_2_remain_bd_5} allow
us to conclude that
$\phi_{[p]} = (\phi^{(0)} , \ldots , \phi^{(p)}) 
\in \Lip(\sigma, \cd,W)$, and that
\beq
    \label{lip_k_nested_lip_eta_norm_bound}
        \left|\left| \phi_{[p]} 
        \right|\right|_{\Lip(\sigma, \cd,W)}
        \leq
        \max \left\{ 1 ~,~
        \min \left\{ 1 + e ~,~
        \Diam(\cd)^{\lambda - \sigma}
        +
        \sum_{j=p+1}^{m}
        \frac{\Diam(\cd)^{j-\sigma}}{(j-p)!}
        \right\} \right\}.
\eeq
The estimate \eqref{lip_k_nested_lip_eta_norm_bound}
is precisely the estimate claimed in 
\eqref{lip_k_nested_claim_2_conc} for the case that
$||\phi||_{\Lip(\lambda,\cd,W)}=1$.
This completes the proof of Claim 
\ref{lip_k_nested_claim_2}.
\end{proof}
\vskip 4pt
\noindent
Returning to the proof of Lemma 
\ref{lip_k_spaces_nested_lemma} itself, 
suppose $\th \in (0,n]$.
A direct application of Claim \ref{lip_k_nested_claim_2}
with $\cd := \Gamma$, $m:=n$, $\lambda := \rho$ and
$\sigma := \th$ means that 
\eqref{lip_k_nested_claim_2_conc} yields that
\beq
    \label{lip_k_nested_lip_eta_norm_1&2}
        \left|\left| \psi_{[q]} 
        \right|\right|_{\Lip(\th, \Gamma,W)}
        \leq
        \max \left\{ 1 ~,~
        \min \left\{ 1 + e ~,~
        \Diam(\Gamma)^{\rho - \th}
        +
        \sum_{j=q+1}^{n}
        \frac{\Diam(\Gamma)^{j-\th}}{(j-q)!}
        \right\} \right\}
        || \psi ||_{\Lip(\rho, \Gamma,W)}
\eeq
where $q \leq n - 1$ since $\th \in (0,n]$.
By examining the definition of $C_1$ in 
\eqref{lip_k_nested_C1}, we see that 
\eqref{lip_k_nested_lip_eta_norm_1&2}
is the first part of the estimate claimed 
in \eqref{lip_k_nested_norm_est}.
To derive the remaining estimate we 
note that $\th \in (q,q+1]$.  
By appealing to Claim \ref{lip_k_nested_claim_2}, with 
$\cd := \Gamma$, $m := n$, $\lambda := \rho$ and
$\sigma := n$, we deduce via 
\eqref{lip_k_nested_claim_2_conc} that
\beq
    \label{lip_k_nested_lip_k_norm_bound}
        \left|\left| \psi_{[n-1]} 
        \right|\right|_{\Lip(n, \Gamma,W)}
        \leq
        \min \left\{ 1 + e ~,~
        1+\Diam(\Gamma)^{\rho - n}
        \right\} 
        || \psi ||_{\Lip(\rho, \Gamma,W)}.
\eeq
If we now appeal to Claim \ref{lip_k_nested_claim_2} for 
$\cd := \Gamma$, $m:=n-1$, $\lambda := n$ and
$\sigma := n-1$, then \eqref{lip_k_nested_claim_2_conc} and 
\eqref{lip_k_nested_lip_k_norm_bound} give
\begin{align*}
    \left|\left| \psi_{[n-2]} 
    \right|\right|_{\Lip(n-1, \Gamma,W)}
        &\stackrel{
        \eqref{lip_k_nested_claim_2_conc}
        }{\leq}
        \max \left\{ 1 ~,~
        \min \left\{ 1 + e ~,~
        1+ \Diam(\Gamma) \right\}
        \right\}
        \left|\left| \psi_{[n-1]} 
        \right|\right|_{\Lip(n, \Gamma,W)} \\
        &\stackrel{
        \eqref{lip_k_nested_lip_k_norm_bound}
        }{\leq}
        \min \left\{ 1 + e ~,~
        1+\Diam(\Gamma)^{\rho - n}
        \right\} 
        \min \left\{ 1 + e ~,~
        1 + \Diam(\Gamma) \right\}
        || \psi ||_{\Lip(\rho, \Gamma,W)} \\
        &= 
        \left( 1 + \min \left\{ e ~,~
        \Diam(\Gamma)^{\rho - n}
        \right\}\right) \left( 1 + 
        \min \left\{ e ~,~
        \Diam(\Gamma) \right\} \right)
        || \psi ||_{\Lip(\rho, \Gamma,W)}.
\end{align*}
We can now appeal to Claim \ref{lip_k_nested_claim_2}
for $\cd := \Gamma$, $m := n-2$, $\lambda := n -1$, 
and $\sigma := n-2$.
Proceeding inductively as $r = 0,1, \ldots ,n-1$
increases, we establish via applying Claim  
\ref{lip_k_nested_claim_2} for 
$\cd := \Gamma$, $m:= n-r$, $\lambda := n-(r-1)$, and 
$\sigma := n-r$ that for every
$r \in \{0, 1 , \ldots , n-1\}$ we have that
\beq
    \label{lip_k_nested_lip_k-r_norm_bound}
        \left|\left| \psi_{[n-r-1]} 
        \right|\right|_{\Lip(n-r, \Gamma,W)}
        \leq
        \left( 1 + \min \left\{ e ~,~
        \Diam(\Gamma)^{\rho - n}
        \right\}\right) 
        \left( 1 + \min \left\{ e ~,~
        \Diam(\Gamma) \right\} \right)^r
        || \psi ||_{\Lip(\rho, \Gamma,W)}.
\eeq
Taking $r := n - (q+1)$ in 
\eqref{lip_k_nested_lip_k-r_norm_bound} yields that
\beq
    \label{lip_k_nested_lip_q+1_norm_bound}
        \left|\left| \psi_{[q]} 
        \right|\right|_{\Lip(q+1, \Gamma,W)}
        \leq
        \left( 1 + \min \left\{ e ~,~
        \Diam(\Gamma)^{\rho - n}
        \right\}\right) \left( 1 + 
        \min \left\{ e ~,~
        \Diam(\Gamma) \right\} \right)^{n - (q+1)}
        || \psi ||_{\Lip(\rho, \Gamma,W)}.
\eeq
As $\th \in (q,q+1]$, we can appeal to 
Claim \ref{lip_k_nested_claim_1} with
$\cd := \Gamma$, $m:=q$, $\lambda := q +1$ and
$\sigma := \th$ to deduce via 
\eqref{lip_k_nested_claim_1_conc} that
\beq
    \label{lip_k_nested_lip_eta_norm_bound_q+1}
        \left|\left| \psi_{[q]} 
        \right|\right|_{\Lip(\th,\Gamma,W)}
        \leq
        \max \left\{ 1 ~,~
        \min \left\{ 1 + e ~,~
        \Diam(\Gamma)^{q+1-\th }
        \right\} \right\}
        \left|\left| \psi_{[q]} 
        \right|\right|_{\Lip(q+1, \Gamma,W)}.
\eeq
Together, \eqref{lip_k_nested_lip_q+1_norm_bound} and 
\eqref{lip_k_nested_lip_eta_norm_bound_q+1} yield that
\beq
    \label{lip_k_nested_lip_eta_norm_bound_final}
        \left|\left| \psi_{[q]} 
        \right|\right|_{\Lip(\th,\Gamma,W)}
        \leq
        C_2
        || \psi ||_{\Lip(\rho, \Gamma,W)}
\eeq
for $C_2 > 0$ defined by
$$C_2 := \max \left\{ 1 ,
    		\min \left\{ 1 + e,
            \Diam(\Gamma)^{q+1-\th }
            \right\}\right\}
            \left( 1 +  \min \left\{e, 
            \Diam(\Gamma)^{\rho - n}
            \right\} \right)
            \left( 1 + \min \left\{
            e , \Diam(\Gamma) \right\}
            \right)^{n-(q+1)}$$
as claimed in \eqref{lip_k_nested_C2}. 
The estimates 
\eqref{lip_k_nested_lip_eta_norm_1&2}
and
\eqref{lip_k_nested_lip_eta_norm_bound_final}
combine to give
$$\left|\left| \psi_{[q]} 
        \right|\right|_{\Lip(\th,\Gamma,W)}
        \leq
        \min \left\{ C_1 ~,~ C_2 \right\}
        || \psi ||_{\Lip(\rho, \Gamma,W)}$$
where
$$ C_1 = \max \left\{ 1 ~,~
        \min \left\{ 1 + e ~,~
            \Diam(\Gamma)^{\rho - \eta}
            +
            \sum_{j=q+1}^{n}
            \frac{\Diam(\Gamma)^{j-\th}}{(j-q)!}
            \right\}
            \right\}$$
and
$$C_2 := \max \left\{ 1 ,
    		\min \left\{ 1 + e,
            \Diam(\Gamma)^{q+1-\th }
            \right\}\right\}
            \left( 1 +  \min \left\{e, 
            \Diam(\Gamma)^{\rho - n}
            \right\} \right)
            \left( 1 + \min \left\{
            e , \Diam(\Gamma) \right\}
            \right)^{n-(q+1)}$$
as claimed in \eqref{lip_k_nested_norm_est}.

Finally, since $C_1 \leq 1 + e$, 
\eqref{lip_k_nested_norm_est_0} 
and \eqref{lip_k_nested_norm_est} 
combine to yield that, for any 
$\th \in (0, \rho)$, we have the estimate
$\left|\left| \psi_{[q]} 
\right|\right|_{\Lip(\th,\Gamma,W)}
\leq (1+e)||\psi||_{\Lip(\rho,\Gamma,W)}$
as claimed in \eqref{lip_k_nested_norm_est_weak}.
This completes the proof of Lemma
\ref{lip_k_spaces_nested_lemma}.
\end{proof}

\section{Local Lipschitz Bounds}
\label{local_lip_bds_sec}
In this section we establish some local estimates arising 
as consequences from knowing the Lipschitz norm of a function 
is small when the domain is taken to be a single point in a
similar spirit to Lemma 1.13 in \cite{Bou15}.
The constants appearing in our estimates are more convenient 
for our purposes.

We first record the following result relating the pointwise
properties of a Lipschitz function at one point to its 
pointwise values at another.
The precise result is the following.

\begin{lemma}[Pointwise Estimates]
\label{lip_k_growth_rates_lemma}
Let $V$ and $W$ be Banach spaces, and assume that the 
tensor powers of $V$ are all equipped with admissible 
norms (cf. Definition \ref{admissible_tensor_norm}).
Assume $\Gamma \subset V$ is closed with $p \in \Gamma$.
Let $A, \rho > 0$, $r_0 \geq 0$, and  
$n \in \Z_{\geq 0}$ such that $\rho \in (n,n+1]$.
Let $F = \left( F^{(0)} , \ldots , F^{(n)}
\right) \in \Lip(\rho, \Gamma,W)$ with 
$|| F ||_{\Lip(\rho, \Gamma,W)} \leq A$.
For every $j \in \{0, \ldots , n\}$
let $R^{F}_j : \Gamma \times \Gamma \to 
\cl(V^{\otimes j};W)$ denote the remainder term 
associated to $F^{(j)}$, defined for 
$x,y \in \Gamma$ and $v \in V^{\otimes j}$ by
\beq
    \label{lip_k_growth_rate_remainder_def} 
        R_j^{F}(x,y)[v] := F^{(j)}(y)[v]
        - \sum_{s=0}^{n-j}\frac{1}{s!}
        F^{(j+s)}(x) \left[ v \otimes 
        (y-x)^{\otimes s} \right].
\eeq
Then for for every $l \in \{0, \ldots , n\}$,
any $x , y \in \Gamma$, and any $\th \in (n, \rho)$,
we have that
\beq
	\label{lip_k_growth_rate_remainder_conc}
		\left|\left| R^{F}_l(x,y)
		\right|\right|_{\cl ( V^{\otimes l} ;W)}
		\leq
		A \left(\dist(x, p) + \dist(y,p)
		\right)^{\rho - \th}
		|| y - x ||_V^{\th - l}.
\eeq
Further, suppose $q \in \{0, \ldots , n\}$ and that
for every $s \in \{0, \ldots , q\}$ we have 
$\left|\left| F^{(s)}(p) 
\right|\right|_{\cl(V^{\otimes s};W)} \leq r_0$. 
Then for any $l \in \{0, \ldots , q\}$ and any 
$x \in \Gamma$ we have that
\beq
    \label{lip_k_growth_rate_conc}
        \left|\left| F^{(l)}(x) 
        \right|\right|_{\cl(V^{\otimes l};W)} 
        \leq 
        \min \left\{ A ~,~   
        A \left[ \dist(x,p)^{\rho - l} + 
        S_{l,q}(x,p) \right] + 
        r_0 \sum_{j=0}^{q-l}
        \frac{1}{j!} \dist(x,p)^j\right\}
\eeq
where
\beq
    \label{lip_k_growth_rate_C0_S_def}
        S_{l,q}(x,p) := \twopartdef
        {\sum_{j=q+1-l}^{n-l} \frac{1}{j!}
        \dist(x,p)^j}{q < n}
        {0}{q=n.}
\eeq
\end{lemma}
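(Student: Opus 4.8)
The plan is to derive both conclusions directly from the two defining inequalities of $\Lip(\rho,\Gamma,W)$ recorded in Definition \ref{lip_k_def}: the pointwise bounds \eqref{lip_k_bdd} give $\left\| F^{(j)}(z) \right\|_{\cl(V^{\otimes j};W)} \leq \| F \|_{\Lip(\rho,\Gamma,W)} \leq A$ for every $j \in \{0,\ldots,n\}$ and $z \in \Gamma$, while the remainder estimates \eqref{lip_k_remain_holder} give $\left\| R^{F}_j(x,y) \right\|_{\cl(V^{\otimes j};W)} \leq A\|y-x\|_V^{\rho-j}$ for all $x,y \in \Gamma$. No appeal to Lemma \ref{lip_k_remainder_terms_pointwise_ests} or Lemma \ref{lip_k_spaces_nested_lemma} is needed; the whole argument is elementary.

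For \eqref{lip_k_growth_rate_remainder_conc}, fix $l \in \{0,\ldots,n\}$, points $x,y \in \Gamma$, and $\th \in (n,\rho)$. Writing $\rho - l = (\rho-\th)+(\th-l)$ and factoring the power of $\|y-x\|_V$, we obtain $\left\| R^{F}_l(x,y) \right\|_{\cl(V^{\otimes l};W)} \leq A\|y-x\|_V^{\rho-l} = A\|y-x\|_V^{\rho-\th}\,\|y-x\|_V^{\th-l}$. Since $\th < \rho$ we have $\rho - \th > 0$, so $t \mapsto t^{\rho-\th}$ is nondecreasing on $[0,\infty)$; combining this with the triangle inequality $\|y-x\|_V \leq \dist(x,p) + \dist(y,p)$ gives $\|y-x\|_V^{\rho-\th} \leq \left( \dist(x,p) + \dist(y,p) \right)^{\rho-\th}$, which is precisely \eqref{lip_k_growth_rate_remainder_conc}.

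For \eqref{lip_k_growth_rate_conc}, the bound of $\left\| F^{(l)}(x) \right\|_{\cl(V^{\otimes l};W)}$ by $A$ is immediate from \eqref{lip_k_bdd}, so it remains to establish the second term of the minimum. Fix $l \in \{0,\ldots,q\}$ and $x \in \Gamma$, and use \eqref{lip_k_growth_rate_remainder_def} with base point $p$ and argument $x$ to write, for $v \in V^{\otimes l}$,
$$ F^{(l)}(x)[v] = R^{F}_l(p,x)[v] + \sum_{s=0}^{n-l} \frac{1}{s!} F^{(l+s)}(p)\left[ v \otimes (x-p)^{\otimes s} \right]. $$
Split the sum at $s = q-l$. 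For $0 \leq s \leq q-l$ the hypothesis gives $\left\| F^{(l+s)}(p) \right\|_{\cl(V^{\otimes (l+s)};W)} \leq r_0$, and for $q-l+1 \leq s \leq n-l$ --- a nonempty range precisely when $q < n$, which is exactly the case in which $S_{l,q}(x,p)$ is the nontrivial sum in \eqref{lip_k_growth_rate_C0_S_def} --- the bound \eqref{lip_k_bdd} gives $\left\| F^{(l+s)}(p) \right\|_{\cl(V^{\otimes (l+s)};W)} \leq A$. Estimating $\left\| F^{(l+s)}(p)\left[ v \otimes (x-p)^{\otimes s} \right] \right\|_W \leq \left\| F^{(l+s)}(p) \right\|_{\cl(V^{\otimes (l+s)};W)}\,\|v\|_{V^{\otimes l}}\,\dist(x,p)^s$ via admissibility of the tensor norms, using $\left\| R^{F}_l(p,x) \right\|_{\cl(V^{\otimes l};W)} \leq A\,\dist(x,p)^{\rho-l}$, and taking the supremum over $v \in V^{\otimes l}$ with $\|v\|_{V^{\otimes l}} = 1$ yields exactly the bound $A\left[ \dist(x,p)^{\rho-l} + S_{l,q}(x,p) \right] + r_0 \sum_{j=0}^{q-l} \frac{1}{j!}\dist(x,p)^j$.

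The argument is short, and the only point requiring care is the index bookkeeping in the split above: one must check that $0 \leq l \leq q \leq n$ forces $n-l \geq 0$ and $q-l \geq 0$, and that the high-order part $q-l+1 \leq s \leq n-l$ of the Taylor sum is empty exactly when $q = n$, matching the two cases in \eqref{lip_k_growth_rate_C0_S_def}. Beyond this there is no substantive obstacle.
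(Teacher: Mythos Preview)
Your proof is correct and follows essentially the same approach as the paper: both parts are obtained directly from the two defining inequalities \eqref{lip_k_bdd} and \eqref{lip_k_remain_holder}, with the remainder estimate coming from the factorisation $\|y-x\|_V^{\rho-l} = \|y-x\|_V^{\rho-\th}\|y-x\|_V^{\th-l}$ together with the triangle inequality, and the pointwise estimate from expanding $F^{(l)}(x)$ about $p$ and splitting the Taylor sum at $s=q-l$. Your handling of the case distinction $q=n$ versus $q<n$ matches the paper's treatment exactly.
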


\begin{proof}[Proof of Lemma 
\ref{lip_k_growth_rates_lemma}]
Let $V$ and $W$ be Banach spaces, and assume that the 
tensor powers of $V$ are all equipped with admissible 
norms (cf. Definition \ref{admissible_tensor_norm}).
Assume that $\Gamma \subset V$ is closed and that 
$\rho > 0$ with $n \in \Z_{\geq 0}$ such that
$\rho \in (n,n+1]$. Suppose that, for 
$l \in \{0 , \ldots , n \}$, we have functions 
$F^{(l)} : \Gamma \to \cl( V^{\otimes l} ; W)$
such that $F = (F^{(0)} ,\ldots ,F^{(n)})
\in \Lip(\rho, \Gamma, W)$ with 
$|| F ||_{\Lip(\rho,\Gamma,W)} \leq A$.
For each $j \in \{0, \ldots , n\}$ let 
$R^{F}_j : \Gamma \times \Gamma \to 
\cl(V^{\otimes j};W)$ denote the remainder term 
associated to $F^{(j)}$, defined for 
$x,y \in \Gamma$ and $v \in V^{\otimes j}$ by
\beq
    \label{lip_k_growth_rate_remainder_def_pf} 
        R_j^{F}(x,y)[v] := F^{(j)}(y)[v]
        - \sum_{s=0}^{n-j}\frac{1}{s!}
        F^{(j+s)}(x) \left[ v \otimes 
        (y-x)^{\otimes s} \right].
\eeq
As a consequence of $F \in \Lip(\rho,\Gamma,W)$
with $|| F ||_{\Lip(\rho,\Gamma,W)} \leq A$, 
whenever $l \in \{0, \ldots , n\}$ and $x,y \in \Gamma$,
we have the bounds (cf. \eqref{lip_k_bdd} and 
\eqref{lip_k_remain_holder})
\beq
    \label{lip_k_growth_rate_bdd}
        (\bI) \quad \left|\left| F^{(l)}(x)
        \right|\right|_{\cl(V^{\otimes l}; W)}
        \leq A
        \qquad \text{and} \qquad
        (\bII) \quad \left|\left| R^{F}_l(x,y) 
        \right|\right|_{\cl(V^{\otimes l};W)}
        \leq A || y - x ||_V^{\rho - l}.
\eeq
If $l \in \{0, \ldots , n\}$, $x,y \in \Gamma$, 
and $\th \in (n , \rho)$, we use (\bII) of 
\eqref{lip_k_growth_rate_bdd} to compute that
\begin{align*}
	\left|\left| R^{F}_l(x,y) 
	\right|\right|_{\cl (V^{\otimes l} ; W)}
		\stackrel{(\bII) \text{ of } 
		\eqref{lip_k_growth_rate_bdd}}
		{\leq}
		A || y - x ||_V^{\rho - l}	&=
		A || y - x ||_V^{\rho - \th} 
		||y-x||_V^{\th - l} \\
		&\leq
		A \left( ||x-p||_V + ||y-p||_V \right)^{\rho - \th}
		|| y - x ||_V^{\th - l} \\
		&=
		A \left( \dist(x,p) + \dist(y,p) 
		\right)^{\rho - \th}
		|| y - x ||_V^{\th - l}
\end{align*}
as claimed in \eqref{lip_k_growth_rate_remainder_conc}.

Now suppose that $q \in \{0 , \ldots , n\}$ and that
for every $s \in \{0, \ldots , q\}$ we have 
$\left|\left| F^{(s)}(p) 
\right|\right|_{\cl(V^{\otimes s};W)} \leq r_0$. 
Given $l \in \{0 , \ldots , q\}$,
$x \in \Gamma$, and $v \in V^{\otimes l}$, 
recalling that the tensor powers
of $V$ are equipped with admissible norms (cf. 
Definition \ref{admissible_tensor_norm}), we may use 
\eqref{lip_k_growth_rate_remainder_def_pf} and (\bII)
of \eqref{lip_k_growth_rate_bdd} to obtain that
\beq
    \label{lip_k_growth_rate_C0_step1}
        \left|\left|F^{(l)}(x)[v]
        \right|\right|_W
        \leq
        \sum_{s=0}^{n - l} \frac{1}{s!}
        \left|\left|F^{(s)}(p)
        \right|\right|_{\cl(V^{\otimes s};W)}
        ||x-p||_V^s || v ||_{V^{\otimes l}}
        + A ||x-p||_V^{\rho - l}
        ||v ||_{V^{\otimes l}}.
\eeq      
If $q = n$ then \eqref{lip_k_growth_rate_C0_step1}
tells us that 
\beq
    \label{lip_k_growth_rate_C0_step2}
        \left|\left|F^{(l)}(x)[v]
        \right|\right|_W
        \leq
        \left(r_0 \sum_{s=0}^{q - l} \frac{1}{s!}
        ||x-p||_V^s
        + A ||x-p||_V^{\rho - l} \right)
        ||v||_{V^{\otimes l}}.
\eeq 
Whilst if $q < n$, we deduce from
\eqref{lip_k_growth_rate_C0_step1} that
\beq
    \label{lip_k_growth_rate_C0_step3}
        \left|\left|F^{(l)}(x)[v]
        \right|\right|_W 
        \leq
        \left(r_0\sum_{s=0}^{q - l} \frac{1}{s!}
        ||x-p||_V^s
        + A \sum_{s=q-l+1}^{n-l} \frac{1}{s!}
        ||x-p||_V^s
        + A ||x-p||_V^{\rho - l} \right) 
        ||v||_{V^{\otimes l}}.
\eeq 
If we let $S_{l,q}(x,p)$ be defined as in 
\eqref{lip_k_growth_rate_C0_S_def},
then \eqref{lip_k_growth_rate_C0_step2} and
\eqref{lip_k_growth_rate_C0_step3} combine to yield 
that 
\beq
    \label{lip_k_growth_rate_C0_step4}
        \left|\left|F^{(l)}(x)[v]
        \right|\right|_W 
        \leq
        \left(r_0 \sum_{s=0}^{q-l}
        \frac{1}{s!}\dist(x,p)^s
        + A\left[ \dist(x,p)^{\rho} + 
        S_{l,q}(x,p) \right] \right)
        ||v||_{V^{\otimes l}}.
\eeq
Taking the supremum over
$v \in V^{\otimes l}$ with unit $V^{\otimes l}$
norm in \eqref{lip_k_growth_rate_C0_step4}
yields the second estimate claimed in 
\eqref{lip_k_growth_rate_conc}. 
The first estimate claimed in 
\eqref{lip_k_growth_rate_conc} follows from (\bI) in 
\eqref{lip_k_growth_rate_bdd}.
This completes the proof of Lemma 
\ref{lip_k_growth_rates_lemma}.
\end{proof}
\vskip 4pt 
\noindent 
Our aim for the remainder of this section is to strengthen 
the pointwise estimates obtained in Lemma 
\ref{lip_k_growth_rates_lemma} to full 
Lipschitz norm bounds on a local neighbourhood of the 
point $p$. 
The first local Lipschitz norm bounds we can establish 
in a neighbourhood of a given point are recorded in the 
following result.

\begin{lemma}[Local Lipschitz Bounds I]
\label{lip_k_lip_norm_ball_bound_lemma}
Let $V$ and $W$ be Banach spaces, 
and assume that the tensor powers of $V$ are all 
equipped with admissible norms (cf. Definition 
\ref{admissible_tensor_norm}).
Assume that $\Gamma \subset V$ is non-empty and 
closed, and that $z \in \Gamma$. 
Let $A , \rho > 0$ with $n \in \Z_{\geq 0}$ 
such that $\rho \in (n,n+1]$, $r_0 \in [0,A]$, 
and $\th \in (n,\rho)$. 
Suppose that 
$F = \left( F^{(0)} , \ldots , F^{(n)} \right) 
\in \Lip(\rho, \Gamma,W)$ satisfies 
that $ ||F||_{\Lip(\rho,\Gamma,W)} \leq A$,
and that for every $j \in \{0,\ldots,n\}$ we have 
the bound $ \left|\left| F^{(j)}(z) 
\right|\right|_{\cl(V^{\otimes j};W)} \leq r_0$.
Then for any $\de \in [0,1]$ we have that 
\beq
	\label{lip_k_lip_norm_ball_bound_lemma_conc}
		|| F ||_{\Lip(\th,\Omega,W)} 
		\leq \max \left\{
		(2\de)^{\rho - \th} A ~,~
		\min \left\{ A ~,~
		A \de^{\rho -n} + r_0 e^{\de}
		\right\} \right\}
\eeq
where $\Omega := \Gamma ~\cap~ \ovB_V (z,\de)$.
\end{lemma}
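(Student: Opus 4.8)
The plan is to exhibit the constant $M := \max\{(2\de)^{\rho-\th}A,\ \min\{A,\ A\de^{\rho-n}+r_0 e^{\de}\}\}$ as an admissible value in Definition \ref{lip_k_def} for $F$ regarded as a candidate element of $\Lip(\th,\Omega,W)$. First I would note that $\Omega = \Gamma \cap \ovB_V(z,\de)$ is closed and contains $z$, so $\Lip(\th,\Omega,W)$ is defined, and that since $\th\in(n,\rho)\subseteq(n,n+1]$ the integer associated to $\th$ is the same $n$ as for $\rho$; hence the relevant tuple is again $(F^{(0)},\ldots,F^{(n)})$ and the remainder terms $R^F_l$ on $\Omega\times\Omega$ are the restrictions of those for $F$ on $\Gamma\times\Gamma$. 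It then remains to verify the pointwise bounds \eqref{lip_k_bdd} and the remainder bounds \eqref{lip_k_remain_holder} with this $M$ and exponents $\th-l$.

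For the pointwise bounds I would apply the second conclusion of Lemma \ref{lip_k_growth_rates_lemma} with $p:=z$ and $q:=n$, which is licit since the hypotheses give $\|F^{(s)}(z)\|_{\cl(V^{\otimes s};W)}\le r_0$ for all $s\in\{0,\ldots,n\}$ and $S_{l,n}(x,z)=0$: for every $l\in\{0,\ldots,n\}$ and $x\in\Gamma$ this yields $\|F^{(l)}(x)\|_{\cl(V^{\otimes l};W)}\le\min\{A,\ A\dist(x,z)^{\rho-l}+r_0\sum_{j=0}^{n-l}\tfrac{1}{j!}\dist(x,z)^{j}\}$. For $x\in\Omega$ we have $\dist(x,z)\le\de\le1$, so $\dist(x,z)^{\rho-l}\le\dist(x,z)^{\rho-n}\le\de^{\rho-n}$ (using $\rho-l\ge\rho-n>0$ and $\dist(x,z)\le1$) and $\sum_{j=0}^{n-l}\tfrac{1}{j!}\dist(x,z)^{j}\le\sum_{j\ge0}\tfrac{\de^{j}}{j!}=e^{\de}$; combining gives $\|F^{(l)}(x)\|_{\cl(V^{\otimes l};W)}\le\min\{A,\ A\de^{\rho-n}+r_0 e^{\de}\}\le M$.

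For the remainder bounds I would apply the first conclusion of Lemma \ref{lip_k_growth_rates_lemma}, namely \eqref{lip_k_growth_rate_remainder_conc}, again with $p:=z$: for $l\in\{0,\ldots,n\}$ and $x,y\in\Gamma$, $\|R^F_l(x,y)\|_{\cl(V^{\otimes l};W)}\le A(\dist(x,z)+\dist(y,z))^{\rho-\th}\|y-x\|_V^{\th-l}$. Restricting to $x,y\in\Omega$ gives $\dist(x,z)+\dist(y,z)\le2\de$, and since $\rho-\th>0$ this is at most $(2\de)^{\rho-\th}A\|y-x\|_V^{\th-l}\le M\|y-x\|_V^{\th-l}$. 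Together with the previous paragraph this shows $M$ satisfies the requirements of Definition \ref{lip_k_def}, hence $\|F\|_{\Lip(\th,\Omega,W)}\le M$, which is \eqref{lip_k_lip_norm_ball_bound_lemma_conc}.

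I do not anticipate any real obstacle: the lemma is essentially a repackaging of Lemma \ref{lip_k_growth_rates_lemma} together with the two elementary monotonicity estimates made available by $\de\le1$ and $\rho>\th>n$. The only mild care-points are the bookkeeping that $\th$ and $\rho$ have the same integer truncation level (so the same components and remainder terms appear), the handling of the degenerate case $\de=0$ where $\Omega=\{z\}$ and both sides collapse to $r_0$, and remembering that $r_0\le A$ makes $\min\{A,r_0\}=r_0$ there.
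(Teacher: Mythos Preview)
Your proposal is correct and follows essentially the same approach as the paper's proof: both apply Lemma \ref{lip_k_growth_rates_lemma} with $p:=z$ and $q:=n$ to obtain the pointwise bounds, use its remainder estimate \eqref{lip_k_growth_rate_remainder_conc} for the H\"older part, and then exploit $\dist(x,z)\le\de\le1$ and $\dist(x,z)+\dist(y,z)\le2\de$ together with the monotonicity in the exponents. Your explicit remarks that $\th$ and $\rho$ share the same integer floor $n$ (so the same tuple and remainder terms are used) and on the degenerate case $\de=0$ are helpful bookkeeping points that the paper leaves implicit.
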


\begin{proof}[Proof of Lemma 
\ref{lip_k_lip_norm_ball_bound_lemma}]
Let $V$ and $W$ be Banach spaces, 
and assume that the tensor powers of $V$ are all 
equipped with admissible norms (cf. Definition 
\ref{admissible_tensor_norm}).
Assume that $\Gamma \subset V$ is non-empty and 
closed, and that $z \in \Gamma$. 
Let $A , \rho > 0$ with $n \in \Z_{\geq 0}$ 
such that $\rho \in (n,n+1]$, $r_0 \in [0,A]$, 
and $\th \in (n,\rho)$. Suppose that 
$F = \left( F^{(0)} , \ldots , F^{(n)} \right) 
\in \Lip(\rho, \Gamma,W)$ satisfies 
that $ ||F||_{\Lip(\rho,\Gamma,W)} \leq A$,
and that for every $j \in \{0,\ldots,n\}$ we have 
the bound $ \left|\left| F^{(j)}(z) 
\right|\right|_{\cl(V^{\otimes j};W)} \leq r_0$.
For each $l \in \{0, \ldots , n\}$
let $R^{F}_l : \Gamma \times \Gamma 
\to \cl ( V^{\otimes l} ; W) $ be defined for 
$x,y \in \Gamma$ and $v \in V^{\otimes l}$ by
\beq
    \label{lip_k_extension_lemma_psi_tay_exp}
        R^{F}_l(x,y)[v] := 
        F^{(l)}(y)[v] -
        \sum_{j=0}^{n-l} \frac{1}{j!}
        F^{(j + l)}(x) \left[ 
        v \otimes (y - x)^{\otimes j} 
        \right].
\eeq
An application of Lemma 
\ref{lip_k_growth_rates_lemma},
with $A$, $r_0$, $\rho$, $n$ and $\th$ here playing 
the same roles there, yields that for each  
$l \in \{0, \ldots , n\}$ and any $x \in \Sigma$ we have
(cf. \eqref{lip_k_growth_rate_conc} for $q=n$)
\beq
    \label{lip_k_ball_lemma_psi_l_bounds}
        \left|\left| F^{(l)}(x)
        \right|\right|_{\cl (V^{\otimes l} ; W)}
        \leq
        \min \left\{ A ~,~
        A \dist(x,p)^{\rho -l}
        +
        r_0 \sum_{s=0}^{n-l} \frac{1}{s!} 
        \dist(x,p)^s \right\}
\eeq
and (cf. \eqref{lip_k_growth_rate_remainder_conc})
\beq
	\label{lip_k_ball_lemma_remainder_bounds}
		\left|\left| R^{F}_l(x,y)
		\right|\right|_{\cl ( V^{\otimes l} ; W)}
		\leq
		A \left(
		\dist(x,p) + \dist(y,p)
		\right)^{\rho - \th}
		|| y - x||^{\th - l}_V.
\eeq
Now let $\de \in [0,1]$ and define 
$\Omega := \ovB_V(z,\de) ~\cap~ \Gamma 
\subset \Gamma$. 
Then given any $l \in \{0 , \ldots , n\}$ and any
$x,y \in \Omega$,
\eqref{lip_k_ball_lemma_psi_l_bounds} tells us that 
\beq
    \label{lip_k_ball_lemma_psi_l_bounds_A}
        \left|\left| F^{(l)}(x)
        \right|\right|_{\cl (V^{\otimes l} ; W)}
        \leq
        \min \left\{ A ~,~
        A \de^{\rho -l}
        +
        r_0 \sum_{s=0}^{n-l} \frac{\de^s}{s!} 
         \right\}
         \leq 
         \min \left\{ A ~,~
         A \de^{\rho - n} + r_0 e^{\de}
         \right\},
\eeq
since $\de \in [0,1]$ means $\de^{\rho - l} 
\leq \de^{\rho - n}$
for every $l \in \{0, \ldots , n\}$,
whilst \eqref{lip_k_ball_lemma_remainder_bounds} 
tells us that
\beq
	\label{lip_k_ball_lemma_remainder_bounds_A}
		\left|\left| R^{F}_l(x,y)
		\right|\right|_{\cl ( V^{\otimes l} ; W)}
		\leq
		A \left( 2\de\right)^{\rho - \th}
		|| y - x||^{\th - l}_V.
\eeq 
The estimates \eqref{lip_k_ball_lemma_psi_l_bounds_A} 
and \eqref{lip_k_ball_lemma_remainder_bounds_A}
allow us to conclude that $F \in \Lip(\th,\Omega,W)$
with
$$ || F ||_{\Lip(\th,\Omega,W)} \leq 
\max \left\{ \left( 2\de\right)^{\rho - \th} A  ~,~ 
\min \left\{ A ~,~ A \de^{\rho - n} + r_0 e^{\de}
\right\} \right\} $$
as claimed in \eqref{lip_k_lip_norm_ball_bound_lemma_conc}.
This completes the proof of Lemma 
\ref{lip_k_lip_norm_ball_bound_lemma}.
\end{proof}
\vskip 4pt
\noindent
Extending the local Lipschitz estimates of 
Lemma \ref{lip_k_lip_norm_ball_bound_lemma}
to the setting, in the notation of 
Lemma \ref{lip_k_lip_norm_ball_bound_lemma}, that 
$\th \leq n < \rho \leq n+1$ is more challenging.
We achieve this by combining the 
\textit{Lipschitz Nesting} Lemma \ref{lip_k_spaces_nested_lemma} 
from Section \ref{nested_embed_sec} with 
Lemma \ref{lip_k_lip_norm_ball_bound_lemma}.
The resulting local Lipschitz bounds are precisely recorded in 
the following result.

\begin{lemma}[Local Lipschitz Bounds II]
\label{lip_k_func_ext_lemma_gen_eta}
Let $V$ and $W$ be Banach spaces, and assume 
that the tensor powers of $V$ are all 
equipped with admissible norms (cf. Definition 
\ref{admissible_tensor_norm}). 
Assume that $\Gamma \subset V$ is a non-empty closed
subset with $z \in \Gamma$. 
Let $A >0$,  $r_0 \in [0,A)$,  $\rho > 1$
with $n \in \Z_{\geq 1}$ such that $\rho \in (n,n+1]$, 
and $\th \in (0,n]$ with $q \in \{0, \ldots , n-1\}$
such that $\th \in (q,q+1]$. Suppose
that $F \in \Lip(\rho, \Gamma,W)$ satisfies
that $ || F||_{\Lip(\rho,\Gamma,W)} \leq A$,
and that for every $j \in \{0, \ldots , n\}$ we have 
the bound $ \left|\left| F^{(j)}(z) 
\right|\right|_{\cl(V^{\otimes j};W)} \leq r_0$. 
Given any $\de \in [0,1]$  
we have, for $\Omega := \ovB_V(z,\de) ~\cap~
\Gamma$, that
\beq
    \label{lip_k_ext_gen_eta_lemma_eta_lip_bound}
        \left|\left| F_{[q]} 
        \right|\right|_{\Lip(\th,\Omega,W)}
		\leq
		\max \left\{ (2\de)^{q+1-\th} E_{n-b_q}, 
		\min \left\{ E_{n-b_q} ~,~ \de E_{n-b_q}  + 
		r_0 e^{\de} \right\} \right\}
\eeq
where $F_{[q]} = \left( F^{(0)},\ldots ,F^{(q)}\right)$,
$b_q := n - (q+1)$, and for 
$s \in \{0, \ldots , n - 1 \}$ $E_{n-s}$ is 
inductively defined by
\beq
	\label{lip_k_ball_control_Es_statement}
		E_{n-s} := \twopartdef
		{\left( 1 + (2\de)^{\frac{\rho - n}{2}} \right)
		\max \left\{ (2\de)^{\frac{\rho - n}{2}} A ~,~ 
		\min \left\{ A ~,~ \de^{\rho - n} A + 
		r_0 e^{\de} \right\} \right\} }{s=0}
		{\left( 1 + \sqrt{2\de} \right) 
		\max \left\{ \sqrt{2 \de} E_{n-(s-1)} ~,~ 
		\min \left\{ E_{n-(s-1)} ~,~ \de E_{n-(s-1)} + 
		r_0 e^{\de} \right\} \right\}}{s \geq 1.}
\eeq
Consequently, if $r_0 = 0$ we can conclude that
\beq
	\label{lip_k_ball_control_r0=0_conc}
		\left|\left| F_{[q]} 
		\right|\right|_{\Lip(\th,\Omega,W)} \leq 
		\left( 1 + (2\de)^{\frac{\rho-n}{2}} \right) 
		\left( 1+\sqrt{2\de} \right)^{n-(q+1)}
		(2\de)^{
		\frac{\rho - \th}{2} + \frac{q+1-\th}{2}} A.
\eeq
If $0 < r_0 < A$ then there exists 
$\de_{\ast} = \de_{\ast} (A ,r_0 ,\rho) > 0$ 
such that if we additionally impose that 
$\de \in [0 , \de_{\ast}]$ then we may conclude that
\beq
	\label{lip_k_ball_control_r0>0_conc}
		\left|\left| F_{[q]} 
		\right|\right|_{\Lip(\th,\Omega,W)} \leq
		\max \left\{ (2\de)^{q+1-\th} \e
		~,~
		\min \left\{ \e ~,~ \de \e 
		+ r_0 e^{\de}
		\right\}
		\right\}
\eeq
for $\e = \e(A,r_0,\rho,\th, \de)>0$ defined by
\beq
	\label{lip_k_ball_norm_cal_e_def}
		\e :=
		\left( 1 + (2\de)^{\frac{\rho-n}{2}} \right)
		\left( 1 + \sqrt{2\de} \right)^{n - (q+1)} 
		\left( \de^{\rho - (q+1)} A
		+ r_0 \de^{n-(q+1)} e^{\de} \right)
		+
		\bX_{n-(q+1)}(\de)
\eeq
where, for $t \in \{0 , \ldots , n-1 \}$, 
the quantity $\bX_t(\de)$ is defined by
\beq
	\label{lip_k_ball_norm_control_r0>0_bX_t_statement}
		\bX_t(\de) :=
		\twopartdef
		{0}{t=0}
		{\left( 1 + \sqrt{2\de}\right)
		r_0e^{\de}
		\sum_{j=0}^{t-1} \de^{j} 
		\left(1 + \sqrt{2\de}\right)^{j}}{t \geq 1.}
\eeq
\end{lemma}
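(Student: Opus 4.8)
The plan is to prove Lemma~\ref{lip_k_func_ext_lemma_gen_eta} by descending in the regularity parameter one integer at a time, from $\Lip(\rho)$ down through $\Lip(n),\Lip(n-1),\dots,\Lip(q+1)$ and finally to $\Lip(\th)$, combining at each integer-lowering stage one application of Lemma~\ref{lip_k_lip_norm_ball_bound_lemma} (to pass to a half-integer exponent within the current integer floor) with one application of the \textit{Lipschitz Nesting} Lemma~\ref{lip_k_spaces_nested_lemma} (to drop the floor by one). The quantities $E_{n-s}$ defined in \eqref{lip_k_ball_control_Es_statement} are precisely the running Lipschitz-norm bounds produced by this iteration. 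Throughout, write $\Omega:=\ovB_V(z,\de)\cap\Gamma$ and note $\Diam(\Omega)\le 2\de$, and recall $b_q=n-(q+1)$, so $E_{n-b_q}=E_{q+1}$.

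First I would apply Lemma~\ref{lip_k_lip_norm_ball_bound_lemma} with $\rho,n,r_0$ as given, target exponent $\tfrac{\rho+n}{2}\in(n,\rho)$, and $A$ as given, obtaining $\|F\|_{\Lip(\frac{\rho+n}{2},\Omega,W)}\le\max\{(2\de)^{\frac{\rho-n}{2}}A,\min\{A,\de^{\rho-n}A+r_0e^{\de}\}\}$; then I would invoke the \textit{Lipschitz Nesting} Lemma~\ref{lip_k_spaces_nested_lemma} to pass from $\Lip(\tfrac{\rho+n}{2},\Omega,W)$ (floor $n$) to $\Lip(n,\Omega,W)$ (floor $n-1$), checking via \eqref{lip_k_nested_norm_est}--\eqref{lip_k_nested_C1} with $\Diam(\Omega)\le 2\de$ — and using $\de\le 1$ so that $(2\de)^{\frac{\rho-n}{2}}\le\sqrt2<e$, which deactivates the $\min\{1+e,\cdot\}$ — that the nesting constant is at most $1+(2\de)^{\frac{\rho-n}{2}}$. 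This yields $\|F_{[n-1]}\|_{\Lip(n,\Omega,W)}\le E_n$. I would then iterate: at the stage where $\|F_{[j-1]}\|_{\Lip(j,\Omega,W)}\le E_j$ holds, apply Lemma~\ref{lip_k_lip_norm_ball_bound_lemma} with parameters $(\rho,n,\th,A,r_0)$ replaced by $(j,\,j-1,\,j-\tfrac12,\,E_j,\,r_0)$ — legitimate because $j-\tfrac12\in(j-1,j)$, because $r_0\le E_j$ (indeed $E_j\ge\min\{A,r_0e^{\de}\}\ge r_0$, by an immediate induction on the defining recursion), and because $\|F^{(i)}(z)\|_{\cl(V^{\otimes i};W)}\le r_0$ for $i\in\{0,\dots,j-1\}$ — followed by the Nesting Lemma from $\Lip(j-\tfrac12,\Omega,W)$ to $\Lip(j-1,\Omega,W)$, whose constant is $\le 1+\sqrt{2\de}$ by the same computation. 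Inductively this gives $\|F_{[j-1]}\|_{\Lip(j,\Omega,W)}\le E_j$ for $j=n,n-1,\dots,q+1$. Finally, if $\th<q+1$ I would apply Lemma~\ref{lip_k_lip_norm_ball_bound_lemma} once more with $(\rho,n,\th,A,r_0)$ replaced by $(q+1,\,q,\,\th,\,E_{q+1},\,r_0)$ to obtain exactly \eqref{lip_k_ext_gen_eta_lemma_eta_lip_bound}, while if $\th=q+1$ the bound follows directly from $\|F_{[q]}\|_{\Lip(q+1,\Omega,W)}\le E_{q+1}$ since the right-hand side of \eqref{lip_k_ext_gen_eta_lemma_eta_lip_bound} then equals $E_{q+1}$.

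For the consequence \eqref{lip_k_ball_control_r0=0_conc} when $r_0=0$: since $\de\le 1$ one has $\de^{\rho-n}\le\de^{(\rho-n)/2}\le(2\de)^{(\rho-n)/2}$ and $\de\le\sqrt{2\de}$, so every $\min$ in \eqref{lip_k_ball_control_Es_statement} and in \eqref{lip_k_ext_gen_eta_lemma_eta_lip_bound} collapses to its $\de$-power term and every $\max$ to its $(2\de)$-power term; unwinding the recursion gives $E_{n-s}=(1+(2\de)^{\frac{\rho-n}{2}})(1+\sqrt{2\de})^{s}(2\de)^{\frac{\rho-n}{2}+\frac{s}{2}}A$, and substituting $s=n-(q+1)$ into the final step (whose $\max$ again selects the $(2\de)^{q+1-\th}$ term) and collecting the exponent of $2\de$ as $\tfrac{\rho-\th}{2}+\tfrac{q+1-\th}{2}$ produces \eqref{lip_k_ball_control_r0=0_conc}. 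For the consequence \eqref{lip_k_ball_control_r0>0_conc} when $0<r_0<A$: I would choose $\de_\ast=\de_\ast(A,r_0,\rho)>0$ small enough that, for $\de\in[0,\de_\ast]$, (i) $\de^{\rho-n}A+r_0e^{\de}\le A$ and $(2\de)^{(\rho-n)/2}A\le\de^{\rho-n}A+r_0e^{\de}$ — possible since $r_0<A$ and the left side of the latter tends to $0$ while the right side tends to $r_0$ — so that $E_n=(1+(2\de)^{(\rho-n)/2})(\de^{\rho-n}A+r_0e^{\de})$, and (ii) at each subsequent step both the inner $\min$ and the outer $\max$ in \eqref{lip_k_ball_control_Es_statement} select the term $\de E_{n-(s-1)}+r_0e^{\de}$; (ii) is achievable uniformly because, for $\de$ small, all the $E_{n-s}$ stay in a fixed range comparable to $r_0$ and the number of iteration steps is at most $n-1$, which depends only on $\rho$. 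Under these resolutions $E_{q+1}$ satisfies $E_n=(1+(2\de)^{(\rho-n)/2})(\de^{\rho-n}A+r_0e^{\de})$ and $E_{n-s}=(1+\sqrt{2\de})(\de E_{n-(s-1)}+r_0e^{\de})$; telescoping this linear recursion with $s=n-(q+1)$ gives exactly $E_{q+1}=\e$, with $\e$ as in \eqref{lip_k_ball_norm_cal_e_def} and $\bX_{n-(q+1)}$ as in \eqref{lip_k_ball_norm_control_r0>0_bX_t_statement} collecting the accumulated $r_0e^{\de}$ contributions. Since the right-hand side of \eqref{lip_k_ext_gen_eta_lemma_eta_lip_bound} is nondecreasing in $E_{q+1}$, substituting $E_{q+1}=\e$ into it yields \eqref{lip_k_ball_control_r0>0_conc}.

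The main obstacle I anticipate is the bookkeeping in the $0<r_0<A$ case: one must verify \emph{uniformly} over the (at most $n-1$) iteration steps that the $\min/\max$ branches in the definition of $E_{n-s}$ collapse to the asserted linear-in-$r_0$ form for all $\de\le\de_\ast$, which requires a priori two-sided control $E_{n-s}\asymp r_0$ along the whole iteration, together with careful tracking of the integer floors so that each invocation of Lemma~\ref{lip_k_lip_norm_ball_bound_lemma} (which needs the target exponent strictly between consecutive integers) and of the \textit{Lipschitz Nesting} Lemma~\ref{lip_k_spaces_nested_lemma} is legitimate; the remaining computations — bounding the nesting constants by $1+(2\de)^{(\rho-n)/2}$ and $1+\sqrt{2\de}$ using $\Diam(\Omega)\le 2\de$ and $\de\le 1$, and unwinding the telescoping recursion — are routine.
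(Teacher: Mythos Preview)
Your proposal is correct and follows essentially the same route as the paper's proof: iterating Lemma~\ref{lip_k_lip_norm_ball_bound_lemma} (with the half-integer target exponents $\tfrac{\rho+n}{2}$ and then $j-\tfrac12$) alternated with the Nesting Lemma~\ref{lip_k_spaces_nested_lemma} (with constants bounded by $1+(2\de)^{(\rho-n)/2}$ and $1+\sqrt{2\de}$ via $\Diam(\Omega)\le 2\de$), establishing $\|F_{[j-1]}\|_{\Lip(j,\Omega,W)}\le E_j$ by induction, and finishing with one more application of Lemma~\ref{lip_k_lip_norm_ball_bound_lemma}. Your verification that $r_0\le E_j$ (needed for the hypotheses of Lemma~\ref{lip_k_lip_norm_ball_bound_lemma}) is what the paper isolates as a separate claim, and your treatment of the two consequences --- collapsing the $\min/\max$ branches when $r_0=0$, and choosing $\de_\ast$ to force the linear recursion when $0<r_0<A$ --- matches the paper's, which simply makes the list of smallness conditions on $\de_\ast$ fully explicit rather than leaving it as the bookkeeping exercise you flag.
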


\begin{proof}[Proof of Lemma 
\ref{lip_k_func_ext_lemma_gen_eta}]
Let $V$ and $W$ be Banach spaces, and assume 
that the tensor powers of $V$ are all 
equipped with admissible norms (cf. Definition 
\ref{admissible_tensor_norm}). 
Assume that $\Gamma \subset V$ is a non-empty closed
subset with $z \in \Gamma$. 
Let $A >0$,  $r_0 \in [0,A)$,  $\rho > 1$
with $n \in \Z_{\geq 1}$ such that $\rho \in (n,n+1]$, 
and $\th \in (0,n]$ with $q \in \{0, \ldots , n-1\}$
such that $\th \in (q,q+1]$. Suppose
that $F \in \Lip(\rho, \Gamma,W)$ satisfies
that $ || F||_{\Lip(\rho,\Gamma,W)} \leq A$,
and that for every $j \in \{0, \ldots , n\}$ we have 
the bound $ \left|\left| F^{(j)}(z) 
\right|\right|_{\cl(V^{\otimes j};W)} \leq r_0$.
Fix $\de \in [0,1]$ and define 
$\Omega := \Gamma ~\cap~ \ovB_V (z , \de)$.
For $s \in \{0, \ldots , n - 1 \}$ inductively define 
\beq
	\label{lip_k_ball_control_Es}
		E_{n-s} := \twopartdef
		{\left( 1 + (2\de)^{\frac{\rho-n}{2}} \right)
		 \max \left\{ (2\de)^{\frac{\rho - n}{2}} A ~,~ 
		\min \left\{ A ~,~ A \de^{\rho - n} + 
		r_0 e^{\de} \right\} \right\} }{s=0}
		{\left( 1 + \sqrt{2\de}\right) 
		\max \left\{ \sqrt{2\de} E_{n-(s-1)} ~,~ 
		\min \left\{ E_{n-(s-1)} ~,~ \de E_{n-(s-1)} + 
		r_0 e^{\de} \right\} \right\}}{s \geq 1.}
\eeq
We first prove that each $E_{n-s}$ is bounded
from below by $r_0$. This is the content of the
following claim.

\begin{claim}
\label{lip_k_ball_control_claim_1}
For every $s \in \{0, \ldots , n-1\}$ we have 
\beq
    \label{lip_k_ball_control_claim_1_Es_lb_conc}
        E_{n-s} \geq r_0.
\eeq
\end{claim}

\begin{proof}[Proof of Claim
\ref{lip_k_ball_control_claim_1}]
The claim is proven via induction on 
$s \in \{0, \ldots , n-1\}$. For $s=0$ we have 
\beq
    \label{lip_k_ball_control_claim_1_s=0}
        E_n \stackrel{
        \eqref{lip_k_ball_control_Es}
        }{\geq} 
        \max \left\{ ( 2 \de)^{\frac{\rho - n}{2}}A
        ~,~
		\min \left\{ A ~,~ A \de^{\rho - n} + 
		r_0 e^{\de} \right\} \right\}
		\geq 
		\min \left\{ A ~,~ A \de^{\rho - n} + 
		r_0 e^{\de} \right\}
		\geq
		r_0 
\eeq
where the last inequality uses that $r_0 \leq A$.
If \eqref{lip_k_ball_control_claim_1_Es_lb_conc}
is valid for $s \in \{0 , \ldots , n-2\}$
then we compute that
\begin{align*}
    E_{n-(s+1)} &\stackrel{
    \eqref{lip_k_ball_control_Es}}{=} 
        \left( 1 + \sqrt{2\de}\right) 
		\max \left\{ \sqrt{2\de} E_{n-s} ~,~ 
		\min \left\{ E_{n-s} ~,~ \de E_{n-s} + 
		r_0 e^{\de} \right\} \right\} \\
		&\geq
		\max \left\{ \sqrt{2\de} E_{n-s} ~,~ 
		\min \left\{ E_{n-s} ~,~ \de E_{n-s} + 
		r_0 e^{\de} \right\} \right\} \\
		&\geq 
		\min \left\{ E_{n-s} ~,~ \de E_{n-s} + 
		r_0 e^{\de} \right\} \geq r_0
\end{align*}
where the last line uses that 
$E_{n-s} \geq r_0$ by the assumption that
\eqref{lip_k_ball_control_claim_1_Es_lb_conc}
is valid for $s$, and that
$\de E_{n-s} + r_0 e^{\de} \geq r_0$ since
$\de \geq 0$. Thus we have established that the
estimate \eqref{lip_k_ball_control_claim_1_Es_lb_conc}
for $s \in \{0 , \ldots , n-2\}$ yields that the
estimate \eqref{lip_k_ball_control_claim_1_Es_lb_conc}
is true for $s+1$.
Since \eqref{lip_k_ball_control_claim_1_s=0}
establishes that 
\eqref{lip_k_ball_control_claim_1_Es_lb_conc}
is true for $s=0$, we may use induction to prove
that \eqref{lip_k_ball_control_claim_1_Es_lb_conc}
is in fact true for every
$s \in \{0, \ldots , n-1\}$ as claimed.
This completes the proof of Claim
\ref{lip_k_ball_control_claim_1}.
\end{proof}
\vskip 4pt
\noindent
We now prove that, for each 
$s \in \{0, \ldots , n-1\}$, the 
$\Lip(n-s,\Omega,W)$-norm of $F_{[n-s-1]} =
\left( F^{(0)} , \ldots , F^{(n-s-1)} \right)$ is bounded
above by $E_{n-s}$. This is the content of the 
following claim.

\begin{claim}
\label{lip_k_ball_control_claim_2}
For every $s \in \{0, \ldots , n-1\}$ we have that 
\beq
	\label{lip_k_ball_control_n-s_Es}
		\left|\left| F_{[n-(s+1)]} 
		\right|\right|_{\Lip(n-s,\Omega,W)} \leq E_{n-s}.
\eeq
\end{claim}

\begin{proof}[Proof of Claim
\ref{lip_k_ball_control_claim_2}]
We will prove \eqref{lip_k_ball_control_n-s_Es}
via induction on $s \in \{0, \ldots , n-1\}$.
We begin with the base case that $s=0$. In this case,
consider $\xi := \frac{\rho -n}{2} \in (0, \rho - n)$ 
so that $n + \xi \in (n , \rho)$ with $0 < \xi \leq 1/2$.
An initial application of Lemma 
\ref{lip_k_lip_norm_ball_bound_lemma},
with $\Gamma$, $A$, $r_0$, $\rho$ and $n$ 
here playing the same role
and with the $\th$ in Lemma 
\ref{lip_k_lip_norm_ball_bound_lemma}
being $n + \xi$ here, yields that
\beq
    \label{lip_k_ball_control_n+xi_est}
		|| F ||_{\Lip(n + \xi , \Omega,W)}
		\leq
		\max \left\{ (2\de)^{\rho - n - \xi} A , 
		\min \left\{ A ~,~ A \de^{\rho - n} + 
		r_0 e^{\de} \right\} \right\}.
\eeq
We next apply Lemma \ref{lip_k_spaces_nested_lemma}, 
with the $\Gamma$, $\rho$ and $\th$ of that result 
as $\Omega$, $n+\xi$ and $n$ here 
respectively, to obtain that
\beq
    \label{lip_k_ball_control_n_est_a}
		\left|\left| F_{[n-1]} 
		\right|\right|_{\Lip(n,\Omega,W)}
		\leq
		\min \{ C_1 , C_2 \}
		|| F ||_{\Lip(n+\xi,\Omega,W)}
\eeq
where (cf. \eqref{lip_k_nested_C1} and recalling 
both that $\Diam(\Omega) \leq 2\delta \leq 2$
and that $0 < \xi \leq 1$)
\beq
    \label{lip_k_ball_control_n_C1}
		C_1 = \max \left\{ 1 ~,~
		\min \left\{ 1 + e ~,~ 
		\Diam(\Omega)^{\xi} + \sum_{j=n}^n 
		\frac{\Diam(\Omega)^{j-n}}{(j-(n-1))!}
		\right\} \right\}
		=
		1 + \Diam(\Omega)^{\xi}
		\leq 
		1 + (2\de)^{\xi},
\eeq
and (cf. \eqref{lip_k_nested_C2})
\beq
    \label{lip_k_ball_control_n_C2_A}
    C_2 = \max \left\{ 1 ,
		\min \left\{ 1 + e ,\Diam(\Omega)^{n-n} 
		\right\} \right\}
		\left( 1 + \min \left\{ e ,\Diam(\Omega)
		\right\}^{\xi} \right)
		\left(1 + 
		\min \left\{ e , \Diam(\Omega)
		\right\}\right)^{n-n} 
\eeq
so that, since $\Diam(\Omega) \leq 2\delta \leq 2$
and $0 < \xi \leq 1$, we have
\beq
    \label{lip_k_ball_control_n_C2}
		C_2 = 1 + \Diam(\Omega)^{\xi}
		\leq 
		1 + (2\de)^{\xi}
\eeq
The combination of 
\eqref{lip_k_ball_control_n+xi_est},
\eqref{lip_k_ball_control_n_est_a},
\eqref{lip_k_ball_control_n_C1}, and 
\eqref{lip_k_ball_control_n_C2} yields that
\begin{align*}
	\left|\left| F_{[n-1]} 
	\right|\right|_{\Lip(n,\Omega,W)}
		&\leq
		\left( 1 + (2\de)^{\xi} \right)
		\max \left\{ (2\de)^{\rho - n - \xi} A , 
		\min \left\{ A ~,~ A \de^{\rho - n} + 
		r_0 e^{\de} \right\} \right\} \\
		&=
		\left( 1 + (2\de)^{\frac{\rho - n}{2}} \right)
		\max \left\{ (2\de)^{\frac{\rho - n}{2}} A , 
		\min \left\{ A ~,~ A \de^{\rho - n} + 
		r_0 e^{\de} \right\} \right\} 
		\stackrel{
		\eqref{lip_k_ball_control_Es}}
		{=}
		E_n.
\end{align*}
This completes the base case of our induction by verifying 
\eqref{lip_k_ball_control_n-s_Es} when $s=0$.

Now assume that $n-1 \geq 1$, that 
$s \in \{1 , \ldots , n-1\}$, 
and that \eqref{lip_k_ball_control_n-s_Es} 
is true for $s-1$.
Consider $\xi := \frac{1}{2} \in (0, 1)$ 
so that $n-s+\xi \in (n-s , n-(s-1))$.
An initial application of Lemma 
\ref{lip_k_lip_norm_ball_bound_lemma},
with $\Gamma$, $A$, $r_0$, $\rho$ and $\th$ of 
that result as
$\Omega$, $E_{n-(s-1)}$, $r_0$, $n-(s-1)$ and 
$n-s + \xi$ here 
respectively, yields that
\beq
	\label{lip_k_ball_control_n-s+xi_est}
		\left|\left| F_{[n-s]} 
		\right|\right|_{\Lip(n - s + \xi ,\Omega,W)}
		\leq
		\max \left\{ (2\de)^{1-\xi} E_{n-(s-1)}, 
		\min \left\{ E_{n-(s-1)} ~,~ E_{n-(s-1)} \de + 
		r_0 e^{\de} \right\} \right\}.
\eeq
We next apply Lemma \ref{lip_k_spaces_nested_lemma}, 
with the $\Gamma$, $\rho$ and $\th$ of that result 
as $\Omega$, $n-s+\xi$ and $n-s$ here 
respectively, to obtain that
\beq
	\label{lip_k_ball_control_n-s_est_a}
		\left|\left| F_{[n-(s+1)]} 
		\right|\right|_{\Lip(n-s,\Omega,W)}
		\leq
		\min \{ D_1 , D_2 \}
		\left|\left| F_{[n-s]} 
		\right|\right|_{\Lip(n - s + \xi ,\Omega,W)}
\eeq
where (cf. \eqref{lip_k_nested_C1} and 
recalling that $\Diam(\Omega) \leq 2\de \leq 2$
and $\xi := \frac{1}{2} \leq 1$)
\beq
	\label{lip_k_ball_control_n_D1}
		D_1 = \max \left\{ 1 ~,~
		\min \left\{ 1 + e ,
		\Diam(\Omega)^{\xi} + \sum_{j=n-s}^{n-s} 
		\frac{\Diam(\Omega)^{j-(n-s)}}{(j-(n-s-1))!}
		\right\} \right\}
		=
		1 + \Diam(\Omega)^{\xi}
		\leq 
		1 + (2\de)^{\xi},
\eeq
and (cf. \eqref{lip_k_nested_C2})
\beq
	\label{lip_k_ball_control_n_D2_A}
		D_2 = \max \left\{ 1 ~,~ 
		\min \left\{ 1+e ,\Diam(\Omega)^{0} 
		\right\} \right\}
		\left( 1 + \min \left\{ e ,
		\Diam(\Omega) \right\}^{\xi} \right)
		\left(1 + \min \left\{ e , 
		\Diam(\Omega) \right\} \right)^{0}
\eeq
so that, since $\Diam(\Omega) \leq 2 \de \leq 2$
and $\xi := \frac{1}{2} \leq 1$, we have 
\beq
    \label{lip_k_ball_control_n_D2}
        D_2 =
		1 + \Diam(\Omega)^{\xi}
		\leq 
		1 + (2\de)^{\xi}.
\eeq
The combination of 
\eqref{lip_k_ball_control_n-s+xi_est},
\eqref{lip_k_ball_control_n-s_est_a},
\eqref{lip_k_ball_control_n_D1}, and 
\eqref{lip_k_ball_control_n_D2} yields that
\begin{align*}
    \left|\left| F_{[n-(s+1)]} 
	\right|\right|_{\Lip(n-s,\Omega,W)}
		&\leq
		\left( 1 + (2\de)^{\xi} \right)
		\max \left\{ (2\de)^{1 - \xi} E_{n-(s-1)} , 
		\min \left\{ E_{n-(s-1)} ~,~ \de
		E_{n-(s-1)} + 
		r_0 e^{\de} \right\} \right\} \\
		&=
		\left( 1 + \sqrt{2\de} \right)
		\max \left\{ \sqrt{2\de} E_{n-(s-1)} , 
		\min \left\{ E_{n-(s-1)} ~,~ \de
		E_{n-(s-1)} + 
		r_0 e^{\de} \right\} \right\} \\
		&\stackrel{
		\eqref{lip_k_ball_control_Es}}
		{=}
		E_{n-s}.
\end{align*}
This completes the proof of the inductive step by 
establishing that if \eqref{lip_k_ball_control_n-s_Es} 
is valid for
$s-1$ with $s \in \{1 ,\ldots , n-1\}$, then 
\eqref{lip_k_ball_control_n-s_Es} is in fact valid for $s$.

Using the base case and the inductive step allows us to 
conclude that \eqref{lip_k_ball_control_n-s_Es} is 
valid for every $s \in \{0, \ldots , n-1\}$.
This completes the proof of Claim 
\ref{lip_k_ball_control_claim_2}.
\end{proof}
\vskip 4pt
\noindent
By appealing to Claim \ref{lip_k_ball_control_claim_2},
we conclude that, for every $s \in \{0, \ldots , n-1\}$ 
we have that
\beq
	\label{lip_k_ball_control_n-s_Es_restate}
		\left|\left| F_{[n-(s+1)]}
		\right|\right|_{\Lip(n-s,\Omega,W)} 
		\leq E_{n-s}.
\eeq
Let $b_q := n - (q+1) \in \{0 , \ldots , n-1\}$ so that 
$q+1 = n - b_q$.
Then \eqref{lip_k_ball_control_n-s_Es_restate} 
for $s := b_q$ tells us that
\beq
	\label{lip_k_ball_control_q+1_Em_restate}
		\left|\left| F_{[q]} 
		\right|\right|_{\Lip(q+1,\Omega,W)} 
		\leq E_{n-b_q}.
\eeq
A final application of Lemma 
\ref{lip_k_lip_norm_ball_bound_lemma},
with $\Gamma$, $A$, $r_0$, $\rho$ and $\th$ 
of that result as
$\Omega$, $E_{n-b_q}$, $r_0$, $q+1$ and $\th$ here,
yields that
\beq
	\label{lip_k_ball_control_theta_est}
		\left|\left| F_{[q]} 
		\right|\right|_{\Lip(\th,\Omega,W)} 
		\leq
		\max \left\{ (2\de)^{q+1-\th} E_{n-b_q}, 
		\min \left\{ E_{n-b_q} ~,~ \de E_{n-b_q} + 
		r_0 e^{\de} \right\} \right\}
\eeq
which is precisely the bound claimed in 
\eqref{lip_k_ext_gen_eta_lemma_eta_lip_bound}.

Now suppose that $r_0 = 0$. Then from 
\eqref{lip_k_ball_control_Es}, 
for $s \in \{0, \ldots , n - 1 \}$ we have that 
\beq
	\label{lip_k_ball_control_Es_r0=0}
		E_{n-s} := \twopartdef
		{\left( 1 + (2\de)^{\frac{\rho-n}{2}} \right) 
		\max \left\{ (2\de)^{\frac{\rho - n}{2}} A ~,~ 
		\min \left\{ A ~,~ \de^{\rho - n} A
		\right\} \right\} }{s=0}
		{\left(1+\sqrt{2\de}\right) 
		\max \left\{ \sqrt{2\de} E_{n-(s-1)} ~,~ 
		\min \left\{ E_{n-(s-1)} ~,~ \de E_{n-(s-1)} 
		\right\} \right\}}{s \geq 1.}
\eeq
Since $\de \in [0,1]$ we have both that 
$\de \leq \sqrt{\de} \leq 1$ and 
$\de^{\rho - n} \leq \de^{\frac{\rho - n}{2}} \leq 1$
Consequently, 
\eqref{lip_k_ball_control_Es_r0=0} yields that
\beq
	\label{lip_k_ball_control_Es_r0=0_v2}
		E_{n-s} := \twopartdef
		{\left( 1 + (2\de)^{\frac{\rho-n}{2}} \right) 
		(2\de)^{\frac{\rho - n}{2}} A }{s=0}
		{\left(1+\sqrt{2\de}\right) 
		\sqrt{2\de} E_{n-(s-1)} }{s \geq 1.}
\eeq
Proceeding inductively via 
\eqref{lip_k_ball_control_Es_r0=0_v2}, 
we establish that for any 
$s \in \{0 , \ldots , n-1\}$ we have that
\beq
	\label{lip_k_ball_control_En-s_r0=0}
		E_{n-s} 
		=
		\left( 1 + (2\de)^{\frac{\rho-n}{2}} \right) 
		\left( 1+\sqrt{2\de} \right)^s
		(2\de)^{\frac{\rho - n +s}{2}} A.
\eeq
Observing that $\de \in [0,1]$ means that
$\de \leq \de^{q+1-\th} \leq 1$, we may combine
\eqref{lip_k_ball_control_theta_est} and
\eqref{lip_k_ball_control_En-s_r0=0} for the choice 
$s := b_q = n - (q+1)$ to obtain that
\beq
    \label{lip_k_ball_control_lip_est_r0=0_A}
	    \left|\left| F_{[q]} 
		\right|\right|_{\Lip(\th,\Omega,W)}
		\leq
		(2\de)^{q+1-\th} E_{n-b_q}
		\stackrel{
		\eqref{lip_k_ball_control_En-s_r0=0}
		}{=} 
		\left( 1 + (2\de)^{\frac{\rho-n}{2}} \right) 
		\left( 1+\sqrt{2\de} \right)^{b_q}
		(2\de)^{\frac{\rho - n +b_q}{2} + q + 1 - \th} A. 
\eeq
Since $\rho - n + b_q = \rho - (q+1)$ we see that
\eqref{lip_k_ball_control_lip_est_r0=0_A} is precisely
the estimate claimed in 
\eqref{lip_k_ball_control_r0=0_conc}.

Now assume that $r_0 \in (0, A)$. 
We first let $\de_{\ast} := 1$
To establish \eqref{lip_k_ball_control_r0>0_conc} 
we must reduce $\de_{\ast}$ to a smaller constant.
With the benefit of hindsight, it will suffice to
reduce $\de_{\ast}$, depending only on
$A$, $r_0$, and $\rho$, to ensure that
whenever $\de \in [0, \de_{\ast}]$ we have the 
estimates
\beq
	\label{lip_k_ball_control_de<de_star_conds}
		\left\{
		\begin{aligned}
		&	(\bI) \quad 
			\max \left\{ 1 + \sqrt{2\de} ~,~
			1 + \left( 2 \de \right)^{\frac{\rho - n}{2}}
			\right\}
			< 2 \quad 
			\left( \text{in particular } 
			2\de < 1 \right), \\
		&	(\bII) \quad r_0 e^{\de} \leq A 
			\left( 1 - \de^{\rho -n} \right),   \\
		&	(\bIII) \quad \left( 2^{\frac{\rho - n}{2}} - 
			\de^{\frac{\rho - n}{2}} \right) 
			\de^{\frac{\rho - n}{2}} A 
			\leq r_0 e^{\de},\\
		&	(\bIV) \quad 2\sqrt{2\de}
			\left( \de^{\rho - n} A + r_0 e^{\de} \right) 
			\leq
			r_0 e^{\de}, \quad \text{and} \\
		&	(\bV) \quad \sqrt{2\de} \left[ 2^n 
			\left( \de^{\rho - n} A
			+ r_0 \de e^{\de}
			\right) + 
			2r_0 e^{\de} \left(
			\frac{1 - (2\de)^n}{1-2\de}\right)
			\right] \leq 
			r_0 e^{\de}.
		\end{aligned}
		\right.
\eeq
We now consider a fixed choice of  
$\de \in [0,\de_{\ast}]$ and establish the estimate
claimed in \eqref{lip_k_ball_control_r0>0_conc} 
for $\Omega := \ovB_V(p,\de) ~\cap~ \Gamma$.
We begin by estimating the terms $E_{n-s}$ for
$s \in \{0, \ldots , n-1\}$. We first prove, for 
every $t \in \{0 , \ldots , n-1\}$, that
\beq
	\label{lip_k_ball_control_r0>0_E_n-t}
		E_{n-t} =
		\left( 1 + (2\de)^{\frac{\rho-n}{2}} \right)
		\left( 1 + \sqrt{2\de} \right)^t 
		\left( \de^{\rho - n + t} A
		+ r_0 \de^t e^{\de}
		\right) +
		\bX_t(\de)
\eeq
where $\bX_t(\de)$ is the quantity defined in
\eqref{lip_k_ball_norm_control_r0>0_bX_t_statement}.
That is, 
\beq
	\label{lip_k_ball_norm_control_r0>0_bX_t}
		\bX_t(\de) :=
		\twopartdef
		{0}{t=0}
		{\left( 1 + \sqrt{2\de}\right)
		r_0e^{\de}
		\sum_{j=0}^{t-1} \de^{j} 
		\left(1 + \sqrt{2\de}\right)^{j}}{t \geq 1.}
\eeq
We begin by considering $t:=0$.
From \eqref{lip_k_ball_control_Es} we have that
\beq
	\label{lip_k_ball_control_En_r0>0_a}
		E_n = \left( 1 + (2\de)^{\frac{\rho-n}{2}} \right) 
		\max \left\{ (2\de)^{\frac{\rho - n}{2}} A ~,~
		\min \left\{ A ~,~
		\de^{\rho - n} A + r_0 e^{\de}
		\right\} \right\}.
\eeq
A consequence of (\bII) in 
\eqref{lip_k_ball_control_de<de_star_conds}
is that $\de^{\rho - n} A + r_0 e^{\de} \leq A$ so 
that from \eqref{lip_k_ball_control_En_r0>0_a} we see that 
\beq
	\label{lip_k_ball_control_En_r0>0_b}
		E_n = \left( 1 + (2\de)^{\frac{\rho-n}{2}} \right) 
		\max \left\{ (2\de)^{\frac{\rho - n}{2}} A ~,~
		\de^{\rho - n} A + r_0 e^{\de} \right\}.
\eeq
A consequence of (\bIII) in 
\eqref{lip_k_ball_control_de<de_star_conds}
is that 
$( 2\de)^{\frac{\rho - n}{2}} A \leq \de^{\rho - n} A
+ r_0 e^{\de} $ so 
that from \eqref{lip_k_ball_control_En_r0>0_b} we see that 
\beq
	\label{lip_k_ball_control_En_r0>0_c}
		E_n = 
		\left( 1 + (2\de)^{\frac{\rho-n}{2}} \right)
		\left( \de^{\rho - n} A + r_0 e^{\de} \right)
\eeq
which is the estimate claimed in 
\eqref{lip_k_ball_control_r0>0_E_n-t} for $t=0$ since 
$\bX_0(\de) := 0$.

Now consider $t \geq 1$ and assume that 
\eqref{lip_k_ball_control_r0>0_E_n-t} is true 
for $t-1$.
From \eqref{lip_k_ball_control_Es} we have that
\beq
	\label{lip_k_ball_control_En-t_r0>0_a}
		E_{n-t} = \left( 1 + \sqrt{2\de}\right) 
		\max \left\{ 
		\sqrt{2\de} E_{n-(t-1)} ~,~
		\min \left\{ E_{n-(t-1)} ~,~
		\de E_{n-(t-1)} + r_0 e^{\de}
		\right\} \right\}.
\eeq
Since \eqref{lip_k_ball_control_r0>0_E_n-t}
is valid for $t-1$ we have that
\beq
	\label{lip_k_ball_control_r0>0_En-(t-1)}
		E_{n-(t-1)}
		=
		\left( 1 + (2\de)^{\frac{\rho-n}{2}} \right)
		\left( 1 + \sqrt{2\de} \right)^{t-1} 
		\left( \de^{\rho - n + t-1} A
		+ r_0 \de^{t-1} e^{\de}
		\right) +
		\bX_{t-1}(\de)
\eeq
We claim that $\de E_{n-(t-1)} + r_0 e^{\de} \leq 
E_{n-(t-1)}$. 

If $t = 1$, then 
\eqref{lip_k_ball_control_En_r0>0_c} ensures that 
$(1-\de)E_n \geq (1-\de)
\left( 1 + (2\de)^{\frac{\rho-n}{2}} \right)
r_0 e^{\de}$. If we are able to conclude that
$(1-\de)
\left( 1 + (2\de)^{\frac{\rho-n}{2}} \right) \geq 1$
then our desired estimate 
$\de E_n + r_0 e^{\de} \leq E_n$ is true.
The required lower bound  
$(1-\de)\left(1 + (2\de)^{\frac{\rho-n}{2}} \right)
\geq 1$ is equivalent to 
$(2\de)^{\frac{\rho-n}{2}} - \de -
\de(2\de)^{\frac{\rho-n}{2}} \geq 0$.
A consequence of (\bI) in 
\eqref{lip_k_ball_control_de<de_star_conds}
is that $(2\de)^{\frac{\rho-n}{2}} < 1$.
This tells us that
$\de + \de(2\de)^{\frac{\rho-n}{2}} \leq 2\de 
\leq (2\de)^{\frac{\rho-n}{2}}$
where the latter inequality is true since $2\de < 1$
and $\frac{\rho - n}{2} < 1$. Hence $(1-\de)
\left( 1 + (2\de)^{\frac{\rho-n}{2}} \right) \geq 1$
and so we have that 
$\de E_n + r_0 e^{\de} \leq E_n$ as required.

If $t > 1$ then 
\eqref{lip_k_ball_control_r0>0_En-(t-1)} ensures that
$(1-\de)E_{n-(t-1)} \geq (1-\de)\bX_{t-1}(\de)
\geq (1-\de) \left( 1 + \sqrt{2\de}\right) r_0 e^{\de}$.
If we are able to conclude that 
$(1-\de) \left( 1 + \sqrt{2\de}\right) \geq 1$ then 
our desired estimate 
$\de E_{n-(t-1)} + r_0 e^{\de} \leq E_{n-(t-1)}$ 
is true. The required lower bound
$(1-\de) \left( 1 + \sqrt{2\de}\right) \geq 1$ 
is equivalent to 
$\sqrt{2\de} - \de - \de\sqrt{2\de} \geq 0$.
A consequence of (\bI) in 
\eqref{lip_k_ball_control_de<de_star_conds}
is that $\sqrt{2\de} < 1$.
This tells us that
$\de + \de \sqrt{2\de} \leq 2\de \leq \sqrt{2\de}$
where the latter inequality is true since $2\de < 1$.
Hence $(1-\de) \left( 1 + \sqrt{2\de}\right) \geq 1$ 
and so we have that 
$\de E_{n-(t-1)} + r_0 e^{\de} \leq E_{n-(t-1)}$ 
as required.

Having established that 
$\de E_{n-(t-1)} + r_0 e^{\de} \leq E_{n-(t-1)}$,
\eqref{lip_k_ball_control_En-t_r0>0_a} tells us that  
\beq
	\label{lip_k_ball_control_En-t_r0>0_b}
		E_{n-t} = \left( 1 + \sqrt{2\de}\right) 
		\max \left\{ 
		\sqrt{2\de} E_{n-(t-1)} ~,~
		\de E_{n-(t-1)} + r_0 e^{\de}
		\right\}.
\eeq
We claim that 
$\sqrt{2\de} E_{n-(t-1)} \leq 
\de E_{n-(t-1)} + r_0 e^{\de}$.
If $t = 1$ then we compute, using
\eqref{lip_k_ball_control_r0>0_En-(t-1)} for $t=1$, 
that
\begin{align*}
	\left(\sqrt{2\de} - \de \right) E_{n} &= 
		\sqrt{\de}\left(\sqrt{2} - \sqrt{\de} \right)
		\left( 1 + (2\de)^{\frac{\rho-n}{2}} \right)
		\left( \de^{\rho - n} A + r_0 e^{\de} \right) \\
		&\stackrel{(\bI) ~\text{in}~
		\eqref{lip_k_ball_control_de<de_star_conds}
		}{\leq}
		2 \sqrt{2\de}
		\left( \de^{\rho - n} A + r_0 e^{\de} \right) 
		\stackrel{(\bIV) ~\text{in}~
		\eqref{lip_k_ball_control_de<de_star_conds}
		}
		{\leq}
		r_0 e^{\de}.
\end{align*}
Consequently we have 
$\sqrt{2\de}E_n \leq \de E_n + r_0 e^{\de}$
as claimed.
If $t > 1$ then we compute, using
\eqref{lip_k_ball_control_r0>0_En-(t-1)} for $t>1$, 
that
\begin{align*}
	\left(\sqrt{2\de} - \de \right) E_{n-(t-1)} &=
		\left(\sqrt{2\de} - \de \right) \left(
		\left( 1 + (2\de)^{\frac{\rho-n}{2}} \right)
		\left( 1 + \sqrt{2\de} \right)^{t-1} 
		\left( \de^{\rho - n + t-1} A
		+ r_0 \de^{t-1} e^{\de}
		\right) +
		\bX_{t-1}(\de) \right) \\
		&\stackrel{
		(\bI) ~\text{in}~
		\eqref{lip_k_ball_control_de<de_star_conds} 
		}{\leq} 
		2^t \sqrt{2\de}\left(
		\de^{\rho - n + t-1} A
		+ r_0 \de^{t-1} e^{\de} \right)
		+ 
		\sqrt{2\de} \bX_{t-1}(\de)
		\\
		&= 2^t \sqrt{2\de}\left(
		\de^{\rho - n + t-1} A
		+ r_0 \de^{t-1} e^{\de} \right)
		+ \sqrt{2\de}
		\left( 1 + \sqrt{2\de}\right)
		r_0e^{\de}
		\sum_{j=0}^{t-1} \de^{j} 
		\left(1 + \sqrt{2\de}\right)^{j} \\
		&\stackrel{
		(\bI) ~\text{in}~
		\eqref{lip_k_ball_control_de<de_star_conds}
		}{\leq}
		2^{t}\sqrt{2\de} 
		\left( \de^{\rho - n} A
		+ r_0 \de e^{\de}
		\right) + 2\sqrt{2\de} r_0 e^{\de} 
		\sum_{j=0}^{t-1} (2\de)^j \\
		&\stackrel{ (\bI) ~\text{in}~
		\eqref{lip_k_ball_control_de<de_star_conds}
		}{=}
		\sqrt{2\de} \left[ 2^{t} 
		\left( \de^{\rho - n} A
		+ r_0 \de e^{\de}
		\right) + 
		2r_0 e^{\de} \left(\frac{1 - 
		(2\de)^t}{1-2\de}\right)
		\right] \\
		&\leq 
		\sqrt{2\de} \left[ 2^n 
		\left( \de^{\rho - n} A
		+ r_0 \de e^{\de}
		\right) + 
		2r_0 e^{\de} \left(\frac{1 - 
		(2\de)^n}{1-2\de}\right)
		\right] 
		\stackrel{ (\bIV) ~\text{in}~
		\eqref{lip_k_ball_control_de<de_star_conds}
		}{\leq}
		r_0e^{\de}.
\end{align*}
Consequently we have that 
$\sqrt{2\de} E_{n-(t-1)} \leq \de E_{n-(t-1)} +
r_0 e^{\de}$ as claimed.

Returning our attention to 
\eqref{lip_k_ball_control_En-t_r0>0_b}, 
the inequality 
$\sqrt{2\de} E_{n-(t-1)} \leq \de E_{n-(t-1)} +
r_0 e^{\de}$ means that 
\begin{align*}
	E_{n-t} &= 
		\left( 1 + \sqrt{2\de}\right)
		\left( \de E_{n-(t-1)} + r_0 e^{\de}
		\right) \\
		&\stackrel{
		\eqref{lip_k_ball_control_r0>0_En-(t-1)}}
		{=}
		\left( 1 + \sqrt{2\de}\right)
		\left(  
		\left( 1 + (2\de)^{\frac{\rho-n}{2}} \right)
		\left( 1 + \sqrt{2\de} \right)^{t-1} 
		\left( \de^{\rho - n + t} A
		+ r_0 \de^{t} e^{\de}
		\right) +
		\de \bX_{t-1}(\de)
		+ r_0 e^{\de} \right) \\
		&=
		\left( 1 + (2\de)^{\frac{\rho-n}{2}} \right)
		\left( 1 + \sqrt{2\de} \right)^{t} 
		\left( \de^{\rho - n + t} A
		+ r_0 \de^{t} e^{\de} \right)
		+ \left( 1 + \sqrt{2\de}\right) \left( r_0e^{\de}
		+ \de  \bX_{t-1}(\de) \right).
\end{align*}
We observe that 
\beq
	\label{lip_k_ball_norm_control_r0>0_En_step1}
		\de \left( 1 + \sqrt{2\de}\right) \bX_{t-1}(\de) 
		\stackrel{
		\eqref{lip_k_ball_norm_control_r0>0_bX_t}
		}{=} 
		\twopartdef
		{0}{t-1=0}
		{\left( 1 + \sqrt{2\de}\right) r_0e^{\de}
		\sum_{j=1}^{t} \de^{j} 
		\left(1 + \sqrt{2\de}\right)^{j}}{t -1 \geq 1.}
\eeq
Via \eqref{lip_k_ball_norm_control_r0>0_En_step1}
we see that
\begin{align*}
	\left( 1 + \sqrt{2\de}\right) \left( r_0e^{\de}
	+ \de  \bX_{t-1}(\de) \right)
		&= 
		\twopartdef
		{\left( 1 + \sqrt{2\de}\right) r_0 e^{\de}}{t-1=0}
		{\left( 1 + \sqrt{2\de}\right) r_0e^{\de}
		\sum_{j=0}^{t} \de^{j} 
		\left(1 + \sqrt{2\de}\right)^{j}}{t -1 \geq 1} \\
		&\stackrel{\eqref{lip_k_ball_norm_control_r0>0_bX_t}
		}{=} 
		\twopartdef
		{\bX_1(\de)}{t=1}
		{\bX_t(\de)}{t \geq 2} \\
		&= \bX_t(\de).
\end{align*}
Therefore we have established that 
\beq
	\label{lip_k_ball_control_r0>0_En_form_gotte}
		E_{n-t} 
		=
		\left( 1 + (2\de)^{\frac{\rho-n}{2}} \right)
		\left( 1 + \sqrt{2\de} \right)^{t} 
		\left( \de^{\rho - n + t} A
		+ r_0 \de^{t} e^{\de} \right)
		+
		\bX_t(\de)
\eeq
which is the estimate claimed in 
\eqref{lip_k_ball_control_r0>0_E_n-t} for $t$.
Induction now allows us to conclude that the estimate
\eqref{lip_k_ball_control_r0>0_E_n-t} is valid for 
every $t \in \{0, \ldots , n-1\}$ as claimed.

To conclude, recall that $\th \in (q,q+1]$ and
$b_q := n -(q+1) \in \{0, \ldots , n-1\}$.
Then \eqref{lip_k_ball_control_theta_est} 
yields that
\beq
	\label{lip_k_ball_control_r0>0_lip_th_est_a}
		\left|\left| F_{[q]}
		\right|\right|_{\Lip(\th,\Omega,W)}
		\leq
		\max \left\{ (2\de)^{q+1-\th} \e ~,~
		\min \left\{ \e ~,~
		\de \e + r_0 e^{\de}
		\right\}
		\right\}
\eeq
where, via \eqref{lip_k_ball_control_r0>0_E_n-t} 
for $t := b_q$,
$\e = \e(A,r_0,\rho,\th,\de) := E_{n- b_q}$, i.e.
\beq
	\label{lip_k_ball_control_r0>0_e_def_pf}
		\e 
		\stackrel{
		\eqref{lip_k_ball_control_r0>0_E_n-t}}
		{=}
		\left( 1 + (2\de)^{\frac{\rho-n}{2}} \right)
		\left( 1 + \sqrt{2\de} \right)^{n - (q+1)} 
		\left( \de^{\rho - (q+1)} A
		+ r_0 \de^{n-(q+1)} e^{\de} \right)
		+
		\bX_{n-(q+1)}(\de)
\eeq
as claimed in \eqref{lip_k_ball_control_r0>0_conc} 
and \eqref{lip_k_ball_norm_cal_e_def}.
This completes the proof of Lemma 
\ref{lip_k_func_ext_lemma_gen_eta}.
\end{proof}

\section{Proof of the Pointwise Lipschitz Sandwich Theorem}
\label{lip_k_sand_thms_proofs}
In this section we use the local pointwise Lipschitz estimates 
established in Lemma \ref{lip_k_growth_rates_lemma} from 
Section \ref{local_lip_bds_sec} to establish 
the \textit{Pointwise Lipschitz Sandwich} Theorem 
\ref{lip_k_cover_C0_thm}

\begin{proof}[Proof of Theorem \ref{lip_k_cover_C0_thm}]
Assume that V and W are Banach spaces and that the 
tensor powers of V are all equipped with admissible 
norms (cf. Definition \ref{admissible_tensor_norm}).
Assume $\Sigma \subset V$ is a closed subset.
Let $\ep, \gamma > 0$ with $k \in \Z_{\geq 0}$
such that $\gamma \in (k,k+1]$, 
$(K_1,K_2) \in \left( \R_{\geq 0} \times \R_{\geq 0} \right) 
\setminus \{(0,0)\}$, and 
$0 \leq \ep_0 < \min \left\{ K_1 + K_2 , \ep\right\}$.
Let $l \in \{0, \ldots , k\}$ and define 
$\de_0 = \de_0(\ep, \ep_0, K_1+K_2, \gamma, l) > 0$ by
\beq
    \label{lip_k_ext_C0_thm_de0_def}
        \de_0 := \sup \left\{ \th > 0 ~:~ 
        (K_1 + K_2) \th^{\gamma - l} + \ep_0 e^{\th} 
        \leq 
        \min \left\{ K_1 + K_2 , \ep \right\}
        \right\}.
\eeq
A first consequence of \eqref{lip_k_ext_C0_thm_de0_def}
is that $\de_0 \leq 1$, and so for every 
$s \in \{0, \dots , l\}$ we have that
$\de_0^{\gamma-s} \leq \de_0^{\gamma - l}$.
A second consequence of \eqref{lip_k_ext_C0_thm_de0_def}
is that 
\beq
    \label{lip_k_C0_thm_de0_prop}
        (K_1 + K_2) \de_0^{\gamma - l} + \ep_0 e^{\de_0}
        \leq \min \left\{ K_1 + K_2 , \ep \right\}
        \leq \ep.
\eeq
Now assume that $B \subset \Sigma$ is a $\de_0$-cover
of $\Sigma$ in the sense that 
\beq
    \label{lip_k_cover_cover_prop_C0_pf}
        \Sigma \subset \bigcup_{x \in B}
        \ovB_{V} ( x , \de_0)
        = B_{\de_0} :=
        \left\{ v \in V ~:~ \exists z \in B
        \text{ such that } ||z-v||_V \leq \de_0
        \right\}.
\eeq
Suppose $\psi = \left( \psi^{(0)} , \ldots , 
\psi^{(k)} \right) , \vph = \left( \vph^{(0)} , 
\ldots , \vph^{(k)} \right) \in \Lip(\gamma,\Sigma,W)$ satisfy 
the $\Lip(\gamma,\Sigma,W)$ norm estimates 
$||\psi||_{\Lip(\gamma,\Sigma,W)} \leq K_1$ and  
$||\vph||_{\Lip(\gamma,\Sigma,W)} \leq K_2$.
Further suppose that for every $j \in \{0, \ldots , k\}$ 
and every $x \in B$ the difference 
$\psi^{(j)}(x) - \vph^{(j)}(x) \in \cl(V^{\otimes j};W)$
satisfies the bound
\beq
    \label{lip_k_C0_cover_thm_B_close_assump_pf}
        \left|\left| \psi^{(j)}(x) - \vph^{(j)}(x)
        \right|\right|_{\cl(V^{\otimes j};W)}
        \leq \ep_0.
\eeq
Define $F \in \Lip(\gamma,\Sigma,W)$ by 
$F := \psi - \vph$ so that for every 
$j \in \{0, \ldots , k\}$ we have 
$F^{(j)} := \psi^{(j)} - \vph^{(j)}$.
Then $||F||_{\Lip(\gamma,\Sigma,W)} \leq K_1 + K_2$ and, 
for every integer $j \in \{0, \ldots , k\}$ and every point
$x \in B$, \eqref{lip_k_C0_cover_thm_B_close_assump_pf}
gives that $\left|\left| F^{(j)}(x) 
\right|\right|_{\cl(V^{\otimes j};W)} \leq \ep_0$.

Now fix $x \in \Sigma$ and $s \in \{0, \ldots , l\}$.
From \eqref{lip_k_cover_cover_prop_C0_pf} 
we conclude that there exists a point $z \in B$
with $||z - x||_V \leq \de_0$. Then apply
Lemma \ref{lip_k_growth_rates_lemma}, 
with $A := K_1 + K_2$, $r_0 := \ep_0$, $\rho := \gamma$,
$p := z$, $n := k$ and $q := k$, to conclude that
(cf. \eqref{lip_k_growth_rate_conc})
\begin{multline}
    \label{lip_k_C0_cover_thm_pf_conc}
        \left|\left| F^{(s)}(x) 
        \right|\right|_{\cl(V^{\otimes s};W)} 
        \leq 
        \min \left\{ K_1 + K_2 , (K_1 + K_2) \de_0^{\gamma - s}
        + \ep_0 \sum_{j=0}^{k-s} \frac{1}{j!} \de_0^j
        \right\} \\
        \leq
        \min \left\{ K_1 + K_2 , (K_1 + K_2) \de_0^{\gamma - l}
        + \ep_0 e^{\de_0} \right\}
        \stackrel{
        \eqref{lip_k_C0_thm_de0_prop}
        }{\leq}
        \ep
\end{multline}
where we have used that $\de_0^{\gamma-s} \leq 
\de_0^{\gamma -l}$. Since $F = \psi - \vph$,
the arbitrariness of $s \in \{0, \ldots , l\}$
and of $x \in \Sigma$ ensure that 
\eqref{lip_k_C0_cover_thm_pf_conc} gives the 
bounds claimed in \eqref{lip_k_cover_C0_conc}
This completes the proof of Theorem 
\ref{lip_k_cover_C0_thm}.
\end{proof}

\section{Proof of the Single-Point Lipschitz Sandwich Theorem}
\label{single_point_lip_sand_thm_pf_sec}
In this section we prove the 
\textit{Single-Point Lipschitz Sandwich} Theorem
\ref{lip_k_ball_estimates_thm}. 
Our approach is to alter the constant $\de_0$ 
appearing in the \textit{Pointwise Lipschitz Sandwich} 
Theorem \ref{lip_k_cover_C0_thm} in order to 
strengthen the conclusions to an estimate on the full 
$\Lip(\eta)$-norm of the difference.

To be more precise, recall that $\Sigma \subset V$ 
is closed and $\gamma > 0$ with $k \in \Z_{\geq 0}$ 
such that $\gamma \in (k,k+1]$.
Let $\eta \in (0, \gamma)$, $\ep > 0$, 
$(K_1 , K_2) \in \left( \R_{\geq 0} \times \R_{\geq 0} \right) 
\setminus \{(0,0)\}$, 
and $0 \leq \ep_0 < \min \left\{ K_1 + K_2 , \ep\right\}$.
Retrieve the constant 
$\de_0 = \de_0 (K_1+K_2,\ep,\ep_0,\gamma) > 0$ arising in 
Theorem \ref{lip_k_cover_C0_thm} for the choice 
$l := k$. Given a point $p \in \Sigma$, define 
$\Omega := \Sigma ~\cap~ \ovB_V(p,\de_0)$.

Suppose
$\psi = \left(\psi^{(0)} , \ldots , 
\psi^{(k)} \right) \in \Lip(\gamma,\Sigma,W) ,
\vph = \left( \vph^{(0)} , \ldots , \vph^{(k)}
\right) \in \Lip(\gamma,\Sigma,W)$ satisfy the norm 
bounds $||\psi||_{\Lip(\gamma,\Sigma,W)} \leq K_1$ and  
$||\vph||_{\Lip(\gamma,\Sigma,W)} \leq K_2$.
Further suppose that for every $j \in \{0, \ldots ,k\}$
the difference $\psi^{(j)}(p) - \vph^{(j)}(p) \in 
\cl(V^{\otimes j};W)$ satisfies 
$\left|\left| \psi^{(j)}(p) - \vph^{(j)}(p)
\right|\right|_{\cl(V^{\otimes j};W)} \leq \ep_0$.
Then by applying Theorem \ref{lip_k_cover_C0_thm},
for the choices $l$, $\Sigma$ and $B$ there as
$k$, $\Omega$ and $\{p\}$ here respectively,
we may conclude that for every $s \in \{0, \ldots , k\}$
and every $x \in \Omega$ we have 
$\left|\left| \psi^{(s)}(x) - \vph^{(s)}(x)
\right|\right|_{\cl(V^{\otimes s};W)} \leq \ep$.

We will prove the 
\textit{Single-Point Lipschitz Sandwich Theorem} 
\ref{lip_k_ball_estimates_thm}, by establishing that
after reducing the constant $\de_0$, allowing 
it to additionally depend on $\eta$, we may strengthen 
these pointwise bounds into a bound on the full
$\Lip(\eta,\Omega,W)$ norm of $\psi - \vph$.
We do so by appealing to the local Lipschitz estimates 
established in Lemmas \ref{lip_k_lip_norm_ball_bound_lemma} 
and \ref{lip_k_func_ext_lemma_gen_eta} 
in Section \ref{local_lip_bds_sec}.

There is a natural dichotomy within this strategy 
between the case that $\eta \in (k, \gamma)$ 
and the case that $\eta \in (0,k]$. 
We first use Lemma \ref{lip_k_lip_norm_ball_bound_lemma}
to establish the \textit{Single-Point Lipschitz Sandwich}
Theorem \ref{lip_k_ball_estimates_thm} in the simpler case that 
$\eta \in (k,\gamma)$.

\begin{proof}[Proof of Theorem 
\ref{lip_k_ball_estimates_thm} when $\eta \in (k,\gamma)$]
Assume that V and W are Banach spaces and that the 
tensor powers of V are all equipped with admissible 
norms (cf. Definition \ref{admissible_tensor_norm}).
Let $\Sigma \subset V$ be non-empty and closed.
Let $\ep, \gamma > 0$ with 
$k \in \Z_{\geq 0}$ such that $\gamma \in (k,k+1]$,
$(K_1 , K_2) \in \left( \R_{\geq 0} \times \R_{\geq 0} \right) 
\setminus \{(0,0)\}$, and 
$0 \leq \ep_0 < \min \left\{ K_1 + K_2 , \ep \right\}$, and 
$\eta \in (k,\gamma)$.
For notational convenience we let $K_0 := K_1 + K_2$.
With a view to later applying Lemma 
\ref{lip_k_lip_norm_ball_bound_lemma}, define $\de_0 = 
\de_0(K_0,\ep,\ep_0, \gamma ) > 0$ by
\beq
    \label{lip_k_ball_estimates_easy_proof_de0_def}
        \de_0 := \sup \left\{ \th \in (0,1] ~:~
        K_0(2\th)^{\gamma - \eta}  \leq 
        \min \left\{ K_0 , \ep \right\}
        \text{ and }
        K_0 \th^{\gamma - k} + \ep_0 e^{\th}
        \leq \min \left\{ K_0 , \ep \right\}
        \right\}
        > 0.
\eeq
It initially appears that $\de_0$ additionally depends on $k$.
However, $k$ is determined by $\gamma$, thus any 
dependence on $k$ is really dependence on $\gamma$.
We now fix the value of $\de_0 > 0$ for the remainder 
of the proof. We record that
\eqref{lip_k_ball_estimates_easy_proof_de0_def}
ensures that $\de_0 \leq 1$ and 
\beq
    \label{lip_k_ball_estimates_easy_proof_de0_def_B}
        (\bI) \quad 
        K_0(2 \de_0)^{\gamma - \eta} 
        \leq 
        \min \left\{ K_0 , \ep \right\}
        \qquad \text{and} \qquad
        (\bII) \quad
        K_0 \de_0^{\gamma - k} + \ep_0 e^{\de_0}
        \leq
        \min \left\{ K_0 , \ep \right\}.
\eeq
Now assume that $p \in \Sigma$ and that
$\psi = \left(\psi^{(0)} ,\ldots ,\psi^{(k)}\right)$
and 
$\vph = \left(\vph^{(0)} ,\ldots ,\vph^{(k)}\right)$
are elements in $\Lip(\gamma,\Sigma,W)$ with 
$||\psi||_{\Lip(\gamma,\Sigma,W)} \leq K_1$ and
$||\vph||_{\Lip(\gamma,\Sigma,W)} \leq K_2$.
Further suppose that for every $l \in \{0, \ldots , k\}$
the difference 
$\psi^{(l)}(p) - \vph^{(l)}(p) \in \cl(V^{\otimes l};W)$
satisfies the bound
\beq
    \label{lip_k_sand_thm_B_close_assump_pf}
        \left|\left| \psi^{(l)}(p) - \vph^{(l)}(p) 
        \right|\right|_{\cl(V^{\otimes l};W)} 
        \leq \ep_0.
\eeq
Define $\Omega := \ovB_V(p,\de_0) ~\cap~ \Sigma$
and $F \in \Lip(\gamma,\Sigma,W)$ by $F := \psi - \vph$,
so that for every $j \in \{0, \ldots , k\}$ we have 
$F^{(j)} = \psi^{(j)} - \vph^{(j)}$. We apply Lemma
\ref{lip_k_lip_norm_ball_bound_lemma} to $F$,
with $A:=K_0 = K_1 + K_2$, $r_0:=\ep_0$, $\rho := \gamma$, $n:=k$,
$z := p$, $\th := \eta$ and $\de := \de_0$, to conclude 
both that $F \in \Lip(\eta,\Omega,W)$ and
(cf. \eqref{lip_k_lip_norm_ball_bound_lemma_conc}) that
\begin{align*}
        || F ||_{\Lip(\eta ,\Omega,W)} 
        \leq 
        \max \left\{
        K_0(2\de_0)^{\gamma - \eta} ~,~
        \min \left\{ K_0 ~,~ K_0 \de_0^{\gamma - k}
        + \ep_0 e^{\de_0} \right\}
        \right\} 
        &\stackrel{
        (\bII) \text{ of }
        \eqref{lip_k_ball_estimates_easy_proof_de0_def_B}
        }{\leq}
        \max \left\{
        K_0(2\de_0)^{\gamma - \eta} ~,~ \ep
        \right\} \\
	    &\stackrel{ (\bI) \text{ of }
        \eqref{lip_k_ball_estimates_easy_proof_de0_def_B}
        }{\leq}
        \max \left\{ \ep ~,~ \ep\right\}   
        =
        \ep.
\end{align*}
Since $F = \psi - \vph$ and 
$\Omega := \Sigma ~\cap~ \ovB_V(p,\de_0)$, 
this is precisely the estimate 
claimed in \eqref{lip_k_sing_point_cover_thm_conc}.
This completes the proof of Theorem 
\ref{lip_k_ball_estimates_thm} for $\eta \in (k,\gamma)$.
\end{proof}
\vskip 4pt
\noindent
We now turn our attention to using Lemma 
\ref{lip_k_func_ext_lemma_gen_eta} to establish 
the \textit{Single-Point Lipschitz Sandwich} Theorem 
\ref{lip_k_ball_estimates_thm} in the more 
challenging case that $\eta \in (0,k]$.

\begin{proof}[Proof of Theorem 
\ref{lip_k_ball_estimates_thm} for $0 < \eta \leq k$]
Assume that V and W are Banach spaces and that the 
tensor powers of V are all equipped with admissible 
norms (cf. Definition \ref{admissible_tensor_norm}).
Let $\Sigma \subset V$ be closed and non-empty.
Let $\ep , \gamma > 0$ with $k \in \Z_{\geq 1}$ 
such that $\gamma \in (k,k+1]$, 
$(K_1 , K_2) \in \left( \R_{\geq 0} \times \R_{\geq 0} \right) 
\setminus \{(0,0)\}$, and $\eta \in (0,k]$.
Observe that this requires $1 \leq k < \gamma$.
Let $q \in \{0, \ldots , k-1\}$ such that 
$\eta \in (q,q+1] \subset (0,k]$. Finally let 
$0 \leq \ep_0 < \min \left\{ K_1 + K_2 , \ep \right\}$.
For notational convenience, we set $K_0 := K_1 + K_2 > 0$ 
for the remainder of the proof

Our strategy is to establish the desired
$\Lip(\eta)$-norm bounds via an application of Lemma
\ref{lip_k_func_ext_lemma_gen_eta}. For this purpose 
we retrieve the constant $\de_{\ast}$ arising in 
Lemma \ref{lip_k_func_ext_lemma_gen_eta}
for $A := K_0 = K_1 + K_2$, $r_0 := \ep_0$, 
$\rho := \gamma$ and $\th := \eta$.
Note that we are not actually applying Lemma 
\ref{lip_k_func_ext_lemma_gen_eta}, but simply
retrieving a constant in preparation for its future application.

Let $\de_0 := \min \left\{ 1 ~,~\de_{\ast} \right\} > 0$, 
which depends only
on $K_0 = K_1 + K_2$, $\ep_0$, $\gamma$ and $\eta$.
In order to ensure that applying Lemma 
\ref{lip_k_func_ext_lemma_gen_eta} yields the  
desired $\Lip(\eta)$-norm estimate, we will allow
ourselves to (potentially) further reduce $\de_0$, 
additionally now depending on $\ep$. With the benefit of 
hindsight, it will suffice to alter $\de_0$ to ensure that 
\beq
	\label{lip_k_ball_estimates_thm_de0}
		\left\{
		\begin{aligned}
		&	(\bA) \quad 
			\max \left\{ 1 + \left( 2 \de_0 
			\right)^{\frac{\gamma - k}{2}}
			~,~ 1 + \sqrt{2\de_0} \right\}
			< 2, \quad \left( \text{In particular, } 
			2\de_0 < 1\right),\\
		&	(\bB) \quad 
			\left( 2 \de_0 
			\right)^{\frac{\gamma - \eta}{2} 
			+ \frac{q+1 - \eta}{2}}
			\leq
			\frac{\ep}{2^{k-q}K_0}, \\
		&	(\bC) \quad
			\left( 1 + (2\de_0)^{\frac{\gamma - k}{2}}
			\right) 
			\left( \de_0^{\gamma - k} K_0 + 
			\ep_0 e^{\de_0}
			\right) \leq \ep, \\
		&	(\bD) \quad
			2^{k-q} \left( \de_0^{\gamma - k} K_0
			+ \ep_0 \de_0 e^{\de_0} \right)
			+ \left(\frac{1 + \sqrt{2\de_0}}{1 - 2\de_0}
			\right)
			\ep_0 e^{\de_0}
			\leq \ep, \quad \text{and} \\
		&	(\bE) \quad
			\ep_0 e^{\de_0} \leq 
			\left( 1 - \de_0 \right) \ep.
		\end{aligned}
		\right.
\eeq
We now fix the value of 
$\de_0 = \de_0(K_0,\gamma,\eta,\ep_0 ,\ep) > 0$
for the remainder of the proof.
Since $K_0 := K_1 + K_2$ we note that $\de_0$ has the 
claimed dependencies.

Now let $p \in \Sigma$ and assume that $\psi = 
\left( \psi^{(0)} , \ldots , \psi^{(k)} \right)$
and $\vph = \left( \vph^{(0)} , \ldots , \vph^{(k)}
\right)$ are in $\Lip(\gamma,\Sigma,W)$ 
with 
$|| \psi ||_{\Lip(\gamma,\Sigma,W)} \leq K_1$ and 
$|| \vph ||_{\Lip(\gamma,\Sigma,W)} \leq K_2$. 
Suppose that for every integer $l \in \{0, \ldots , k\}$
the difference $\psi^{(l)}(p) - \vph^{(l)}(p) 
\in \cl(V^{\otimes l};W)$ satisfies the bound
\beq
    \label{lip_k_sing_point_sand_thm_p_close_assump}
        \left|\left| \psi^{(l)}(p) - \vph^{(l)}(p)
        \right|\right|_{\cl(V^{\otimes l};W)}
        \leq \ep_0.
\eeq
Define $\Omega := \ovB_V(p,\de_0) ~\cap~ \Sigma$
and $F \in \Lip(\gamma,\Sigma,W)$ by 
$F := \psi - \vph$ so that for every 
$j \in \{0, \ldots , k\}$ we have 
$F^{(j)} = \psi^{(j)} - \vph^{(j)}$.

We begin with the case that $\ep_0 = 0$.
Since $\de_0 \leq 1$, the bounds
\eqref{lip_k_sing_point_sand_thm_p_close_assump} 
allow us to apply
Lemma \ref{lip_k_func_ext_lemma_gen_eta} to $F$, with
$A := K_0 (=K_1+K_2)$, $r_0 := \ep_0$, $\rho := \gamma$, 
$\th := \eta$ and $\de := \de_0$, to conclude that 
(cf. \eqref{lip_k_ball_control_r0=0_conc})
\beq
	\label{lip_k_ball_estimates_thm_ep0=0_est_A}
		\left|\left| F_{[q]} 
		\right|\right|_{\Lip(\eta,\Omega,W)} 
		\leq 
		\left( 1 + (2\de_0)^{\frac{\gamma - k}{2}} \right) 
		\left( 1+\sqrt{2\de_0} \right)^{k-(q+1)}
		(2\de_0)^{\frac{\gamma - \eta}{2} + 
		\frac{q+1-\eta}{2}} K_0.
\eeq
We compute that
\begin{align*} 
	\left|\left| F_{[q]} 
	\right|\right|_{\Lip(\eta,\Omega,W)}
	&\stackrel{
	\eqref{lip_k_ball_estimates_thm_ep0=0_est_A}
	}{\leq}
	\left( 1 + (2\de_0)^{\frac{\gamma - k}{2}} \right) 
	\left( 1+\sqrt{2\de_0} \right)^{k-(q+1)}
	(2\de_0)^{\frac{\gamma - \eta}{2} + 
	\frac{q+1-\eta}{2}} K_0 \\
	&\stackrel{
	(\bA) ~\text{in}~\eqref{lip_k_ball_estimates_thm_de0}
	}{\leq} 2^{k-q}
	(2\de_0)^{\frac{\gamma - \eta}{2} + 
	\frac{q+1-\eta}{2}} K_0
	\stackrel{
	(\bB) ~\text{in}~
	\eqref{lip_k_ball_estimates_thm_de0}
	}{\leq}
	2^{k-q}
	\frac{\ep}{2^{k-q}K_0} K_0 = \ep.
\end{align*}
Recalling that 
$\Omega := \Sigma ~\cap~ \ovB_V(p,\de_0)$
and $F := \psi - \vph$, this is precisely 
the estimate claimed in 
\eqref{lip_k_sing_point_cover_thm_conc}, and 
our proof is complete for the case that $\ep_0 = 0$.

Now consider the case that $\ep_0 > 0$. 
Recalling how we chose $\de_0$, the bounds
\eqref{lip_k_sing_point_sand_thm_p_close_assump} 
allow us to apply Lemma
\ref{lip_k_func_ext_lemma_gen_eta} to $F$, with 
$A := K_0(=K_1+K_2)$, $r_0 := \ep_0$, $\rho := \gamma$, 
$\th := \eta$ and $\de := \de_0$, to conclude 
via \eqref{lip_k_ball_control_r0>0_conc} that
\beq
	\label{lip_k_ball_estimate_thm_r0>0_est_A}
		\left|\left| F_{[q]} 
		\right|\right|_{\Lip(\eta,\Omega,W)} 
		\leq
		\max \left\{ (2\de_0)^{q+1-\eta} \e
		~,~
		\min \left\{ \e ~,~ \de_0 \e 
		+ \ep_0 e^{\de_0}
		\right\} \right\}
\eeq
for $\e = \e(K_0,\gamma,\eta,\ep_0)>0$ defined by
(cf. \eqref{lip_k_ball_norm_cal_e_def})
\beq
	\label{lip_k_ball_estimate_thm_cal_e_def}
		\e := \left( 
		1 + (2\de_0)^{\frac{\gamma - k}{2}}\right)
		\left( 1 + \sqrt{2\de_0} \right)^{k - (q+1)} 
		\left( \de_0^{\gamma - (q+1)} K_0
		+ \ep_0 \de_0^{k-(q+1)} e^{\de_0} \right)
		+
		\bX_{k-(q+1)}(\de_0)
\eeq
where, for $t \in \{0 , \ldots , k-1 \}$, 
the quantity $\bX_t(\de)$ is defined by (cf. 
\eqref{lip_k_ball_norm_control_r0>0_bX_t_statement})
\beq
	\label{lip_k_ball_estimate_thm_r0>0_bX_t}
		\bX_t(\de) := \twopartdef {0}{t=0}
		{\left( 1 + \sqrt{2\de_0}\right)
		\ep_0e^{\de_0}
		\sum_{j=0}^{t-1} \de_0^{j} 
		\left(1 + \sqrt{2\de_0}\right)^{j}}{t \geq 1.}
\eeq
We first prove that 
$\left( 1 - \de_0 \right) \e \geq \ep_0 e^{\de_0}$.

If $k = q+1$, then
\eqref{lip_k_ball_estimate_thm_cal_e_def} ensures
that $\left(1-\de_0\right) \e \geq 
\left( 1 - \de_0 \right) 
\left( 1 + (2\de_0)^{\frac{\gamma -k}{2}}\right)
\ep_0 e^{\de_0}$.
If we are able to conclude that 
$\left( 1 - \de_0 \right) 
\left( 1 + (2\de_0)^{\frac{\gamma -k}{2}}\right)
\geq 1$ then our desired estimate 
$\left( 1 - \de_0 \right) \e \geq \ep_0 e^{\de_0}$
is true. The required lower bound 
$\left( 1 - \de_0 \right) 
\left( 1 + (2\de_0)^{\frac{\gamma -k}{2}}\right)
\geq 1$ is equivalent to 
$(2\de_0)^{\frac{\gamma -k}{2}} - \de_0 
- \de_0 (2\de_0)^{\frac{\gamma -k}{2}} \geq 0$.
A consequence of (\bA) in 
\eqref{lip_k_ball_estimates_thm_de0} is that 
$(2\de_0)^{\frac{\gamma -k}{2}} < 1$.
This tells us that 
$\de_0 + \de_0 (2\de_0)^{\frac{\gamma -k}{2}}
\leq 2 \de_0 \leq (2\de_0)^{\frac{\gamma -k}{2}}$
where the latter inequality is true since 
$2\de_0 < 1$ and $\frac{\gamma-k}{2} < 1$.
Hence 
$\left( 1 - \de_0 \right) 
\left( 1 + (2\de_0)^{\frac{\gamma -k}{2}}\right)
\geq 1$ and so we have 
$\left( 1 - \de_0 \right) \e \geq \ep_0 e^{\de_0}$
as required.

If $k > q +1$, then  
\eqref{lip_k_ball_estimate_thm_cal_e_def} 
and \eqref{lip_k_ball_estimate_thm_r0>0_bX_t} 
yield $\left(1-\de_0\right) \e \geq 
\left( 1 - \de_0 \right) \bX_{k-(q+1)}(\de_0)
\geq \left( 1 - \de_0 \right) 
\left( 1 + \sqrt{2\de_0} \right) \ep_0 e^{\de_0}$.
If we are able to conclude that 
$\left( 1 - \de_0 \right) 
\left( 1 + \sqrt{2\de_0} \right)
\geq 1$ then our desired estimate 
$\left( 1 - \de_0 \right) \e \geq \ep_0 e^{\de_0}$
is true. The required lower bound 
$\left( 1 - \de_0 \right) 
\left( 1 + \sqrt{2\de_0} \right)
\geq 1$ is equivalent to 
$\sqrt{2\de_0} - \de_0 - \de_0 \sqrt{2\de_0} \geq 0$.
A consequence of (\bA) in 
\eqref{lip_k_ball_estimates_thm_de0} is that 
$\sqrt{2\de_0} < 1$.
This tells us that $\de_0 + \de_0 \sqrt{2\de_0} 
\leq 2 \de_0 \leq \sqrt{2\de_0}$
where the latter inequality is true since $2\de_0 < 1$.
Hence 
$\left( 1 - \de_0 \right) \left( 1 + \sqrt{2\de_0}\right)
\geq 1$ and so we have 
$\left( 1 - \de_0 \right) \e \geq \ep_0 e^{\de_0}$
as required.

Having established that 
$\left(1-\de_0\right) \e \geq \ep_0 e^{\de_0}$
we observe that 
\eqref{lip_k_ball_estimate_thm_r0>0_est_A}
becomes
\beq
	\label{lip_k_ball_estimate_thm_r0>0_est_B}
		\left|\left| F_{[q]} 
		\right|\right|_{\Lip(\eta,\Omega,W)}
		\leq
		\max \left\{ (2\de_0)^{q+1-\eta} \e
		~,~ \de_0 \e + \ep_0 e^{\de_0}
		\right\}
\eeq
We now prove the upper bound for $\e$ that 
$\e \leq \ep$. For this purpose note that when 
$k = q+1$ \eqref{lip_k_ball_estimate_thm_cal_e_def} 
yields that
\beq
	\label{lip_k_ball_estimate_thm_cal_e_ub_A}
		\e =
		\left( 1 + (2\de_0)^{\frac{\gamma - k}{2}}
		\right) 
		\left( \de_0^{\gamma - k} K_0 + 
		\ep_0 e^{\de_0} \right)
		\stackrel{(\bC) ~\text{in}~
		\eqref{lip_k_ball_estimates_thm_de0}
		}{\leq}
		\ep
\eeq
since $\bX_0(\de_0)=0$ from 
\eqref{lip_k_ball_estimate_thm_r0>0_bX_t}.
If, however, $k > q +1$ then $k - (q+1) \geq 1$ and so,
recalling that (\bA) in 
\eqref{lip_k_ball_estimates_thm_de0} ensures that 
that $\de_0(1+\sqrt{2\de_0}) < 2 \de_0 < 1$, 
we have 
\beq
	\label{lip_k_ball_estimate_thm_cal_e_ub_k>q+1_est_A}
		\bX_{k - (q+1)}(\de_0)
		\stackrel{
		\eqref{lip_k_ball_estimate_thm_r0>0_bX_t}}{=}
		\left( 1 + \sqrt{2\de_0} \right) \ep_0 e^{\de_0}
		\sum_{j=0}^{k-(q+1)} \de_0^j
		\left( 1 + \sqrt{2\de_0} \right)^j
		\leq 
		\frac{\left( 1 + \sqrt{2\de_0} \right)} 
		{1 - 2\de_0}
		\ep_0 e^{\de_0}.
\eeq
Moreover, $2\de_0 < 1$ ensures that 
$\de_0^{\gamma - (q+1)} \leq \de_0^{\gamma - k}$ 
and
$\de_0^{k-(q+1)} \leq \de_0$. 
Hence we can 
combine \eqref{lip_k_ball_estimate_thm_cal_e_def} 
and 
\eqref{lip_k_ball_estimate_thm_cal_e_ub_k>q+1_est_A} 
to obtain that
$$\e \leq 2^{k-q} \left(
	\de_0^{\gamma - k} K_0 + 
	\ep_0 \de_0 e^{\de_0}
	\right)
	+
	\frac{\left( 1 + \sqrt{2\de_0} \right)} 
	{1 - 2\de_0}
	\ep_0 e^{\de_0} 
	\stackrel{(\bD) ~\text{in}~
	\eqref{lip_k_ball_estimates_thm_de0}}
	{\leq} \ep.$$
Therefore in both the case that $k=q+1$ and 
the case that $k > q+1$ we obtain that 
\beq
	\label{lip_k_ball_estimate_thm_cal_e_ub}
		\e \leq \ep.
\eeq
We complete the proof by using the upper bound
in \eqref{lip_k_ball_estimate_thm_cal_e_ub} to 
control $\left|\left| F_{[q]} 
\right|\right|_{\Lip(\eta,\Omega,W)}$.
Recalling that (\bA) in 
\eqref{lip_k_ball_estimates_thm_de0}
means that $2 \de_0 < 1$, we have that
\beq
	\label{lip_k_ball_estimate_thm_eta<q+1_A}
		(\bI) \quad
		\left( 2 \de_0 \right)^{q+1-\eta} \e
		\leq
		\e
		\stackrel{
		\eqref{lip_k_ball_estimate_thm_cal_e_ub}
		}{\leq}
		\ep
		\qquad \text{and} \qquad
		(\bII) \quad 
		\de_0 \e + \ep_0 e^{\de_0}
		\stackrel{
		\eqref{lip_k_ball_estimate_thm_cal_e_ub}
		}{\leq}
		\de_0 \ep + \ep_0 e^{\de_0}
		\stackrel{(\bE) ~\text{in}~
		\eqref{lip_k_ball_estimates_thm_de0}}{\leq}
		\ep.
\eeq
Thus
\beq
	\label{lip_k_ball_estimate_thm_r0>0_est_C}
		\left|\left| F_{[q]} 
		\right|\right|_{\Lip(\eta,\Omega,W)} 
		\stackrel{
		\eqref{lip_k_ball_estimate_thm_r0>0_est_B}
		}{\leq}
		\max \left\{ (2\de_0)^{q+1-\eta} \e
		, \de_0 \e + \ep_0 e^{\de_0}
		\right\}
		\stackrel{(\bI) \text{ in }
		\eqref{lip_k_ball_estimate_thm_eta<q+1_A}
		}{\leq}
		\max \left\{  \ep
		, \de_0 \e + \ep_0 e^{\de_0}
		\right\}
		\stackrel{(\bII) \text{ in }
		\eqref{lip_k_ball_estimate_thm_eta<q+1_A}
		}{=} \ep.
\eeq
Recalling that 
$\Omega := \Sigma ~\cap~ \ovB_V(p,\de_0)$
and $F := \psi - \vph$, 
\eqref{lip_k_ball_estimate_thm_r0>0_est_C}
is precisely the estimate claimed in 
\eqref{lip_k_sing_point_cover_thm_conc}, and 
our proof is complete for the case that $\ep_0 > 0$.
Having already established the conclusion for the 
case that $\ep_0 = 0$, this completes the proof of 
Theorem \ref{lip_k_ball_estimates_thm} for the case 
that $\eta \in (0,k]$.
\end{proof}

\section{Proof of the Lipschitz Sandwich Theorem 
\ref{lip_k_cover_thm}}
\label{lip_sand_thm_pf_sec}
In this section we establish the full
\textit{Lipschitz Sandwich Theorem} \ref{lip_k_cover_thm}.
Our strategy to prove this result is to 
patch together the local Lipschitz bounds achieved
by the \textit{Single-Point Lipschitz Sandwich}
Theorem \ref{lip_k_ball_estimates_thm} in a similar 
spirit to the patching of local Lipschitz bounds 
in Lemma 1.16 in \cite{Bou15}.
We do not necessarily have local Lipschitz bounds on a 
small ball centred at \textit{any} point in $\Sigma$; 
we only have such estimates for points in the closed subset
$B \subset \Sigma$, and we do \textit{not} require that $B = \Sigma$.
Consequently, our patching is more complicated than the patching 
used in Lemma 1.16 in \cite{Bou15}.

To be 
more precise, recall that $\Sigma \subset V$ is closed
and $\gamma > 0$ with $k \in \Z_{\geq 0}$ such that
$\gamma \in (k,k+1]$. Let $\eta \in (0,\gamma)$, 
$\ep > 0$, 
$(K_1 , K_2) \in \left( \R_{\geq 0} \times \R_{\geq 0} \right) 
\setminus \{(0,0)\}$, 
and $0 \leq \ep_0 < 
\min \left\{ K_1 + K_2 , \ep \right\}$. Retrieve the 
constant $\de_0 = \de_0(K_1+K_2,\ep,\ep_0,\gamma,\eta) >0$
arising in the \textit{Single-Point Lipschitz Sandwich 
Theorem} \ref{lip_k_ball_estimates_thm}.
Assume that $B \subset \Sigma$ is a $\de_0$-cover of
$\Sigma$ in the sense that the $\de_0$-fattening of
$B$ contains $\Sigma$.

Suppose $\psi = \left( \psi^{(0)} , \ldots , 
\psi^{(k)} \right) \in \Lip(\gamma,\Sigma,W)$
and $\vph \in \left( \vph^{(0)} , \ldots , \vph^{(k)}
\right) \in \Lip(\gamma,\Sigma,W)$ both satisfy the 
norm bounds 
$||\psi||_{\Lip(\gamma,\Sigma,W)} \leq K_1$ and 
$||\vph||_{\Lip(\gamma,\Sigma,W)} \leq K_2$. 
Further suppose that for every $j \in \{0 , \ldots , k\}$
and every $x \in B$ the difference 
$\psi^{(j)}(x) - \vph^{(j)}(x) \in \cl(V^{\otimes j};W)$
satisfies $\left|\left| \psi^{(j)}(x) - \vph^{(j)}(x)
\right|\right|_{\cl(V^{\otimes j};W)} \leq \ep_0$.
Then given any point $p \in B$, we can apply the
\textit{Single-Point Lipschitz Sandwich Theorem} 
\ref{lip_k_ball_estimates_thm} to conclude that 
$|| \psi_{[q]} - \vph_{[q]} 
||_{\Lip(\eta,\Omega_p,W)} \leq \ep$
for $\Omega_p := \Sigma ~\cap~ \ovB_V(p,\de_0)$
and $q \in \{0, \ldots , k\}$ such that 
$\eta \in (q,q+1]$.

It may initially appear that since 
$\Sigma = \cup_{p \in B} \Omega_p$ these local 
$\Lip(\eta)$-norm bounds should combine together to
yield $||\psi_{[q]} - \vph_{[q]}
||_{\Lip(\eta,\Sigma,W)} \leq \ep$.
However, this is not necessarily true. 
For example, given any $\al \in (0,1)$, 
consider the function 
$F : [0,1] \cup [1+\al,2] \to \R$ defined by
$F(x) := 0$ if $x \in [0,1]$ and $F(x) := \al$
if $x \in [1+\al,2]$. Then $F \in \Lip(1,[0,1] \cup 
[1+\al,2],\R)$ and we have that 
$||F||_{\Lip(1,[0,1],\R)} = 0$ and 
$||F||_{\Lip(1,[1+\al,2],\R)} = \al$.
But $|F(1+\al)-F(1)| = \al = |1+\al - 1|$ and so 
$||F||_{\Lip(1,[0,1]\cup[1+\al,2],\R)} = 1 > \al$.

The main content of our proof of the 
\textit{Lipschitz Sandwich Theorem} \ref{lip_k_cover_thm}
is to overcome this problem. We prove that, by 
requiring the constant $\ep_0$ to be sufficiently 
small, depending only on $\ep, K_1+K_2, \gamma$ 
and $\eta$, rather than an arbitrary real number in 
the interval $[0, \min \left\{ K_1 + K_2 , \ep\right\})$,
we \textit{can} patch together local Lipschitz 
estimates resulting from an application of the 
\textit{Single-Point Lipschitz Sandwich Theorem} 
\ref{lip_k_ball_estimates_thm} to yield
global Lipschitz estimates throughout $\Sigma$.
A key point is to ensure that the sets $\Omega_p$ on 
which the \textit{Single-Point Lipschitz Sandwich Theorem} 
\ref{lip_k_ball_estimates_thm} yields local Lipschitz estimates
are not pairwise disjoint; that is, for each $p \in B$
there must be 
some $q \in B \setminus \{p\}$ such that the intersection 
$\Omega_p \cap \Omega_q$ is non-empty.

\begin{proof}[Proof of Theorem 
\ref{lip_k_cover_thm}]
Let $V$ and $W$ be Banach spaces, and assume  
that the tensor powers of 
$V$ are all equipped with admissible norms (cf. 
Definition \ref{admissible_tensor_norm}).
Assume that $\Sigma \subset V$ is non-empty and closed.
Let $\ep , \gamma > 0$ with $k \in \Z_{\geq 0}$
such that $\gamma \in (k,k+1]$, and 
$(K_1 , K_2) \in \left( \R_{\geq 0} \times \R_{\geq 0} \right) 
\setminus \{(0,0)\}$.
Further let $\eta \in (0, \gamma)$ with 
$q \in \{0, \ldots , k\}$ such that $\eta \in (q,q+1]$.
It suffices to prove the theorem under the additional
assumption that $\ep \leq K_0 := K_1 + K_2$; the conclusion 
\eqref{lip_k_cover_thm_conc} being valid for 
$\ep$ immediately means it is also valid for any
constant $\ep' \geq \ep$.

Define $\th := \frac{1}{2(1+e)} > 0$ and 
retrieve the constant $\de_0 >0$ arising from Theorem 
\ref{lip_k_ball_estimates_thm} for the same constants 
$K_1$, $K_2$, $\gamma$ and $\eta$ as here respectively,
and with the choices of $\th \ep$ and $\frac{\th}{2}\ep$ 
here as the constants $\ep$ and $\ep_0$ in 
Theorem \ref{lip_k_ball_estimates_thm} respectively.
Note that we are not actually applying 
Theorem \ref{lip_k_ball_estimates_thm}, but simply 
retrieving a constant in preparation for its future 
application. Examining the dependencies in Theorem 
\ref{lip_k_ball_estimates_thm} reveals that $\de_0 > 0$ 
depends only on $\ep$, $K_0$, $\gamma$ and $\eta$. 
If necessary, we reduce $\de_0$, without additional 
dependencies, so that $\de_0 \leq 1$. Finally, 
we replace the resulting constant $\de_0$ by $\de_0/2$. 

Our choice of $\de_0 > 0$ means that
if $\psi = \left(\psi^{(0)} , \ldots , \psi^{(k)}
\right)~,~ \vph = \left( \vph^{(0)} , \ldots , 
\vph^{(k)} \right) \in \Lip(\gamma,\Sigma,W)$ with
$|| \psi ||_{\Lip(\gamma,\Sigma,W)} \leq K_1$ and 
$|| \vph ||_{\Lip(\gamma,\Sigma,W)} \leq K_2$, and
if for a point $p \in \Sigma$ and every 
$l \in \{0, \ldots , k\}$ we have the estimate
$\left|\left| \psi^{(l)}(p) - \vph^{(l)}(p) 
\right|\right|_{\cl(V^{\otimes l};W)} \leq 
\frac{\th}{2}\ep$, then an application of 
Theorem \ref{lip_k_ball_estimates_thm} would allow 
us to conclude the estimate that
$\left|\left| \psi_{[q]} - \vph_{[q]} 
\right|\right|_{\Lip(\eta,\Omega_p,W)} \leq \th \ep$ 
for $\Omega_p := \ovB_V(p,2\de_0) ~\cap~ \Sigma$.

We now fix the value of $\de_0 > 0$ for the remainder
of the proof. Having done so, we define $\ep_0 > 0$ by 
\beq
	\label{lip_k_cover_thm_ep0_def}
		\ep_0 :=
		\min \left\{ \th ~,~
		\frac{\de_0^{\eta}}
		{e^{\de_0} \left( 1 + e^{\de_0}
		\right)} \right\}
		\frac{\ep}{2}> 0.
\eeq
Examining the dependencies in 
\eqref{lip_k_cover_thm_ep0_def} reveals
that $\ep_0$ depends only on $\ep$, $K_0 = K_1 + K_2$,
$\gamma$ and $\eta$. We may now fix the value of
$\ep_0 > 0$ for the the remainder of the proof.

Let $B \subset \Sigma$ satisfy that
\beq
    \label{lip_k_cover_thm_cover_prop}
        \Sigma \subset \bigcup_{x \in B}
        \ovB_{V} ( x , \de_0).
\eeq
Suppose 
$\psi = \left(\psi^{(0)} , \ldots , \psi^{(k)}
\right) , \vph = \left( \vph^{(0)} , \ldots , 
\vph^{(k)} \right) \in \Lip(\gamma,\Sigma,W)$ satisfy the 
$\Lip(\gamma,\Sigma,W)$ norm estimates
$|| \psi ||_{\Lip(\gamma,\Sigma,W)} \leq K_1$ and 
$|| \vph ||_{\Lip(\gamma,\Sigma,W)} \leq K_2$.
Further assume that whenever $l \in \{0,\ldots,k\}$
and $x \in B$ we have the estimate
$\left|\left| \psi^{(l)}(x) - \vph^{(l)}(x) 
\right|\right|_{\cl(V^{\otimes l};W)} \leq \ep_0$.
Let $p \in B$.
Recalling how we chose the constant $\de_0 > 0$ and 
that \eqref{lip_k_cover_thm_ep0_def} 
means that $\ep_0 \leq \frac{\th}{2}\ep$,
we may appeal to Theorem \ref{lip_k_ball_estimates_thm}
to conclude that
\beq
    \label{lip_k_cover_thm_bd_1}
        \left|\left| \psi_{[q]} - \vph_{[q]} 
        \right|\right|_{\Lip(\eta, \Omega_p,W)}
        \leq
        \th \ep
\eeq
where $\Omega_p := \Sigma ~\cap~ \ovB_V (p,2\de_0)$.
The arbitrariness of $p \in B$ allows us to conclude 
that the estimate \eqref{lip_k_cover_thm_bd_1} is valid 
for every $p \in B$.

We complete the proof of Theorem 
\ref{lip_k_cover_thm} by establishing 
that having the bounds 
\eqref{lip_k_cover_thm_bd_1} for every
$p \in B$ allows us to conclude that
$\left|\left| \psi_{[q]} - \vph_{[q]} 
\right|\right|_{\Lip(\eta,\Sigma,W)} \leq \ep$.
This is proven in the following claim.

\begin{claim}
\label{lip_k_cover_thm_claim}
If $F = \left( F^{(0)} , \ldots , F^{(k)} \right) 
\in \Lip(\gamma,\Sigma,W)$ satisfies, for every 
$l \in \{0,\ldots,k\}$ and every $z \in B$, that 
$\left|\left| F^{(l)}(z)  
\right|\right|_{\cl(V^{\otimes l};W)} \leq \ep_0$
and
$\left|\left| F_{[q]} 
\right|\right|_{\Lip(\eta, \Omega_z,W)} 
\leq \th \ep$, where 
$\Omega_z := \Sigma ~\cap~
\ovB_V (z,2\de_0)$, then we have 
\beq
    \label{lip_k_cover_thm_claim_conc}
        \left|\left|F_{[q]} 
        \right|\right|_{\Lip(\eta,\Sigma,W)}
        \leq \ep. 
\eeq
\end{claim}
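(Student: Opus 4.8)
The plan is to verify directly that $F_{[q]}=(F^{(0)},\ldots,F^{(q)})$ meets the two defining requirements for membership in $\Lip(\eta,\Sigma,W)$ with constant $\ep$: the pointwise bounds $||F^{(l)}(x)||_{\cl(V^{\otimes l};W)}\le\ep$ for $l\in\{0,\ldots,q\}$, $x\in\Sigma$, and the remainder bounds $||R^F_l(x,y)||_{\cl(V^{\otimes l};W)}\le\ep\,||y-x||_V^{\eta-l}$ for $l\in\{0,\ldots,q\}$, $x,y\in\Sigma$, where $R^F_l$ is the level-$l$ remainder term of the $\Lip(\eta)$-collection $F_{[q]}$. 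First I would record the global pointwise bounds: since $B$ is a $\de_0$-cover, every $x\in\Sigma$ satisfies $||x-z||_V\le\de_0$ for some $z\in B$, hence $x\in\Omega_z=\Sigma\cap\ovB_V(z,2\de_0)$, and the hypothesis $||F_{[q]}||_{\Lip(\eta,\Omega_z,W)}\le\th\ep$ gives $||F^{(l)}(x)||\le\th\ep\le\ep$ for all $l\le q$. This settles the pointwise part and supplies a crude uniform bound $||F^{(j)}(\cdot)||\le\th\ep$ to be reused.

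The substance is the remainder estimate, which I would prove by splitting on $r:=||y-x||_V$. If $r\le\de_0$, pick $z\in B$ with $||x-z||_V\le\de_0$; then $||y-z||_V\le r+\de_0\le 2\de_0$, so $x,y\in\Omega_z$ and the local hypothesis gives $||R^F_l(x,y)||\le\th\ep\,r^{\eta-l}\le\ep\,r^{\eta-l}$. If $r\ge 1$, the triangle inequality and the uniform bound give $||R^F_l(x,y)||\le\th\ep\bigl(1+\sum_{s=0}^{q-l}\tfrac{r^s}{s!}\bigr)\le\th\ep(1+e)r^{q-l}=\tfrac\ep2 r^{q-l}\le\ep\,r^{\eta-l}$, using $q-l<\eta-l$ (because $\eta>q$) and $r\ge 1$; here $\th=\tfrac1{2(1+e)}$ is used and no local information is needed.

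The remaining case, $\de_0<r<1$, is the crux: $x$ and $y$ need not lie in a common $\Omega_z$, yet a crude pointwise bound (which only yields $\tfrac\ep2$) is too weak since $\ep\,r^{\eta-l}$ can be as small as $\ep\de_0^{\eta-l}$. Here I would choose $a,b\in B$ with $||x-a||_V\le\de_0$ and $||y-b||_V\le\de_0$, expand $F^{(l)}(y)$ in the order-$q$ Taylor jet-polynomial of $F_{[q]}$ based at $b$ and each $F^{(l+s)}(x)$ in the one based at $a$, and use the elementary fact that the remainder terms of any polynomial jet vanish identically. This rewrites $R^F_l(x,y)$ as a combination of the error terms $R^F_l(b,y)$ and $R^F_{l+s}(a,x)$ — each controlled by the local $\Lip(\eta)$-estimates on $\Omega_b$ and $\Omega_a$ since $b,y\in\Omega_b$ and $a,x\in\Omega_a$ — together with a difference of jet polynomials at $a$ and $b$ all of whose coefficients are bounded by $\ep_0$. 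Using $r>\de_0$ and $\eta-l-s<0$, the error-term contribution is at most a fixed multiple of $\th\ep\,r^{\eta-l}$, namely $\le\tfrac\ep2 r^{\eta-l}$ after invoking $\th(1+e)=\tfrac12$ and $\de_0^{\eta-l}\le r^{\eta-l}$; and the polynomial-difference contribution is bounded by a fixed multiple of $\ep_0e^{\de_0}$, which is at most $\tfrac\ep2 r^{\eta-l}$ precisely because of the choice of $\ep_0$ in \eqref{lip_k_cover_thm_ep0_def} (in particular $\ep_0e^{\de_0}(1+e^{\de_0})\le\tfrac12\de_0^{\eta}\ep$ together with $\de_0\le 1$ and $\de_0^{\eta-l}\le r^{\eta-l}$). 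Adding the two contributions gives $||R^F_l(x,y)||\le\ep\,r^{\eta-l}$. Combining the three ranges with the pointwise bounds of the first step yields $||F_{[q]}||_{\Lip(\eta,\Sigma,W)}\le\ep$, establishing Claim \ref{lip_k_cover_thm_claim} and completing the proof of Theorem \ref{lip_k_cover_thm}.

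The hardest part will be this intermediate-separation case: it is the only place where the local estimates attached to the overlapping neighbourhoods $\{\Omega_z\}_{z\in B}$ must be genuinely patched together rather than applied on a single piece, and where the precise shape of the threshold $\ep_0$ — its $\de_0^{\eta}$-scaling in \eqref{lip_k_cover_thm_ep0_def}, rather than mere smallness — is indispensable; a constant $\ep_0$ depending only on $\ep$ would not suffice. In carrying this out I would also need to be slightly careful with the bookkeeping of the constants (e.g. the factors $e$, $1+e$, $e^{\de_0}$ and powers $\de_0^{\eta-l}$ versus $\de_0^{\eta-(l+s)}$), and, if the single-jet comparison above proves just too lossy when $y$ is near distance $1$ from $a$, to interpolate through a short chain of overlapping neighbourhoods joining $a$ to $b$ — which is exactly the role of the non-disjointness of the $\Omega_z$ noted before the claim.
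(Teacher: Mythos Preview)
Your approach is correct and is essentially the paper's own argument: expand $F^{(l)}(y)$ around a point $b\in B$ near $y$ and each $F^{(l+s)}(x)$ around a point $a\in B$ near $x$, bound the resulting local remainders by $\th\ep$ and the jet-coefficients at $a,b$ by $\ep_0$, and use $||x-y||_V\ge\de_0$ to convert everything into a multiple of $||x-y||_V^{\eta-l}$.

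Two small points of over-elaboration are worth noting. First, the paper uses only two cases (either $x,y$ lie in a common $\Omega_z$, or $||x-y||_V\ge\de_0$); your separate case $r\ge 1$ is unnecessary, since the ``intermediate'' expansion argument already yields $\le\ep\,||x-y||_V^{\eta-l}$ for every $r\ge\de_0$. Second, and relatedly, the chain-of-neighbourhoods contingency you flag at the end is never needed --- a single comparison at $a$ and $b$ suffices. The one piece of bookkeeping that does require care is the polynomial contribution $\sum_{s}\frac{1}{s!}\,\ep_0 e^{\de_0}r^{s}$: if you bound $\sum_{s}r^{s}/s!$ crudely by $e$ you get a factor $1+e$, which does \emph{not} match the $(1+e^{\de_0})$ in the definition of $\ep_0$; the fix (which is what the paper does) is to first write $r^{s}=r^{\eta-l}\,r^{-(\eta-l-s)}\le r^{\eta-l}\,\de_0^{-(\eta-l-s)}$ using $r\ge\de_0$, and \emph{then} sum, producing $\ep_0 e^{\de_0}\de_0^{-(\eta-l)}\sum_{s}\de_0^{s}/s!\le\ep_0 e^{2\de_0}\de_0^{-\eta}$, which matches $\ep_0$ exactly.
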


\begin{proof}[Proof of Claim
\ref{lip_k_cover_thm_claim}]
For each $l \in \{0, \ldots , k\}$ let
$R^{F}_l : \Sigma \times \Sigma \to
\cl( V^{\otimes l} ; W) $ denote the remainder term
associated to $F^{(l)}$. 
Therefore whenever $l \in \{0, \ldots , k \}$,
$x,y \in \Sigma$ and $v \in V^{\otimes l}$, we have that
(cf. \eqref{lip_k_tay_expansion})
\beq
    \label{lip_k_cover_thm_claim_expansions_1}
        R^F_l(x,y)[v] := 
        F^{(l)}(y)[v]
        -
        \sum_{s=0}^{k-l} \frac{1}{s!}
        F^{(l+s)}(x) \left[ 
        v \otimes (y-x)^{\otimes j}
        \right].
\eeq
If $q = k$ then we may work with the unaltered 
remainder terms defined in 
\eqref{lip_k_cover_thm_claim_expansions_1}.
But if $q < k$ then we must first appropriately
alter the remainder terms. 
For this purpose, for each $l \in \{0, \ldots , q\}$
we define $\hat{R}^F_l : 
\Sigma \times \Sigma \to \cl ( V^{\otimes l} ; W) $
for $x,y \in \Sigma$ and $v \in V^{\otimes l}$ by
\beq
    \label{lip_k_cover_thm_claim_alt_remainders}
        \hat{R}^{F}_l(x,y)[v]
        :=
        \twopartdef{R^{F}_l(x,y)[v]}{q=k}
        {R^{F}_l(x,y)[v]
        +
        \sum_{s=q+1-l}^{k-l} \frac{1}{s!}
        F^{(l+s)}(x) 
        \left[ v \otimes (y-x)^{\otimes s}
        \right]} {q < k.}
\eeq
It follows from 
\eqref{lip_k_cover_thm_claim_expansions_1}
and 
\eqref{lip_k_cover_thm_claim_alt_remainders}
that whenever $l \in \{0, \ldots , q\}$, 
$x,y \in \Sigma$ and $v \in V^{\otimes l}$ we have 
\beq
    \label{lip_k_cover_thm_claim_expansions_2}
        F^{(l)}(y)[v]
        =
        \sum_{s=0}^{q-l}
        F^{(l+s)}(x) \left[ 
        v \otimes (y-x)^{\otimes s} \right] 
        +
        \hat{R}^{F}_l(x,y)[v].
\eeq
For each $z \in B$, the assumption that
$||F_{[q]}||_{\Lip(\eta, \Omega_z,W)} 
\leq \th \ep$ for $\Omega_z := \Sigma ~\cap~
\ovB_V (z,2\de_0)$ tells us that
for every $l \in \{0,\ldots,q\}$ and any
$x , y \in \Sigma ~\cap~\ovB_V (z,2\de_0)$
we have
\beq
    \label{lip_k_cover_thm_claim_l_boundsB}
        (\bI) \quad \left|\left| F^{(l)}(x)
        \right|\right|_{\cl (V^{\otimes l} ; W)}
        \leq \th \ep
        \qquad \text{and} \qquad 
        (\bII) \quad 
        \left|\left| \hat{R}^{F}_l(x,y)
        \right|\right|_{\cl (V^{\otimes l} ; W)}
        \leq
        \th \ep || y - x ||_V^{\eta - l}.
\eeq
Consider $p \in \Sigma$. From 
\eqref{lip_k_cover_thm_cover_prop} we know that
$p \in \Sigma ~\cap~\ovB_V (z,\de_0)$ for some 
$z \in B$. Consequently the bound (\bI) in
\eqref{lip_k_cover_thm_claim_l_boundsB} holds
for $x := p$. Since $p \in \Sigma$ was arbitrary, we
conclude that for any $p \in \Sigma$ we have
\beq
    \label{lip_k_cover_thm_claim_global_l_bound}
        \left|\left| F^{(l)}(p)
        \right|\right|_{\cl (V^{\otimes l} ; W)}
        \leq
        \th \ep.
\eeq
Consider $l \in \{0, \ldots , q\}$ and  
$p,w \in \Sigma$. If there exists $z \in B$ for which 
$p,w \in \ovB_V (z, 2\de_0)$ then (\bII) in 
\eqref{lip_k_cover_thm_claim_l_boundsB} yields that
\beq
    \label{lip_k_cover_thm_claim_global_remain_bound_A}
        \left|\left| \hat{R}^{F}_l(p,w)
        \right|\right|_{\cl (V^{\otimes l} ; W)}
        \leq
        \th \ep || w - p ||_V^{\eta - l}. 
\eeq
Now suppose that no
single ball $\ovB_V(z , 2\de_0)$ contains both
$p$ and $w$. From 
\eqref{lip_k_cover_thm_cover_prop} we know that
$p \in \Sigma ~\cap~\ovB_V (z_i, \de_0)$ and 
$w \in \Sigma ~\cap~\ovB_V (z_j,\de_0)$ for some 
$z_i, z_j \in B$ which must be distinct.
In fact, since $w \notin 
\Sigma ~\cap~\ovB_V (z_i,2\de_0)$, we can conclude that
\beq
    \label{lip_k_cover_thm_claim_de0_sep}
        || w - p ||_V 
        \geq
        \de_0.
\eeq
Observe that from 
\eqref{lip_k_cover_thm_claim_expansions_2}
we have, for any $v \in V^{\otimes l}$, that
\beq
	\label{lip_k_cover_thm_claim_remain_def}
		\hat{R}^{F}_l(p,w)[v]
		=
		F^{(l)}(w)[v]
		-
		\sum_{s=0}^{q-l} \frac{1}{s!}
		F^{(l+s)}(p) \left[
		v \otimes (w - p)^{\otimes s}
		\right].
\eeq
We may further use 
\eqref{lip_k_cover_thm_claim_expansions_2}
to compute that
\beq
    \label{lip_k_cover_thm_claim_varphi_l_expand}
        F^{(l)}(w)[v]
        =
        \sum_{u=0}^{q-l} \frac{1}{u!}
        F^{(l+u)}(z_j) \left[ 
        v \otimes (w - z_j)^{\otimes u} 
        \right]
        +
        \hat{R}^{F}_l(z_j,w)[v].
\eeq
Since $w \in \ovB_V(z_j,\de_0)$ we may use
\eqref{lip_k_cover_thm_claim_global_remain_bound_A}
to conclude that
\beq
    \label{lip_k_cover_thm_claim_w_zj_remain_bound}
        \left|\left| \hat{R}^{F}_l(z_j,w)
        \right|\right|_{\cl (V^{\otimes l} ; W)}
        \leq
        \th \ep || w - z_j ||_V^{\eta - l}
        \leq
        \th \ep \de_0^{\eta - l}. 
\eeq
Additionally, since $z_j \in B$ we may compute that
\beq
	\label{lip_k_cover_thm_claim_w_zj_varphi_l_bound}
		\sum_{u=0}^{q-l} \frac{1}{u!}
        	\left|\left|F^{(l+u)}(z_j) \left[ 
        	v \otimes (w - z_j)^{\otimes u} 
        	\right] \right|\right|_W
        	\leq
        	\ep_0
        	\sum_{u=0}^{q-l} \frac{1}{u!}
        	||w - z_j||_V^u || v ||_{V^{\otimes l}}
        	\leq
        	\ep_0 e^{\de_0}
        	|| v ||_{V^{\otimes l}}.
\eeq
Combining 
\eqref{lip_k_cover_thm_claim_varphi_l_expand},
\eqref{lip_k_cover_thm_claim_w_zj_remain_bound},
and 
\eqref{lip_k_cover_thm_claim_w_zj_varphi_l_bound}
yields the estimate
\beq
	\label{lip_k_cover_thm_caim_varphi^l(w)_bound_a}
		\left| \left| F^{(l)}(w)[v]
		\right|\right|_W
		\leq
		\left( \th \ep \de_0^{\eta - l} 
		+ \ep_0 e^{\de_0} \right)
		|| v ||_{V^{\otimes l}}.
\eeq
Turning our attention to the second term in 
\eqref{lip_k_cover_thm_claim_remain_def},
note that for any $s \in \{0, \ldots , q - l\}$
we have via
\eqref{lip_k_cover_thm_claim_expansions_2}, 
for $v' := v \otimes (w-p)^{\otimes s} \in 
V^{\otimes (l+s)}$, that
\beq
    \label{lip_k_cover_thm_claim_varphi^l+s(p)_expand}
        F^{(l+s)}(p)
        \left[ v'
        \right]
        =
        \sum_{u=0}^{q-l-s} \frac{1}{u!}
        F^{(l+s+u)}(z_i) \left[ v'
        \otimes (p - z_i)^{\otimes u} 
        \right]
        +
        \hat{R}^{F}_{l+s}(z_i,p)
        \left[ v'
        \right].
\eeq
Since $p \in \ovB_V(z_i,\de_0)$ we may use
\eqref{lip_k_cover_thm_claim_global_remain_bound_A}
to conclude that
\beq
    \label{lip_k_cover_thm_claim_p_zi_remain_bound}
        \left|\left| \hat{R}^{F}_{l+s}(z_i,p)
        \right|\right|_{\cl (V^{\otimes l} ; W)}
        \leq
        \th \ep || p - z_i ||_V^{\eta - (l+s)}
        \leq
        \th \ep \de_0^{\eta - (l+s)}. 
\eeq
Via similar computations to those used to establish
\eqref{lip_k_cover_thm_claim_w_zj_varphi_l_bound},
the fact that $z_i \in B$ allows us to compute that
\beq
	\label{lip_k_cover_thm_claim_p_zi_varphi_l_bound}
		\sum_{u=0}^{q-l-s} \frac{1}{u!}
        	\left|\left|F^{(l+u+s)}(z_j) \left[ 
        	v' \otimes (p - z_i)^{\otimes u} 
        	\right] \right|\right|_W
        	\leq
        	\ep_0 e^{\de_0}
        	|| v' ||_{V^{\otimes (l+s)}}.
\eeq
Combining 
\eqref{lip_k_cover_thm_claim_varphi^l+s(p)_expand},
\eqref{lip_k_cover_thm_claim_p_zi_remain_bound}, and
\eqref{lip_k_cover_thm_claim_p_zi_varphi_l_bound}
yields that
\beq
    \label{lip_k_cover_thm_claim_varphi^l+s(p)_bound}
        \left|\left|F^{(l+s)}(p)
        \left[ v'
        \right]\right|\right|_W
        \leq
        \left( \ep_0 e^{\de_0}
        + \th \ep \de_0^{\eta - (l+s)}
        \right)
        || v' ||_{V^{\otimes (l+s)}}
        =
        \left( \ep_0 e^{\de_0}
        + \th \ep \de_0^{\eta - (l+s)}
        \right)
        || w - p ||_V^s
        || v ||_{V^{\otimes l}}
\eeq
where the last equality uses that
$v' := v \otimes (w-p)^{\otimes s}$
and that the tensor powers of $V$ are equipped
with admissible norms (cf. Definition 
\ref{admissible_tensor_norm}).
A consequence of 
\eqref{lip_k_cover_thm_claim_varphi^l+s(p)_bound}
is that
\beq
	\label{lip_k_cover_thm_claim_sum_varphi^l+s(p)_bound_a}
		\sum_{s=0}^{q-l} \frac{1}{s!}
		\left|\left|F^{(l+s)}(p)
        	\left[ v'
        	\right]\right|\right|_W
        	\leq
        	\sum_{s=0}^{q-l} \frac{1}{s!}
        	\left( \ep_0 e^{\de_0}
        	+ \th \ep \de_0^{\eta - (l+s)}
        	\right)
        	|| w - p ||_V^s
        	|| v ||_{V^{\otimes l}}.
\eeq
Since from \eqref{lip_k_cover_thm_claim_de0_sep}
we have that $\de_0 \leq || w - p ||_V$, we may 
multiply each term in the sum on the RHS of
\eqref{lip_k_cover_thm_claim_sum_varphi^l+s(p)_bound_a}
by $|| w - p||_V^{\eta - (l+s)} \de_0^{-(\eta - (l+s))}
\geq 1$ to conclude that
\beq
	\label{lip_k_cover_thm_claim_sum_varphi^l+s(p)_bound_b}
		\sum_{s=0}^{q-l} \frac{1}{s!}
		\left|\left|F^{(l+s)}(p)
        	\left[ v'
        	\right]\right|\right|_W
        	\leq
        	\sum_{s=0}^{q-l} \frac{1}{s!}
        	\left( \ep_0 e^{\de_0}
        	+ \th \ep \de_0^{\eta - (l+s)}
        	\right)
        	\de_0^{-(\eta-l-s)}
        	|| w - p ||_V^{\eta - l} 
        	|| v ||_{V^{\otimes l}}.
\eeq
Combining \eqref{lip_k_cover_thm_claim_remain_def},
\eqref{lip_k_cover_thm_caim_varphi^l(w)_bound_a}, and
\eqref{lip_k_cover_thm_claim_sum_varphi^l+s(p)_bound_b}
allows us to deduce that
\beq
	\label{lip_k_cover_thm_remain_pw_v_bound}
		\left|\left| \hat{R}^F_l(p,w)[v]
		\right|\right|_W
		\leq
		\left( \th \ep \de_0^{\eta - l} 
		+ \ep_0 e^{\de_0} +
		\sum_{s=0}^{q-l} \frac{1}{s!}
        	\left( \ep_0 e^{\de_0}
        	\de_0^{- (\eta - l - s)}
        	+ \th \ep
        	\right)
        	|| w - p ||_V^{\eta - l}
        	\right)
        	|| v ||_{V^{\otimes l}}.
\eeq
Observe that
\beq
	\label{lip_k_cover_thm_remain_pw_v_term1}
		\th \ep \de_0^{\eta - l} 
		\stackrel{
		\eqref{lip_k_cover_thm_claim_de0_sep}
		}{\leq}
		\th \ep || w - p||_V^{\eta - l},
\eeq
\beq
	\label{lip_k_cover_thm_remain_pw_v_term2}
		\ep_0 e^{\de_0} 
		\stackrel{
		\eqref{lip_k_cover_thm_claim_de0_sep}
		}{\leq}
		\ep_0 \de_0^{-(\eta - l)} e^{\de_0} 
		|| w - p||_V^{\eta - l},
\eeq
\beq
	\label{lip_k_cover_thm_remain_pw_v_term3}
        	\ep_0 e^{\de_0}
        	\de_0^{- (\eta - l)}
        	\sum_{s=0}^{q-l} \frac{1}{s!} \de_0^s
		\leq
		\ep_0 \de_0^{-(\eta-l)} e^{2\de_0}, \quad \text{and}
\eeq
\beq
	\label{lip_k_cover_thm_remain_pw_v_term4}
		\th \ep \sum_{s=0}^{q-l} \frac{1}{s!} 
		\leq
		\th e \ep.
\eeq
Combining 
\eqref{lip_k_cover_thm_remain_pw_v_term1},
\eqref{lip_k_cover_thm_remain_pw_v_term2},
\eqref{lip_k_cover_thm_remain_pw_v_term3}, and 
\eqref{lip_k_cover_thm_remain_pw_v_term4} with 
\eqref{lip_k_cover_thm_remain_pw_v_bound} yields 
\beq
	\label{lip_k_cover_thm_remain_pw_v_bound_2}
		\left|\left| \hat{R}^{\varphi}_l(p,w)[v]
		\right|\right|_W
		\leq
		\left( \th \ep (1 + e) 
		+ \ep_0 \de_0^{-(\eta - l)}
		e^{\de_0} \left(
		1 + e^{\de_0} \right) 
        	\right)
        	|| w - p ||_V^{\eta - l}
        	|| v ||_{V^{\otimes l}}.
\eeq
Taking the supremum over $v \in V^{\otimes l}$
with unit $V^{\otimes l}$-norm in 
\eqref{lip_k_cover_thm_remain_pw_v_bound_2}
yields
\beq
	\label{lip_k_cover_thm_remain_pw_bound}
		\left|\left| \hat{R}^{F}_l(p,w)
		\right|\right|_{ \cl (V^{\otimes l} ; W)}
		\leq
		\left( \th \ep (1 + e) 
		+ \ep_0 \de_0^{-\eta }
		e^{\de_0} \left(
		1 + e^{\de_0} \right) \right)
        || w - p ||_V^{\eta - l}
\eeq
since $\de_0 \leq 1$ means
$\de_0^{-(\eta -l)} \leq \de_0^{-\eta}$ 
for every $l \in \{0, \ldots , q\}$.

Together
\eqref{lip_k_cover_thm_claim_global_remain_bound_A},
\eqref{lip_k_cover_thm_remain_pw_bound} and the inequality
$\th \ep < \th \ep (1 + e) + \ep_0 \de_0^{-\eta }
e^{\de_0} \left(1 + e^{\de_0} \right)$ mean that 
for any $l \in \{0, \ldots , q\}$ and 
any $p,w \in \Sigma$ we have
\beq
	\label{lip_k_cover_thm_remain_bound}
		\left|\left| \hat{R}^{F}_l(p,w)
		\right|\right|_{ \cl (V^{\otimes l} ; W)}
		\leq
		\left( \th \ep (1 + e) 
		+ \ep_0
		\frac{e^{\de_0} \left(
		1 + e^{\de_0} \right)}
		{\de_0^{\eta }} \right)
        || w - p ||_V^{\eta - l}.
\eeq
The definitions 
\eqref{lip_k_cover_thm_claim_expansions_2},
the bounds 
\eqref{lip_k_cover_thm_claim_global_l_bound}, and 
the H\"{o}lder estimates 
\eqref{lip_k_cover_thm_remain_bound} 
tell us that
\begin{align*}
	\left|\left| F_{[q]} 
	\right|\right|_{\Lip(\eta,\Sigma,W)}
		&\leq \th \ep (1 + e) 
		+ \ep_0 \frac{e^{\de_0} \left(
		1 + e^{\de_0} \right)}
		{\de_0^{\eta }}  \\
		&\stackrel{
		\eqref{lip_k_cover_thm_ep0_def}
		}{\leq}
		\frac{1}{2(1+e)}\ep(1+e)
		+
		\frac{\ep}{2}\frac{\de_0^{\eta}}
		{e^{\de_0}
		\left( 1 + e^{\de_0}\right)}
		\frac{e^{\de_0} \left(
		1 + e^{\de_0} \right)}
		{\de_0^{\eta }}
		=
		\frac{\ep}{2} + \frac{\ep}{2}
		=
		\ep
\end{align*}
as claimed in \eqref{lip_k_cover_thm_claim_conc}.
This completes the proof of Claim 
\ref{lip_k_cover_thm_claim}
\end{proof}
\vskip 4pt
\noindent
Returning to the proof of Theorem 
\ref{lip_k_cover_thm} itself, we define 
$F := \psi - \vph \in \Lip(\gamma,\Sigma,W)$
so that for every $j \in \{0, \ldots , k\}$
we have $F^{(j)} = \psi^{(j)} - \vph^{(j)}$.
Then, by assumption, we have 
for every $j \in \{0,\ldots,k\}$ and every $z \in B$ that 
$\left|\left|F^{(j)}(z) 
\right|\right|_{\cl(V^{\otimes j};W)} \leq \ep_0$.
Moreover, \eqref{lip_k_cover_thm_bd_1} tells us 
that whenever $z \in B$ we have that 
$\left|\left|F_{[q]}
\right|\right|_{\Lip(\eta,\Omega_z,W)}\leq \th \ep$
for $\Omega_z := \Sigma ~\cap~ \ovB_V(z,2\de_0)$.
Therefore we can apply Claim \ref{lip_k_cover_thm_claim}
to $F$ and conclude that 
$\left|\left| F_{[q]} 
\right|\right|_{\Lip(\eta, \Sigma,W)} \leq \ep$.
Since $F := \psi - \vph$ this gives the estimate 
claimed in \eqref{lip_k_cover_thm_conc} and 
completes the proof of Theorem \ref{lip_k_cover_thm}.
\end{proof}

\bibliographystyle{plain}
\bibliography{HOLST_bibliography_database}
\vskip 4pt 
\noindent
University of Oxford, Radcliffe Observatory,
Andrew Wiles Building, Woodstock Rd, Oxford, 
OX2 6GG, UK.
\vskip 4pt
\noindent
TL: tlyons@maths.ox.ac.uk \\
\url{https://www.maths.ox.ac.uk/people/terry.lyons}
\vskip 4pt
\noindent
AM: andrew.mcleod@maths.ox.ac.uk \\
\url{https://www.maths.ox.ac.uk/people/andrew.mcleod}
\end{document}